\documentclass[letterpaper, 11pt, reqno]{amsart}
\usepackage[top = 1in, bottom = 1in, left=1in, right=1in]{geometry}
\usepackage{setspace} 
\usepackage{bm}
\usepackage{amsmath}
\usepackage{stmaryrd}
\usepackage[utf8]{inputenc}
\usepackage[T1]{fontenc}
\usepackage{lineno} 
\newcounter{propcounter}
\usepackage[inline]{enumitem}
\setlist[enumerate,1]{label={\upshape (\roman*)}}

\usepackage{amssymb,amsmath,amsthm,graphicx,xcolor,mathtools,mathrsfs}
\usepackage{comment}
\usepackage{hyperref}
\hypersetup{colorlinks, linkcolor={red!50!black}, citecolor={green!50!black}, urlcolor={blue!50!black}}

\newcommand{\cc}{\mathcal{C}}

\newcommand{\ex}{{\rm ex}}
\newcommand{\pal}{{\rm pal}}
\newcommand{\rev}{\mathrm{rev}}

\usepackage[nameinlink]{cleveref}
\theoremstyle{plain}
\newtheorem{theorem}{Theorem}[section]
\crefname{theorem}{Theorem}{Theorems}

\crefname{proposition}{Proposition}{Propositions}

\newtheorem{corollary}[theorem]{Corollary}
\crefname{corollary}{Corollary}{Corollaries}

\newtheorem{lemma}[theorem]{Lemma}
\crefname{lemma}{Lemma}{Lemmas}

\crefname{conjecture}{Conjecture}{Conjectures}

\newtheorem{problem}[theorem]{Problem}
\crefname{problem}{Problem}{Problem}

\newtheorem{claim}[theorem]{Claim}
\crefname{claim}{Claim}{Claims}

\newtheorem{observation}[theorem]{Observation}
\crefname{observation}{Observation}{Observations}

\crefname{setup}{Setup}{Setups}

\crefname{myth}{Myth}{Myths}

\crefname{fact}{Fact}{Facts}

\crefname{algorithm}{Algorithm}{Algorithms}

\newtheorem{remark}[theorem]{Remark}
\crefname{remark}{Remark}{Remarks}

\crefname{example}{Example}{Examples}

\theoremstyle{definition}
\newtheorem{definition}[theorem]{Definition}
\crefname{definition}{Definition}{Definitions}

\crefname{construction}{Construction}{Constructions}

\crefname{question}{Question}{Questions}

\numberwithin{equation}{section}

\crefformat{section}{\S#2#1#3}
\crefformat{subsection}{\S#2#1#3}
\crefformat{subsubsection}{\S#2#1#3}
\crefrangeformat{section}{\S\S#3#1#4 to~#5#2#6}
\crefmultiformat{section}{\S\S#2#1#3}{ and~#2#1#3}{, #2#1#3}{ and~#2#1#3}

\newcommand{\eps}{\varepsilon}
\allowdisplaybreaks

\begin{document}

\title{Uniform Tur\'an densities of $k$-uniform hypergraphs}

\author[H. Lin]{Hao Lin}
\address{Department of Information Security, Naval University of Engineering, Wuhan, China.}
\email{haolinz6@163.com}
\author[G. Sun]{Guowei Sun}
\address{School of Mathematics, Shandong University, Jinan, China.}
\email{1521264577@qq.com}
\author[G. Wang]{Guanghui Wang}
\author[W. Zhou]{Wenling Zhou}
\address{School of Mathematics, Shandong University, Jinan,  China.}
\email{\{\,ghwang\,|\,gracezhou\,\}@sdu.edu.cn}
\thanks{G. Wang is partially supported by the National Key R\&D Program of China (2020YFA0712400) and the Natural Science Foundation of China (12231018).
W. Zhou is partially supported by the China Postdoctoral Science Foundation (2024M761780), the Natural Science Foundation of China (12401457), Natural Science Foundation of Shandong Province (ZR2024QA067) and Young Talent of Lifting engineering for Science and Technology in Shandong, China (SDAST2025QTA074).}
\keywords{Hypergraph, uniform Tur\'an density, palette}

\begin{abstract}
For $k\ge 3$, the $(k-2)$-uniform Tur\'an density $\pi_{k-2}(F)$
of a $k$-graph $F$ is the supremum of $d$ for which there are
arbitrarily large $F$-free $k$-graphs that are uniformly $d$-dense
with respect to the $k$-vertex cliques of every $(k-2)$-graph on the
same vertex set. We develop a \emph{palette framework} for this density. For
every family $\mathcal F$ of $k$-graphs, we prove that
$\pi_{k-2}(\mathcal F)$ equals the corresponding palette Tur\'an density.
We further establish palette classification tools for the existence of $k$-graphs satisfying prescribed palette colorability constraints. Those together allow us to reduce exact density computations to a palette-homomorphism framework without relying on the hypergraph regularity method.

As applications, for all $k\ge 3$ and $r\ge 2$, we establish the following values
\[
\frac{r-1}{r},\quad 
\frac{(r-1)^2}{r^2},\quad 
\frac{r-1}{2r},\quad 
\frac{(k-1)^k}{k^k},\quad 
\frac{4(k-2)^{k-2}}{k^k},\quad 
\frac{4(k-2)^{k-2}}{3k^k}
\] as $(k-2)$-uniform Tur\'an densities of single
$k$-graphs. Finally, for every $k\ge3$, we show that there exist $k$-graphs
$F_1,F_2$ such that
\[
\pi_{k-2}(\{F_1,F_2\})<
\min\{\pi_{k-2}(F_1),\pi_{k-2}(F_2)\},
\]
which provides the first examples of \emph{non-principal} families for this density.
\end{abstract}

\maketitle
\thispagestyle{empty}
\vspace{-2.15em}

\section{Introduction}
For a positive integer $k$ and a finite set $V$, let $ \binom{V}{k} = \{ X \subseteq V : |X| = k \}$ denote the family of all $k$-sets of $V$. A {\it $k$-uniform hypergraph} (or {\it $k$-graph}) $H = (V, E)$ consists of a vertex set $V$ and an edge set $E \subseteq \binom{V}{k}$. Given a family $\mathcal F$ of $k$-graphs, a $k$-graph $H$ is \emph{$\mathcal F$-free} if it contains no member of $\mathcal F$ as a subgraph. The maximum number of edges in an $\mathcal F$-free $k$-graph on $n$ vertices is the \emph{Tur\'an number} of $\mathcal F$, denoted by $\ex(n,\mathcal F)$.
A standard averaging argument shows that the sequence $\ex(n, \mathcal{F})/\binom{n}{k}$ is non-increasing in $n$.
Therefore, one often focuses on the \emph{Tur\'an density} of $\mathcal{F}$, defined by
\[
\pi(\mathcal{F})=\lim_{n\to \infty}{\ex(n,\mathcal{F})}/{\binom{n}{k}}.
\]
If $\mathcal F=\{F\}$, we write $\ex(n,F)$ and $\pi(F)$ instead of $\ex(n,\mathcal F)$ and $\pi(\mathcal F)$.
Let $\Pi^{(k)}_{\infty}$ be the set of Tur\'an densities of arbitrary families of $k$-graphs, let $\Pi^{(k)}_{\rm fin}$ be the set of Tur\'an densities of finite families of $k$-graphs, and let $\Pi^{(k)}$ be the set of Tur\'an densities of single $k$-graphs. Clearly, for every $k\ge 2$, 
\begin{equation}\label{eq:classic-turan-density}
\Pi^{(k)} \subseteq \Pi^{(k)}_{\rm fin}\subseteq \Pi^{(k)}_{\infty}.  
\end{equation}

For $k=2$ (that is, for graphs), $\Pi^{(2)}$ is well understood.
The celebrated Erdős-Stone-Simonovits Theorem~\cite{E-Stone, E-Simonovits} determines the Tur\'an density of every graph in terms of its chromatic number, and consequently
\[
\Pi^{(2)}_{\infty}=\Pi^{(2)}_{\rm fin}=\Pi^{(2)} =\{(r-1)/r: r\in \mathbb N^+\}.
\] 
However, for $k\ge 3$, we do not have an analog to the Erdős-Stone-Simonovits Theorem, and much less is known about the Tur\'an densities of single $k$-graphs. Tur\'an in his original work~\cite{Tu-graph} asked for the Tur\'an number of complete $k$-graphs. Later, Erd\H{o}s offered a monetary prize for determining $\pi(K_r^{(k)})$ for a complete $k$-graph $K_r^{(k)}$ with $r> k\ge3$. To this day, no case of this problem with $r> k\ge 3$ has been resolved, even asymptotically, despite substantial effort.

Hypergraphs also exhibit phenomena absent in graphs. Mubayi and R\"odl~\cite{Mubayi-rodl-02} conjectured that there exist \emph{non-principal} families $\mathcal F$ for $k\ge 3$, that is, families $\mathcal F$ such that
\[
\pi(\mathcal F)<\min\{\pi(F):F\in\mathcal F\},
\]
and remarked that this should hold even when $|\mathcal F|=2$. 
Balogh~\cite{Balogh2002} confirmed this conjecture by constructing a finite non-principal family of $3$-graphs whose size is much larger than two. 
Later, Mubayi and Pikhurko~\cite{MubayiPikhurko2008} constructed a non-principal family of $k$-graphs of size two for every $k\ge 3$. 
These examples illustrate the difference between graphs and hypergraphs.

\subsection{Tur\'an problems in uniformly dense hypergraphs}

The difficulty of determining Tur\'an densities for hypergraphs has motivated the study of several variants. For $3$-graphs, Erd\H{o}s and S\'os~\cite{E-Sos} initiated the study of Tur\'an problems under a uniform density condition on large vertex subsets. For $d\in[0,1]$ and $\mu>0$, a $3$-graph $H=(V,E)$ is \emph{$(d,\mu,1)$-dense} if
\[
\left|\binom U3\cap E\right|\ge d\binom{|U|}{3}-\mu |V|^3
\]
for every $U\subseteq V$. The corresponding $1$-uniform Tur\'an density $\pi_1(F)$ of a given $k$-graph $F$ is the supremum of all $d$ such that, for every $\mu>0$ and every $n_0$, there exists an $F$-free $(d,\mu,1)$-dense $3$-graph with at least $n_0$ vertices.

With this notation, Erd\H{o}s and S\'os asked to determine $\pi_1(K^{(3)}_4)$ and $\pi_1(K^{(3)-}_4)$, where $K^{(3)-}_4$ is obtained from $K^{(3)}_4$ by removing one edge.
Recently, it was shown that $\pi_1(K^{(3)-}_4)=1/4$ by Glebov, Kr\'al' and Volec~\cite{k43minus-1}, and independently by Reiher, R\"odl and Schacht~\cite{k43minus-2}. 
The conjecture that $\pi_1(K_4^{(3)})=1/2$ remains open; R\"odl~\cite{k43-rodl} provided a randomized construction giving the lower bound in 1986. 
For further results on uniformly dense $3$-graphs, we refer the reader to Reiher's survey~\cite{reiher2020extremal}.

The study of Tur\'an problems in uniformly dense $k$-graphs has gained further momentum due to the work of Reiher, R\"odl and Schacht~\cite{RRS-Mantel,vanishing,k43minus-2}. In particular, they~\cite{RRS-Mantel} introduced a hierarchy of uniform density notions for $k$-graphs. We focus on the top non-trivial level.
Given $k\ge2$ and a $(k-2)$-graph $G^{(k-2)}$, let $\mathcal K_k(G^{(k-2)})$ be the family of $k$-subsets of $V(G^{(k-2)})$ spanning a copy of the complete $(k-2)$-graph $K_k^{(k-2)}$. When $k=3$, a $1$-graph on vertex set $U$ consists of the singletons of $U$, and hence $\mathcal{K}_3(G^{(1)})=\binom{U}{3}$.

\begin{definition}\label{def:k-2-dense}
Given integers $n\ge k\ge2$, let $d\in[0,1]$, $\mu>0$, and let $H=(V,E)$ be a $k$-graph with $|V|=n$. We say that $H$ is {\it $(d, \mu, k-2)$-dense} if
\begin{equation}\label{eq:j-dense}
\left|\mathcal{K}_k(G^{(k-2)})\cap E\right|\ge d\left|\mathcal{K}_k(G^{(k-2)})\right|-\mu n^k
\end{equation}
holds for all $(k-2)$-graphs $G^{(k-2)}$ with vertex set $V$.
\end{definition}

The corresponding \emph{$(k-2)$-uniform Tur\'an density} of a given $k$-graph $F$ is defined as
\begin{equation}\label{pi-k-2-definition}
\begin{split}
\pi_{k-2}(F) = \sup \{ d\in [0,1] & : \text{for\ every\ } \mu>0 \ \text{and\ } n_0\in \mathbb{N},\ \text{there\ exists\ an\ } F \text{-free} \\
		&\quad (d,\mu,k-2)\text{-dense}~ k \text{-graph~} H\ \text{with~} |V(H)|\geq n_0 \}.
	\end{split}
\end{equation}

Note that for $k=2$, $H$ is $(d,\mu,0)$-dense if $|E|\ge d\binom{|V|}{2}-\mu n^2$, since on any vertex set there are only two $0$-graphs: the one with empty edge set and the one with the empty set being an edge.  Therefore $\pi_0(F)=\pi(F)$ for every graph $F$.
In 2016, Reiher, R\"odl and Schacht~\cite{RRS-Mantel} generalized Mantel's theorem by showing that $\pi_{k-2}(F^{(k)})=2^{1-k}$, where $F^{(k)}$ is the unique $k$-graph on $(k+1)$ vertices with three edges. Moreover, they~\cite{vanishing} characterized all $k$-graphs $F$ with $\pi_{k-2}(F)=0$. As a consequence, they deduced a jump phenomenon: $\pi_{k-2}(F)$ is either $0$ or at least $k^{-k}$. 
Garbe, Kr\'al' and Lamaison~\cite{1/27} constructed $3$-graphs $F$ with $\pi_{1}(F)=1/27$. Subsequently, for all $k\ge 3$, the first author together with the latter two authors~\cite{LWZ-25} gave a regularity-based framework for studying $\pi_{k-2}(F)$ and constructed $k$-graphs $F$ with $\pi_{k-2}(F)=k^{-k}$.

Recently, there has been breakthrough progress on the $1$-uniform Tur\'an density of $3$-graphs
\cite{chen2022beyond,4/27,Ander,Ander-Wu,8/27,KKLT-25,King-SS-24,King-piga-SS-25}, yet results for general $k$-graphs
remain largely unknown. One reason why so many results recently have been proved about the $1$-uniform Tur\'an density of $3$-graphs is the development of a tool for its study, the so-called {\it palettes}.
For $3$-graphs, a {\it palette} $\mathscr{P}$ consists of a finite color set and a collection of admissible ordered triples of colors.
Roughly speaking, given a palette $\mathscr{P}$ with density $d$, randomly coloring pairs and keeping triples with admissible color patterns yields
$(d,\mu,1)$-dense $3$-graphs. This viewpoint was introduced by Reiher~\cite{reiher2020extremal} as a generalization of the construction of R\"odl~\cite{k43-rodl}.
As a breakthrough, 
Lamaison~\cite{Ander} recently proved that palettes determine the $1$-uniform Tur\'an density of every single $3$-graph; 
Kr\'al', Kučer\'ak, Lamaison and Tardos~\cite{KKLT-25} gave a general classification tool for palettes; 
King, Piga, Sales and Sch\"ulke~\cite{King-piga-SS-25} showed
that every real number that is the Lagrangian of a palette is the $1$-uniform Tur\'an density of a finite family of
$3$-graphs.

The starting point of this paper is the remark of Reiher, R\"odl and Schacht~\cite{vanishing}: for the $(k-2)$-uniform Tur\'an densities of $k$-graphs,
one can establish a theory that resembles to some extent the classical theory for graphs initiated by Tur\'an himself and developed further by Erd\H{o}s, Stone, Simonovits and many others. 
We develop such a theory through a $k$-uniform palette framework.

\subsection{Our results}

Analogous to the definition in~\eqref{pi-k-2-definition} for a single $k$-graph, we extend the notion of $(k-2)$-uniform Tur\'an density to families of $k$-graphs, and denote it by $\pi_{k-2}(\mathcal F)$.

Motivated by the role that palettes play in the study of $1$-uniform Tur\'an density of $3$-graphs, we first extend the ``palette'' method to arbitrary $k\ge 3$ and to arbitrary families of $k$-graphs.

\begin{definition}\label{def:k-palette}
For $k\ge 3$, a {\it $k$-palette} is a pair $\mathscr{P}=(\mathcal{C}, \mathcal T)$, where $\mathcal C$ is a finite set of colors and $\mathcal T\subseteq\mathcal C^k$ is a set of admissible ordered $k$-tuples. The density of $\mathscr{P}$ is defined by
\[
d(\mathscr P):={|\mathcal T|}/{|\mathcal C|^k}.
\] 
A $k$-graph $F$ is \emph{$\mathscr P$-colorable} if it admits a linear order $\prec$ on $V(F)$ and a coloring $\varphi: \binom{V(F)}{k-1} \to \mathcal{C}$ 
such that for every edge
$e=\{v_{i_1},v_{i_2},\dots,v_{i_k}\}\in E(F)$ with $v_{i_1}\prec v_{i_2}\prec\cdots\prec v_{i_k}$,
\[
\bigl(\varphi(e\setminus\{v_{i_1}\}),\varphi(e\setminus\{v_{i_2}\}),\dots,\varphi(e\setminus\{v_{i_k}\})\bigr)\in \mathcal T.
\]
Moreover, a family $\mathcal F$ of $k$-graphs is {\it $\mathscr{P}$-colorable} if there exists some $k$-graph $F\in \mathcal F$ such that $F$ is $\mathscr{P}$-colorable.
\end{definition}

The palette construction provides extremal examples of uniformly dense hypergraphs and therefore gives a natural lower bound for $\pi_{k-2}(\mathcal F)$. This motivates the following notion.

\begin{definition}
Given a family $\mathcal F$ of $k$-graphs, the {\it palette Tur\'an density} of $\mathcal F$ is defined by
\[
\pi^{\pal}_{k}(\mathcal F)= \sup \{d(\mathscr{P}): \mathscr{P}\text{ is a $k$-palette and every } F\in\mathcal{F}\text{ is not $\mathscr{P}$-colorable}\}.
\]
\end{definition}

Our first main theorem shows that the palette viewpoint exactly captures $(k-2)$-uniform Tur\'an density for arbitrary families of $k$-graphs.

\begin{theorem}\label{Thm:main theorem 1} 
Let $k\ge 3$ and $\mathcal F$ be a family of $k$-graphs. Then
\[
\pi_{k-2}(\mathcal F)=\pi^{\pal}_{k}(\mathcal F).
\]
\end{theorem}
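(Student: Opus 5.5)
We prove the two inequalities separately.

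\emph{Lower bound $\pi_{k-2}(\mathcal F)\ge\pi^{\pal}_k(\mathcal F)$.} Fix a $k$-palette $\mathscr P=(\mathcal C,\mathcal T)$ such that no $F\in\mathcal F$ is $\mathscr P$-colorable. For large $n$, take $V=[n]$ with its natural order and a uniformly random coloring $\varphi:\binom{V}{k-1}\to\mathcal C$. Let $H$ consist of those $k$-sets $e=\{v_1<\dots<v_k\}$ with $(\varphi(e\setminus\{v_1\}),\dots,\varphi(e\setminus\{v_k\}))\in\mathcal T$. Then $H$ is $\mathcal F$-free, because a copy of $F$ in $H$ inherits the order and the coloring, and so would witness that $F$ is $\mathscr P$-colorable.

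For density, fix a $(k-2)$-graph $G$ and consider $X=|\mathcal K_k(G)\cap E(H)|$. Each $k$-set lies in $H$ with probability $d(\mathscr P)$, since its $k$ faces of size $k-1$ are distinct and independently colored. Changing the color of one $(k-1)$-set changes $X$ by at most $n$. McDiarmid's inequality therefore gives
\[
\Pr\bigl[X<d|\mathcal K_k(G)|-\mu n^k\bigr]\le\exp\bigl(-c\mu^2n^{2k}/(n^{k-1}n^2)\bigr)=\exp\bigl(-c\mu^2n^{k-1}\bigr).
\]
There are only $2^{\binom{n}{k-2}}$ choices of $G$, and $n^{k-2}\ll n^{k-1}$, so a union bound shows that some $\varphi$ makes $H$ $(d(\mathscr P),\mu,k-2)$-dense. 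Taking the supremum over $\mathscr P$ gives the lower bound.

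\emph{Upper bound.} Suppose $\pi_{k-2}(\mathcal F)>d$ and $d>\pi^{\pal}_k(\mathcal F)$. Then for every $\mu$ there are arbitrarily large $\mathcal F$-free $(d,\mu,k-2)$-dense $k$-graphs $H$. We aim to extract from $H$ a palette $\mathscr P$ with $d(\mathscr P)>\pi^{\pal}_k(\mathcal F)$ that colors no member of $\mathcal F$, which is a contradiction. A palette of density at least $d-o(1)$ is enough, after fixing $d$ strictly between the two values.

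The route avoids regularity and mimics Lamaison's argument for $k=3$. By Ramsey-type or compactness reasoning, pass from $H$ to a large ordered vertex subset $v_1<\dots<v_N$. Then take a random ``product'' colouring in which each $(k-1)$-set $S$ is coloured by the isomorphism type of its link pattern with respect to a bounded number of reference vertices. The density condition applied to suitable random $(k-2)$-graphs $G$ translates into a density lower bound for the resulting palette: colour each $(k-2)$-set by random choices and average. Freeness translates as follows: if some $F\in\mathcal F$ were colorable by the extracted palette, then the colour classes would let us embed $F$ greedily in order into $H$, contradicting $\mathcal F$-freeness.

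Two points need care. First, because $\mathcal F$ may be infinite, the palette must be chosen uniformly and colorability checked per $F$; this is fine, since each $F$ is finite and only a bounded number of reference vertices is needed for it. Second, the embedding step requires that for every admissible tuple in $\mathcal T$ there are many vertices realising it. We plan to ensure this by discarding sparse tuples, which costs only $o(1)$ density.

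The main obstacle is exactly the upper-bound embedding. We must produce a finite colour set for which admissibility is both dense and \emph{robustly} realisable, so that any palette-coloured $F$ embeds. We would first try the iterated-link argument: define colours via $(k-1)$-set neighbourhoods in a random partition of $V$ into $|\mathcal C|$ parts. We would then use a supersaturation/averaging (Kővári–Sós–Turán type) counting to show that the $(k-2)$-uniform density forces each admissible pattern to appear with positive density. Finally we would embed $F$ vertex by vertex in the order $\prec$, maintaining a large common candidate set.
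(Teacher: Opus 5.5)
Your lower bound is correct and is essentially the random-palette argument the paper cites from \cite{LWZ-25}: random coloring of $(k-1)$-sets, a bounded-differences bound, and a union bound over the $2^{\binom{n}{k-2}}$ choices of $G$.

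\textbf{The upper bound is not established.} You describe a plan whose central step you leave open, and the proposed mechanism does not work as stated.
\begin{itemize}
\item \emph{Density.} The palette density $|\mathcal T|/|\mathcal C|^k$ weights all color $k$-tuples equally. To bound it below by $d-o(1)$, all $|\mathcal C|^k$ face-color patterns must occur with essentially equal frequency inside $\mathcal K_k(G)$ for suitable $(k-2)$-graphs $G$. In other words, the color classes of $(k-1)$-sets must be quasirandom relative to the $(k-2)$-level. Colors defined by link patterns in $H$ have no such property: classes can have very different sizes and be strongly correlated on the faces of a $k$-set. For example, if one color is used on only a negligible set, the robustly realized tuples can have density far below $d$. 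The hypothesis only controls $|E(H)\cap\mathcal K_k(G)|$ for $(k-2)$-graphs $G$, so it cannot select $k$-sets by the colors of their $(k-1)$-faces. Averaging over random $G$ just returns the global edge density.
\item \emph{Embedding.} Knowing that each admissible pattern is realized by many edges of $H$ does not let you realize all edges of a palette-colored $F$ at once, because those edges share $(k-1)$- and $(k-2)$-sets. A greedy embedding that maintains large common candidate sets needs a counting lemma, and this fails for arbitrary colorings.
\end{itemize}
These two properties are exactly what the regularity and embedding lemmas supply. The paper packages them as \cref{thm-turan-reduced}: it suffices to show that every $(d+\eps)$-dense $N$-reduced $k$-graph embeds $\mathcal F$, where the reduced graph's vertex classes come from the $(k-1)$-cells of a regular partition. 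Lamaison's $k=3$ argument also starts from the Reiher--R\"odl--Schacht reduced-hypergraph criterion, which is proved via regularity, so it is not a regularity-free template.

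\textbf{The infinite-family step is also missing.} A palette extracted from a single finite $H$, or a single reduced $k$-graph, can only be shown to color no member of $\mathcal F$ up to some size, since larger $F$ cannot embed at all. Saying that each $F$ needs only boundedly many reference vertices gives palettes that depend on $F$, not one palette avoiding all of $\mathcal F$.

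The paper instead builds, for each $n$, a palette $\mathscr P_n$ with these properties:
\begin{itemize}
\item its density is at least $\pi-2\eps/3$;
\item it colors no member of $\mathcal F_n$;
\item its color set has size $s=s(\eps,k)$, independent of $n$.
\end{itemize}
It then pigeonholes over the finitely many palettes on $[s]$ to get one palette equal to $\mathscr P_{n_j}$ for infinitely many $n_j$.

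The uniform bound on $s$ is the density-preserving contraction lemma, \cref{lem:key lemma for palette theorem}. It is proved by $r$ rounds of Hoeffding sampling of $t$ vertices per class, with \cref{lem: reduced-ramsey lem} shrinking the index set so that all sampled sub-constituents stay dense. Next, Ramsey's theorem with $m=R_k(n,2^{s^k})$ gives all constituents the same label, so the restricted reduced graph is literally $\mathcal A(\mathscr P_n)$. Finally, \cref{lm:colorable equals embedding} turns non-embedding into non-colorability.

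None of these ingredients appears in your proposal: the reduction to reduced $k$-graphs, the contraction to bounded classes, and the homogenization plus pigeonhole. So the inequality $\pi_{k-2}(\mathcal F)\le\pi^{\pal}_k(\mathcal F)$ remains unproved.
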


When $\mathcal F=\{F\}$ is a singleton, \cref{Thm:main theorem 1} confirms a conjecture from~\cite[Conjecture~7.6]{LWZ-25}. In particular, when $k=3$, \cref{Thm:main theorem 1} generalizes a breakthrough result of Lamaison~\cite{Ander}, who proved that palettes determine the $1$-uniform Tur\'an density of single $3$-graphs.

Similarly to the original Tur\'an density, let $\Pi^{(k)}_{{\rm u},\infty}$, $\Pi^{(k)}_{{\rm u},{\rm fin}}$ and $\Pi^{(k)}_{\rm u}$ denote the sets of all possible $(k-2)$-uniform Tur\'an densities of arbitrary families, finite families, and single $k$-graphs, respectively. Then
\begin{equation}\label{eq:k-2-turan-density}
\Pi^{(k)}_{\rm u}\subseteq \Pi^{(k)}_{{\rm u},{\rm fin}}\subseteq \Pi^{(k)}_{{\rm u},\infty}.
\end{equation}

For $k=2$, these sets coincide. For $k\ge3$, much less is known.
A natural question is whether, for a given $\alpha\in[0,1]$, one can decide whether $\alpha \in \Pi^{(k)}_{\rm u}$ (or $\alpha \in \Pi^{(k)}_{\rm u,fin}$) by means of \cref{Thm:main theorem 1}. For $k=3$, Kr\'al', Ku\v{c}er\'ak, Lamaison and Tardos~\cite{KKLT-25}
gave a necessary and sufficient criterion in terms of homomorphisms between palettes, for verifying the
existence of a $3$-graph $F$ with $\pi_{1}(F)=\alpha$. Our next result establishes an analogous criterion for general $k$. Before stating it, we introduce the relevant notions.

\begin{definition}\label{def:k-palette-relation}
Given a $k$-palette $\mathscr P=(\mathcal C,\mathcal T)$, its \emph{reverse} is
\[
{\rm rev}(\mathscr P):=(\mathcal C,{\rm rev}(\mathcal T)),
\qquad
{\rm rev}(\mathcal T):=\{(c_k,c_{k-1},\dots,c_1):(c_1,\dots,c_k)\in\mathcal T\}.
\]
A \emph{homomorphism} from $\mathscr P=(\mathcal C,\mathcal T)$ to $\mathscr P'=(\mathcal C',\mathcal T')$ is a map $f:\mathcal C\to\mathcal C'$ such that for every
$(c_1,c_2,\dots,c_k)\in\mathcal{T}$, we have $(f(c_1),f(c_2),\dots,f(c_k))\in\mathcal{T}'$.
\end{definition}

The following two basic observations are immediate:
\begin{itemize}
\item A $k$-graph is $\mathscr P$-colorable if and only if it is ${\rm rev}(\mathscr P)$-colorable, by reversing the witnessing vertex order.
\item\label{item:homomorphism} If there is a homomorphism $\mathscr P\to\mathscr P'$, then every $\mathscr P$-colorable $k$-graph is also $\mathscr P'$-colorable.
\end{itemize}

Using these notions, we obtain the following characterization.

\begin{theorem}\label{thm: classification for single}
For $k\ge 3$, let ${\mathscr P}$ and ${\mathscr P}_0$ be two $k$-palettes. There exists a $k$-graph
$F$ that is ${\mathscr P}$-colorable but not ${\mathscr P}_0$-colorable if and only if there is no homomorphism from $\mathscr P$ to $\mathscr P_0$ and no homomorphism from $\mathscr P$ to ${\rm rev}(\mathscr P_0)$.
\end{theorem}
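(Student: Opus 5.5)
The ``only if'' direction follows from the two observations before the statement. Suppose $f$ is a homomorphism $\mathscr P\to\mathscr P_0$. Then every $\mathscr P$-colorable $F$ is $\mathscr P_0$-colorable, by composing the witnessing coloring with $f$. If instead $f$ is a homomorphism $\mathscr P\to\rev(\mathscr P_0)$, then $F$ is $\rev(\mathscr P_0)$-colorable, hence $\mathscr P_0$-colorable. So all the work is in the ``if'' direction. Assuming no homomorphism to $\mathscr P_0$ or to $\rev(\mathscr P_0)$, I will build a single $\mathscr P$-colorable $k$-graph $F$ that is rigid enough that any $\mathscr P_0$-coloring of it yields one of these homomorphisms.

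\emph{Template.} Let $\mathscr P=(\cc,\mathcal T)$. I first build a finite ordered ``template'' $A$. Its vertex set is linearly ordered, its $(k-1)$-sets are colored by $\varphi_A$ with colors in $\cc$, and for every $(c_1,\dots,c_k)\in\mathcal T$ there is a $k$-set $v_1<\dots<v_k$ with $\varphi_A(\{v_1,\dots,v_k\}\setminus\{v_i\})=c_i$ for all $i$. One way is to take a disjoint union of $|\mathcal T|$ such $k$-sets placed consecutively in the order. Such an $A$ is a $\cc$-edge-colored ordered $(k-1)$-graph. I view $F_A$, the $k$-graph of those $k$-sets whose ordered color pattern lies in $\mathcal T$, as $\mathscr P$-colored by construction.

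\emph{Ramsey step.} Next I invoke the Nešetřil--Rödl theorem that finite ordered hypergraphs with edge colors (here: ordered $(k-1)$-graphs whose edges carry colors from $\cc$) form a Ramsey class. I apply it once for the $(k-1)$-edge objects, once for each color class, and once for ordered pairs of vertices, iterating the Ramsey property. This gives a large ordered $\cc$-colored $(k-1)$-graph $B$ with the following property. For every coloring of its colored $(k-1)$-edges by $|\mathcal C_0|$ colors, and every $2$-coloring of its vertex pairs, there is an induced copy of $A$ in $B$ on which two things hold. First, the auxiliary color of an edge depends only on its $\cc$-color. Second, all pairs receive the same color. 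The $k$-graph $F$ consists of the $k$-sets of $B$ (all of whose $(k-1)$-subsets are edges of $B$) whose ordered color pattern lies in $\mathcal T$. This $F$ is $\mathscr P$-colorable via the order of $B$ and its edge coloring. To guarantee that $\varphi$ is defined on all $(k-1)$-sets, I take $B$ complete as a $(k-1)$-graph, or extend $\varphi$ arbitrarily on non-edges; this is harmless because only edges of $F$ are constrained.

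\emph{Extraction.} Suppose $F$ had a $\mathscr P_0$-coloring given by an order $\sigma$ and a coloring $\psi$. Color each edge of $B$ by its $\psi$-value. Color each vertex pair of $B$ by whether $\sigma$ agrees with the order of $B$ on it. The Ramsey property yields a copy of $A$ on which $\sigma$ is either identical to the order of $B$ or its reverse, and on which $\psi=f\circ\varphi$ for some map $f:\cc\to\mathcal C_0$. Now take the $k$-set realizing $(c_1,\dots,c_k)\in\mathcal T$ in this copy; it is an edge of $F$.
\begin{itemize}
\item If $\sigma$ agrees with the order of $B$, the $\mathscr P_0$-condition on this edge gives $(f(c_1),\dots,f(c_k))\in\mathcal T_0$, so $f$ is a homomorphism $\mathscr P\to\mathscr P_0$.
\item If $\sigma$ is the reverse order, the condition gives $(f(c_k),\dots,f(c_1))\in\mathcal T_0$, so $f$ is a homomorphism $\mathscr P\to\rev(\mathscr P_0)$.
\end{itemize}
Either case contradicts the assumption, so $F$ is not $\mathscr P_0$-colorable.

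\emph{Main obstacle.} The hard step is getting the Ramsey statement in exactly the needed form. The $\psi$-coloring of $(k-1)$-edges and the orientation coloring of pairs must be made simultaneously canonical on one induced copy of $A$. Two pair types cannot be handled by a simple product argument. I would first try iterating the ordered Ramsey class property: handle the pair coloring with the ordered Ramsey property for $2$-element substructures, then refine inside the resulting copy for the edges of each color $c\in\cc$ in turn. The subtlety is that each refinement must preserve induced-ness, so that no spurious colored $(k-1)$-sets enter. If this is awkward, I would fall back on a partite construction that builds $B$ directly.
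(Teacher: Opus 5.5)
Your route is genuinely different from the paper's, and it is sound for $k\ge 4$. For $k=3$, however, the Ramsey step fails as stated. The Ne\v{s}et\v{r}il--R\"odl theorem only makes copies of one \emph{fixed} substructure $C$ monochromatic. For $k\ge4$ every $2$-element substructure of an ordered $\mathcal C$-colored $(k-1)$-graph is the same bare ordered pair, so one application gives a copy of $A$ with constant orientation.

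For $k=3$ the vertex pairs are exactly the $(k-1)$-sets, and each carries its $\mathcal C$-color (or is uncolored). So $2$-element substructures fall into several isomorphism types. Iterating the Ramsey property then only yields an orientation $o(c)\in\{+,-\}$ that is constant on each color class. Your stronger property (``all pairs receive the same color'') is in fact false as soon as $A$ has pairs of two colors $c\neq c'$: color all $c$-pairs of $B$ red and all $c'$-pairs blue. With type-dependent orientations, $\sigma$ can order a realizing triple $v_1<v_2<v_3$ as $v_2,v_1,v_3$. The extraction then only gives $(f(c_2),f(c_1),f(c_3))\in\mathcal T_0$, which is not the condition needed for $f$ to be a homomorphism into $\mathscr P_0$ or $\mathrm{rev}(\mathscr P_0)$.

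The repair is cheap, because the orientation coloring is induced by a linear order: enlarge $A$ by gadgets that force the $o(c)$ to agree. With $B$ complete, add for each $c'\in\mathcal C$ a triple $u<v<w$ with $uv,vw$ of a fixed color $c_0$ and $uw$ of color $c'$. Transitivity of $\sigma$ then gives $o(c')=o(c_0)$. In the non-complete version, take $uv,vw$ uncolored instead. After this change your extraction goes through verbatim. Your concern about preserving induced-ness is not an obstacle: iterating $B_i\to(B_{i-1})^{C_i}_{r_i}$ works, since embeddings compose and monochromaticity on copies of $C$ is inherited by substructures.

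Compared with the paper: it proves the ``if'' direction in two stages. First comes an ordered statement, \cref{lem:single order}. It uses a random $\mathcal C$-coloring of the complete $(k-1)$-graph, the regularity lemma for the $\mathcal C\times\mathcal C_0$-colored $(k-1)$-graph, majority maps $h_I$ on polyads, Ramsey over index sets, and the embedding lemma. This produces ordered $H^+$ and $H^-$. Then \cref{lm4.8} removes the order using a random $\mathscr P$-coloring, a linear packing of $m$-sets, Erd\H{o}s--Szekeres inside each, independence, and a union bound over all $N!$ orders.

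In your argument, the $\psi$-colorings of edges do the job of the majority maps plus the index-Ramsey step. The pair-orientation coloring replaces Erd\H{o}s--Szekeres and the union bound over orders. Both stages thus collapse into one structural Ramsey application, with no regularity and no probability. The cost is reliance on the Ne\v{s}et\v{r}il--R\"odl theorem for ordered colored hypergraphs, itself a substantial partite-construction argument. The paper's split into an ordered lemma and a de-ordering lemma is also what it later extends to several palettes via symmetrization and the Fishburn--Graham lemma.
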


In addition, we prove a more general multi-palette classification theorem (see~\cref{thm: classification for multicase}) in \cref{sec: proof of multi-palette classification}, which gives a necessary and sufficient condition for the existence of a $k$-graph $F$ that is colorable by one set of $k$-palettes but not by another.

As mentioned above, previous works show that $0$, $k^{-k}$, and $2^{1-k}$ belong to $\Pi^{(k)}_{\rm u}$. Combining~\cref{Thm:main theorem 1} with these classification theorems, we determine several further values in $\Pi^{(k)}_{\rm u}$.

\begin{theorem}\label{thm:values of k-2-pi} 
For all integers $k\geq 3$ and $r\geq 2$, we have 
\begin{equation*}
\left\{ \ \frac{r-1}{r},\ \frac{(r-1)^2}{r^2},\ \frac{r-1}{2r}, \ \frac{(k-1)^k}{k^k},\ \frac{4(k-2)^{k-2}}{k^k},\ \frac{4(k-2)^{k-2}}{3k^k}\right\}\subseteq \Pi^{(k)}_{\rm u}.
\end{equation*}
\end{theorem}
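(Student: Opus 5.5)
By \cref{Thm:main theorem 1}, each claimed value $\alpha$ lies in $\Pi^{(k)}_{\rm u}$ as soon as we find a single $k$-graph $F$ with $\pi^{\pal}_k(F)=\alpha$. For each $\alpha$ in the list, the plan has three steps.

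\begin{enumerate}
\item Exhibit an explicit ``extremal'' palette $\mathscr P_0$ with $d(\mathscr P_0)=\alpha$, or a family of palettes whose densities converge to $\alpha$ from below.
\item Identify the finite list of palettes $\mathscr P_1,\dots,\mathscr P_m$ such that every palette of density larger than $\alpha$ admits a homomorphism from one of them, possibly after reversal.
\item Use \cref{thm: classification for single}, or rather its multi-palette version \cref{thm: classification for multicase}, to produce a $k$-graph $F$ that is $\mathscr P_i$-colorable for every $i$ but not $\mathscr P_0$-colorable.
\end{enumerate}

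Then $F$-freeness is preserved by the $\mathscr P_0$ construction, which gives $\pi^{\pal}_k(F)\ge\alpha$. Conversely, every palette of density $>\alpha$ colors $F$, which gives $\pi^{\pal}_k(F)\le\alpha$.

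The concrete palettes I would use are as follows.
\begin{itemize}
\item For $(r-1)/r$: colors $[r]$, with admissible tuples those whose first two coordinates differ, mimicking a random $r$-coloring of $(k-1)$-sets in which an edge is present iff two specific faces get different colors. The density is $(r-1)/r$.
\item For $(r-1)^2/r^2$: the analogous palette with two independent ``different'' conditions.
\item For $(r-1)/(2r)$: a product of the $(r-1)/r$ palette with an orientation-type palette of density $1/2$.
\item For $(k-1)^k/k^k$, $4(k-2)^{k-2}/k^k$ and $4(k-2)^{k-2}/3k^k$: palettes whose densities arise as maxima of polynomial ``Lagrangian-type'' expressions. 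For example, $(k-1)^k/k^k=\max_x(1-x)^k$-type optimizations over palettes whose colors are weighted by a probability vector. Here I would take the palettes from \cite{LWZ-25}, whose extremal cases generalize $k^{-k}$, and blow them up.
\end{itemize}

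The upper bound is the heart of each case. I would argue it via a ``supersaturation over palettes'' lemma: any palette of density $>\alpha$ contains a sub-structure (a color class or a set of colors together with the tuples among them) that receives a homomorphism from one of the finitely many minimal palettes $\mathscr P_i$. This is a Turán-type statement about $k$-ary relations. For $(r-1)/r$ it reduces to a Ramsey/Turán argument on the auxiliary graph on colors $c\sim c'$ defined by whether tuples $(c,c',\dots)$ exist: density $>(r-1)/r$ forces a ``clique'' of size $r+1$, and then the $\mathscr P_i$ are the $(r+1)$-color version. The other values are handled similarly using AM–GM in the optimization.

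Once the $\mathscr P_i$ are fixed, the existence of $F$ follows by verifying the homomorphism condition in \cref{thm: classification for multicase}: no $\mathscr P_i$ maps into $\mathscr P_0$ or $\rev(\mathscr P_0)$, which reduces to a finite combinatorial check such as a pigeonhole argument on $r+1$ colors into $r$.

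I expect the main obstacle to be the upper-bound step for the three $k$-dependent values. Showing that every palette of density exceeding $4(k-2)^{k-2}/k^k$ (respectively its third) admits a homomorphism from one of a finite list of obstruction palettes requires a genuine stability/optimization argument over arbitrary $k$-ary relations. I would first attempt to symmetrize: restrict to tuples with a fixed pattern of equal and unequal coordinates, average over a random weighting of colors, and reduce to maximizing a multilinear polynomial. The constants $(k-1)^k/k^k$ and $4(k-2)^{k-2}/k^k$ appear as the maxima of $x^{k-1}(1-x)$-type and $x^2y^{k-2}$-type expressions on the simplex.
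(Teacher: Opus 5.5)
Your outer framework is the paper's own: its certificate lemma combines an extremal $\mathscr P_0$ of density $\alpha$, obstruction palettes that hit every denser palette up to reversal, and the palette density theorem. There are, however, two genuine gaps.

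\textbf{The multi-palette criterion is misstated.} The theorem does not ask that no $\mathscr P_i$ map to $\mathscr P_0$ or $\mathrm{rev}(\mathscr P_0)$. It asks this of the mixed products $\mathscr P_j\times\prod_{i\ne j}\mathscr P_i^{\mathcal S}$. Projection is a homomorphism from the mixed product onto $\mathscr P_j$, so your condition is necessary but not sufficient once $t\ge 2$: the mixed product is more constrained and can map into $\mathscr P_0$ even when $\mathscr P_j$ does not. This is exactly the situation for $4(k-2)^{k-2}/k^k$ and $4(k-2)^{k-2}/(3k^k)$, which need $k-2$ and $k-1$ obstructions respectively.

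The missing idea is to design the obstructions so that each one maps into the symmetrization of every other. This works because in $\mathscr P^{\mathcal S}$ a color can be placed, with a fixed superscript, in any prescribed coordinate: for fixed $p,q$ and $\sigma\in\mathbb S_{k-1}$ there is a unique $\tau\in\mathbb S_k$ with $\tau(p)=q$ and $\partial_p\tau=\sigma$. In addition, all non-special gadget colors must be fresh. Then $\mathscr P_j$ maps into its own mixed product, the two are homomorphically equivalent, and only then does the check reduce to $\mathscr P_j\not\to\mathscr P_0,\mathrm{rev}(\mathscr P_0)$.

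\textbf{The case-specific arguments, which are the actual proof, are missing or do not work as stated.}

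For $(r-1)/r$, a clique in a symmetric auxiliary graph is not enough: a cyclic triangle without loops receives no homomorphism from the $T_3$-gadget. You need the directed Tur\'an theorem for transitive tournaments (Brown--Harary) on the digraph of (first, second) coordinates, with loops handled separately. The obstruction must also avoid mapping into $\mathrm{rev}(\mathscr P_0)$, which a first-pair gadget alone does not; the paper glues a first-pair and a last-pair $T_{r+1}$-gadget on disjoint colors.

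For $(r-1)^2/r^2$ with $k=3$, the two constrained pairs overlap, so the bound $|\mathcal T|\le|E(L)||E(R)||\mathcal C|^{k-4}$ is unavailable. The paper proves $k\ge4$ and cites a separate result for $k=3$.

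For $(r-1)/(2r)$, a product palette is not automatically extremal; for instance, the product with $([2],\{x_1=1\})$ is homomorphically equivalent to the $(r-1)/r$ palette. The paper instead uses $([r],\{x_1<x_k\})$ and the path gadget $\{(c_i,c^i_1,\dots,c^i_{k-2},c_{i+1}):i\in[r]\}$. It then bounds the first-to-last-coordinate digraph by layering colors according to the longest walk ending at them.

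For the $k$-dependent values, blowing up palettes changes nothing, since blow-ups are homomorphically equivalent. Also, $(k-1)^k/k^k$ is not the maximum of $(1-x)^k$ or of $x^{k-1}(1-x)$. The tool you are missing is the coordinate support sets $C_j$, combined with gadgets in which one special color occupies prescribed coordinates of different tuples and every other color is fresh. The absence of a homomorphism from such a gadget forces the corresponding supports to be disjoint.
\begin{itemize}
\item For $(k-1)^k/k^k$, no color lies in all $k$ supports, so $\sum_j|C_j|\le(k-1)|\mathcal C|$, and AM--GM finishes.
\item For $4(k-2)^{k-2}/k^k$, $C_1\cup C_k$ is disjoint from the middle supports, which gives the bound $x^2(1-x)^{k-2}$.
\item For the last value, an extra two-special-color path gadget forbids tuples with both ends in $C_1\cap C_k$, which gives $(xy+yz+zx)(1-x-y-z)^{k-2}$.
\end{itemize}
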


Besides identifying further elements of $\Pi^{(k)}_{\rm u}$, it is also natural to ask which of the inclusions in~\eqref{eq:k-2-turan-density} are strict. As noted above, we have $\Pi^{(2)}_{\rm u}=\Pi^{(2)}_{\rm u,fin}$. For $k\ge 3$, however, the non-principal phenomena suggests that the first inclusion in~\eqref{eq:k-2-turan-density} may be strict. This motivates us to investigate whether an analogous
non-principal phenomenon occurs for the $(k-2)$-uniform Tur\'an density.

Our final main result establishes the existence of non-principal families for $(k-2)$-uniform Tur\'an density, in the strongest possible form, for families of size two.

\begin{theorem}\label{thm:non-principal family}
For every $k\ge 3$, there exist two $k$-graphs $F_1$ and $F_2$ such that
\[
\pi_{k-2}(\{F_1,F_2\})<\min\{\pi_{k-2}(F_1),\ \pi_{k-2}(F_2)\}.
\]
\end{theorem}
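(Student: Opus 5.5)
By \cref{Thm:main theorem 1}, the statement reduces to a problem about palettes:
\[
\pi_{k-2}(\{F_1,F_2\})=\pi^{\pal}_k(\{F_1,F_2\}),\qquad \pi_{k-2}(F_i)=\pi^{\pal}_k(F_i).
\]
So it suffices to find $k$-graphs $F_1,F_2$ and a number $\alpha$ with three properties. First, some palette $\mathscr P_1$ that does not color $F_1$ has $d(\mathscr P_1)>\alpha$. Second, some palette $\mathscr P_2$ that does not color $F_2$ has $d(\mathscr P_2)>\alpha$. Third, every palette colouring neither $F_1$ nor $F_2$ has density at most $\alpha$. The intended shape mimics Mubayi--Pikhurko. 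Take two ``incomparable'' extremal palettes $\mathscr P_1,\mathscr P_2$ of equal density $\beta$. Choose $F_1$ to be $\mathscr P_2$-colorable but not $\mathscr P_1$-colorable, and $F_2$ the reverse. Then show that avoiding both forces density at most $\alpha<\beta$.

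\emph{Step 1: choosing palettes.} I would use small palettes whose densities are among the values in \cref{thm:values of k-2-pi}, so that upper-bound arguments for single graphs can be reused. A natural candidate for $\mathscr P_1$ is the ``$r$-partite'' palette of density $(r-1)/r$: colors $[r]$, with the tuple $(c_1,\dots,c_k)$ admissible iff $c_1\ne c_2$. For $\mathscr P_2$ I would take a genuinely different, non-homomorphic structure of the same density, for example the one with $c_{k-1}\neq c_k$, or a $(k-1)^k/k^k$-type palette paired with a matching one. I would then verify with \cref{thm: classification for single} that there is no homomorphism $\mathscr P_1\to\mathscr P_2$ or $\mathscr P_1\to\rev(\mathscr P_2)$, and none in the opposite direction. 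That theorem then produces $F_1$ (which is $\mathscr P_2$-colorable but not $\mathscr P_1$-colorable) and $F_2$ (which is $\mathscr P_1$-colorable but not $\mathscr P_2$-colorable). This yields $\pi_{k-2}(F_1)\ge d(\mathscr P_2)$ and $\pi_{k-2}(F_2)\ge d(\mathscr P_1)$.

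\emph{Step 2: strengthening the choice.} The multi-palette version \cref{thm: classification for multicase} gives more freedom. I would choose $F_1$ colorable by a whole list of palettes $\mathscr Q_1,\dots,\mathscr Q_m$ but not by $\mathscr P_2$, and symmetrically for $F_2$. Each $\mathscr Q_j$ should be a ``dense candidate'' palette. A palette avoiding both $F_1$ and $F_2$ must then admit no homomorphism from any $\mathscr Q_j$ that $F_1$ is colorable by, and similarly for $F_2$. The condition to check is that this is impossible for every palette of density above $\alpha$.

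\emph{Step 3: the upper bound, which is the main obstacle.} Here I must show that any palette $\mathscr P$ of density greater than $\alpha$ colors $F_1$ or $F_2$. My first attempt would be a supersaturation and cleaning argument. Pass to a sub-palette in which every color has large ``degree'' in $\mathcal T$. Then use a counting or Lagrangian-type bound, in the style of the proofs of the single-graph values, to show the following. If $\mathscr P$ admits no homomorphic image of $\mathscr P_1$'s key substructure (say, a color $c$ with $(c,c',\dots)\in\mathcal T$ patterns forming the $r$-partite configuration), its density is at most the second value $\alpha$. Examples are $(r-1)^2/r^2$ versus $(r-1)/r$, or $(r-1)/(2r)$ versus $(r-1)/r$.

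Concretely, I would take $\beta=(r-1)/r$ and $\alpha=(r-1)^2/r^2$. The computation I would attempt is to show that a palette with density above $(r-1)^2/r^2$ contains a homomorphic copy of either the ``first-coordinate'' $r$-partite pattern or its ``last-coordinate'' analogue. It seems plausible that being dense in both coordinates is forced, because a product-like palette requiring both achieves only $((r-1)/r)^2$. Making this dichotomy rigorous, and handling the reversal symmetry, is the step I expect to be hardest.
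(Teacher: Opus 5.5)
Your reduction through \cref{Thm:main theorem 1} is fine. The concrete plan, however, collapses under reversal symmetry, and Step~3, which carries all the content, is left open.

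\textbf{The reversal problem.} The palette $([r],\{c_{k-1}\neq c_k\})$ is exactly $\mathrm{rev}$ of $([r],\{c_1\neq c_2\})$. A $k$-graph is $\mathscr P$-colorable iff it is $\mathrm{rev}(\mathscr P)$-colorable, so no $F_1$ can be colorable by one of these palettes and not the other. In \cref{thm: classification for single} this shows up as the identity being a homomorphism to the reverse.

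The same thing defeats Step~3. The first-coordinate tournament gadget (tuples $(p_a,p_b,\dots)$ for $a<b$) and its last-coordinate analogue are reverses of each other up to relabelling. So every $k$-graph colorable by one is colorable by the other. Suppose the dichotomy you aim for holds; for $k\ge 4$ it is exactly the argument in case~(2) of \cref{thm:values of k-2-pi}. Then every palette of density above $(r-1)^2/r^2$ colors $F_1$ \emph{and} $F_2$. Hence $\pi_{k-2}(F_1),\pi_{k-2}(F_2)\le (r-1)^2/r^2=\alpha$, and no gap appears.

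\textbf{The role inversion.} In Step~1 you make $F_i$ colorable by the dense extremal palettes. That gives no upper-bound leverage, because a palette of density just above $\alpha$ need not receive a homomorphism from a palette of density $\beta$. For instance, $([r],\{c_1\neq c_2\})$ maps neither into the palette on $[r+1]$ requiring $c_1\neq c_2$ and $c_{k-1}\neq c_k$, nor into its reverse, although that palette has density $r^2/(r+1)^2>\alpha$. Upper bounds must come from small gadget palettes that color $F_i$; the dense palettes serve only as non-colorability witnesses in \cref{lem:palette-certificate}. Step~2 points this way but never specifies the gadgets.

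\textbf{The missing idea.} Attach the two forcing conditions to \emph{disjoint} coordinate sets, each invariant under $i\mapsto k+1-i$. Then reversal cannot trade one condition for the other, while the two density constraints multiply.

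For $k\ge 4$ the paper uses two gadgets. The first has tuples $\{(1,\dots,k),(k,\dots,2k-1)\}$; it maps into $\mathscr P$ as soon as some color occurs both among first coordinates ($A_1$) and among last coordinates ($A_k$) of tuples of $\mathscr P$. The second has tuples $\{(1,\dots,k),(k+1,k-1,k+2,\dots,2k-1)\}$; it maps in as soon as $A_2\cap A_{k-1}\neq\emptyset$.

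The witnesses are the density-$1/4$ palettes on $[2]$ that fix coordinate $1$ to $1$ and coordinate $k$ to $2$, respectively coordinate $2$ to $1$ and coordinate $k-1$ to $2$. With AM--GM and \cref{lem:palette-certificate} these give $\pi_{k-2}(F_1)=\pi_{k-2}(F_2)=1/4$.

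For the family, $d(\mathscr P)\le |A_1||A_2||A_{k-1}||A_k|/|\mathcal C|^4$. So density above $1/16$ forces $|A_1||A_k|$ or $|A_2||A_{k-1}|$ to exceed $|\mathcal C|^2/4$, which gives $\pi_{k-2}(\{F_1,F_2\})\le 1/16$ (in fact with equality).

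For $k=3$ these coordinate pairs degenerate. The paper instead uses $\{(1,2,3),(2,4,5)\}$, which maps into $\mathscr P$ or $\mathrm{rev}(\mathscr P)$ as soon as $A_2$ meets $A_1\cup A_3$, and $\{(1,2,3),(3,4,5)\}$, which needs $A_1\cap A_3\neq\emptyset$. The resulting values are $4/27$, $1/4$ and $1/27$.
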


It is worth noting that, for the classical Tur\'an density, Balogh~\cite{Balogh2002} and Mubayi--Pikhurko~\cite{MubayiPikhurko2008}
proved the existence of non-principal $k$-graph families by establishing gaps between upper and lower bounds.
Unlike in~\cite{Balogh2002,MubayiPikhurko2008}, we prove~\cref{thm:non-principal family} by determining the exact values of $\pi_{k-2}(F_1)$, $\pi_{k-2}(F_2)$, and $\pi_{k-2}(\{F_1,F_2\})$.

\subsection{Notation}
For a positive integer $\ell$, let $[\ell]:=\{1,\dots,\ell\}$. For integers $a< b$, write $[a,b]:=\{a,a+1,\dots,b\}$. Given $k>i\ge1$ and an $i$-graph $G^{(i)}$, let $\mathcal K_k(G^{(i)})$ denote the family of $k$-subsets of $V(G^{(i)})$ that span a copy of $K_k^{(i)}$ in $G^{(i)}$. Given a $k$-graph $H$, let
\[
\partial H:=\{S\in\binom{V(H)}{k-1}: S\subset e\text{ for some }e\in E(H)\}
\]
be the \emph{shadow} of $H$. For real numbers $x,y,z$, we write $x=y\pm z$ to mean $y-z\le x\le y+z$.

Given distinct positive integers $i_1,\dots,i_\ell$, we write $\llbracket i_1,i_2,\dots,i_\ell\rrbracket$
for the set $\{i_1,i_2,\dots,i_\ell\}$ with the natural order $i_1<i_2<\cdots<i_\ell$. Let $R_k(n,r)$ be the minimum integer $N$ such that every $r$-edge-coloring of the complete $k$-graph on $N$ vertices contains a monochromatic complete subgraph on $n$ vertices. The hypergraph Ramsey theorem implies the existence of $R_k(n,r)$.

Some $k$-graphs considered in this paper are equipped with a linear order on their vertex set; we call them \emph{ordered $k$-graphs}. We explicitly use the adjective ``ordered'' whenever the order is part of the structure. If $H$ is an ordered $k$-graph, we write $\prec_H$ for its vertex order, omitting the subscript when clear from the context. Moreover, we write $\widetilde{H}$
for the unordered $k$-graph obtained from $H$ by forgetting its
linear order.  
If $H$ is an unordered $k$-graph and $\prec$ is a linear order on $V(H)$, then $H^\prec$ denotes the resulting ordered $k$-graph. Conversely, if $H^\prec$ is an ordered $k$-graph, then $H$ denotes the underlying unordered $k$-graph.

\subsection{Organization}
The paper is organized as follows.  In \cref{sec:proof of palette density theorem}, we prove \cref{Thm:main theorem 1}, reducing $(k-2)$-uniform Tur\'an density to palette Tur\'an density via reduced hypergraphs and a density-preserving contraction lemma.
In \cref{sec-regular-method}, we state the basic definitions and results from the hypergraph regularity method used later. In~\cref{sec: proof of single palette classification}, we give a proof of \cref{thm: classification for single}, which is helpful for understanding the proof of \cref{thm: classification for multicase}. In~\cref{sec: proof of multi-palette classification}, we introduce the notion of symmetrization of $k$-palettes and prove two generalizations of~\cref{thm: classification for single}; see~\cref{thm: classification for multicase,Cor:classification for multi case}.
In~\cref{sec:example}, we prove~\cref{thm:values of k-2-pi,thm:non-principal family} as applications of~\cref{thm: classification for multicase}. Finally, we conclude with some remarks and further problems in the last section.

%%%%%%%%%%%%%%%%%%%%%%%%%%%%%%%%%%%%%%%
\section{Reducing $(k-2)$-uniform Tur\'an density to palette Tur\'an density}\label{sec:proof of palette density theorem}

In this section, we prove~\cref{Thm:main theorem 1}. Our proof is inspired by the strategy of Lamaison~\cite{Ander} for single $3$-graphs, but it requires a 
substantially more careful compactness argument in order to handle arbitrary families of $k$-graphs.

We begin with the following theorem, which yields the lower bound $\pi_{k-2}(\mathcal{F})\geq\pi^{\pal}_k(\mathcal{F})$.

\begin{theorem}[\cite{LWZ-25}]\label{thm: palette lower bound}
Let $k\geq 3$, $\mathcal F$ be a family of $k$-graphs and $\mathscr{P}$ be a $k$-palette.
 If $\mathcal{F}$ is not $\mathscr{P}$-colorable, then $\pi_{k-2}(\mathcal{F})\geq d(\mathscr{P})$.
\end{theorem}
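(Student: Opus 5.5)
The plan is the standard random palette construction. Let $\mathscr P=(\mathcal C,\mathcal T)$ with $d=d(\mathscr P)$, and fix $\mu>0$ and $n_0$. Take $n\ge n_0$ large and let $V=[n]$, ordered naturally. Colour each $(k-1)$-set $S\in\binom{V}{k-1}$ independently and uniformly at random with $\varphi(S)\in\mathcal C$. Let $H$ be the $k$-graph on $V$ whose edges are the $k$-sets $e=\{v_1<\dots<v_k\}$ with
\[
\bigl(\varphi(e\setminus\{v_1\}),\dots,\varphi(e\setminus\{v_k\})\bigr)\in\mathcal T.
\]
It then suffices to show two things: $H$ is always $\mathcal F$-free, and with positive probability $H$ is $(d,\mu,k-2)$-dense. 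Together these give that every $d'<d$, and in fact $d$ itself, lies in the set defining $\pi_{k-2}(\mathcal F)$.

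\textbf{Freeness.} This part is deterministic. Suppose some $F\in\mathcal F$ embeds into $H$ via an injection $\psi$. Pull back the order on $V$ to a linear order on $V(F)$, and define the colouring of $\binom{V(F)}{k-1}$ by $S\mapsto\varphi(\psi(S))$. Since $\psi$ preserves the order, every edge of $F$ maps to an edge of $H$ and so receives an admissible tuple in the correct order. Hence $F$ would be $\mathscr P$-colorable, contradicting the hypothesis.

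\textbf{Density.} For each $k$-set $e$, the $k$ faces $e\setminus\{v_i\}$ are distinct $(k-1)$-sets, so their colours are independent and uniform. Thus $\Pr[e\in E(H)]=d$ exactly. Fix a $(k-2)$-graph $G$ on $V$, and let $X_G=|\mathcal K_k(G)\cap E(H)|$. Then $\mathbb E X_G=d|\mathcal K_k(G)|$. Recolouring a single $(k-1)$-set $S$ changes $X_G$ by at most $n-k+1$, since $S$ lies in at most that many $k$-sets. McDiarmid's bounded differences inequality over the $N=\binom{n}{k-1}$ independent colours then gives
\[
\Pr\bigl[X_G< d|\mathcal K_k(G)|-\mu n^k\bigr]\le \exp\!\Bigl(-\frac{2\mu^2n^{2k}}{N n^2}\Bigr)\le \exp\bigl(-c\,\mu^2 n^{k+1}\bigr)
\]
for some $c=c(k)>0$.

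\textbf{Union bound.} The number of $(k-2)$-graphs on $V$ is $2^{\binom{n}{k-2}}\le 2^{n^{k-2}}$. This is $\exp(O(n^{k-2}))$, which is much smaller than $\exp(c\mu^2n^{k+1})$ for large $n$. So with positive probability the bound holds simultaneously for every $G$, and $H$ is $(d,\mu,k-2)$-dense.

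\textbf{Main obstacle.} The only step needing care is this concentration step, in particular checking that the exponent from bounded differences beats the count of $(k-2)$-graphs. Since $n^{k+1}$ dominates $n^{k-2}$ with lots of room, this goes through. If one prefers, the estimate can also be organised as a count over $(k-1)$-colourings, but the crude bound above is enough.
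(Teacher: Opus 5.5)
Your proposal is correct and is the same random palette construction the paper uses, which it attributes to \cite{LWZ-25}: colour the $(k-1)$-sets uniformly at random, keep the $k$-sets whose ordered pattern is admissible, pull back the order and colouring to get $\mathcal F$-freeness, and use concentration plus a union bound over the $2^{\binom{n}{k-2}}$ choices of $G^{(k-2)}$ for density. There is one arithmetic slip: with $N=\binom{n}{k-1}$ and $t=\mu n^k$, the McDiarmid exponent $2\mu^2n^{2k}/(Nn^2)$ is of order $\mu^2n^{k-1}$, not $\mu^2n^{k+1}$, so it beats the $O(n^{k-2})$ exponent from the union bound by only a factor of $n$, which is still enough, and the conclusion stands.
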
  

\cref{thm: palette lower bound} is obtained by randomly generating $\mathcal{F}$-free $(d(\mathscr{P}),\mu,k-2)$-dense $k$-graphs from the palette $\mathscr{P}$. A proof in the case where $\mathcal{F}$ consists of a single $k$-graph can be found in~\cite{LWZ-25}, and the same argument still works for arbitrary families $\mathcal{F}$ of $k$-graphs.

Therefore, in order to prove~\cref{Thm:main theorem 1}, it remains to establish the reverse inequality
\[
\pi_{k-2}(\mathcal{F})\le \pi^{\pal}_k(\mathcal{F}).
\]
A key concept in this part of the proof is the notion of reduced $k$-graphs,
which provides a structural abstraction of the original hypergraph and captures
the essential combinatorial features relevant to the $(k-2)$-uniform Tur\'an
density. We introduce this notion and prove the auxiliary lemmas needed later.

\subsection{Reduced $k$-graphs}\label{subsec1:Preliminary}

We now define reduced $k$-graphs, following the formalism introduced in~\cite{RRS-Mantel,LWZ-25}.

\begin{definition}\label{def-reduced-graph}
Given a finite index set $I \subset \mathbb N$ with $| I |=m$, for each $(k-1)$-set $\mathcal X\in \binom{I}{k-1}$, let $\mathcal {P}_{\mathcal X}$ denote a finite nonempty vertex set such that for any two distinct $\mathcal X, \mathcal X'\in \binom{I}{k-1}$ the sets $\mathcal {P}_{\mathcal X}$ and $\mathcal {P}_{\mathcal X'}$ are disjoint.
For each  $k$-set $\mathcal Y \in \binom{I}{k}$, let $\mathcal A_{\mathcal Y}$ denote a $k$-partite 
$k$-graph with a vertex partition $\{\mathcal {P}_{\mathcal X}: \mathcal X \in \binom{\mathcal Y}{k-1}\}$.
Then the $\binom{m}{k-1}$-partite $k$-graph $\mathcal A$ with
\[
V(\mathcal A)=\bigcup_{\mathcal X \in \binom{I}{k-1}}\mathcal {P}_{\mathcal X}, ~\text{and}~~ E(\mathcal A)=\bigcup_{\mathcal Y\in \binom{I}{k}} E(\mathcal A_{\mathcal Y})
\]
is called an {\it $m$-reduced $k$-graph} with \emph{index set} $I$, \emph{vertex classes} $\mathcal {P}_{\mathcal X}$ and {\it constituents}  $\mathcal A_{\mathcal Y}$ and is denoted by $\mathcal A=(I, \mathcal {P}_{\mathcal X}, \mathcal A_{\mathcal Y})$. 
\end{definition}

For brevity, we often simply write ``$\mathcal A$ is an $m$-reduced $k$-graph''  instead of ``$\mathcal A$ is an $m$-reduced $k$-graph with index set $[m]$, vertex classes $\mathcal {P}_{\mathcal X}$ and
constituents $\mathcal A_{\mathcal Y}$''. 
Given an $m$-reduced $k$-graph $\mathcal A$ and a real number $d \in[0, 1] $, we say that 
$\mathcal A$ is {\it $d$-dense} if 
\[
|E(\mathcal A_{\mathcal Y})|\ge d \cdot \prod_{{\mathcal X}\in \binom{\mathcal Y}{k-1}}|\mathcal {P}_{\mathcal X}|
\]
for every $\mathcal Y \in \binom{[m]}{k}$.

We next describe which configurations in a reduced $k$-graph correspond to copies of forbidden $k$-graphs in the original $(d,\mu,k-2)$-dense hypergraph.

\begin{definition}\label{def-reduce-map}
A \emph{reduced map} from a $k$-graph $F$ to an $m$-reduced $k$-graph $\mathcal A=([m], \mathcal P_{\mathcal X}, \mathcal A_{\mathcal Y})$ is a pair $(\phi,\psi)$ such that
\begin{enumerate}[label=(\arabic*)]
\item $\phi:V(F)\to [m]$ and $\psi: \partial F\to V(\mathcal A)$ are both injective;
\item if $S=\{i_1,i_2,\dots,i_{k-1}\}\in \partial F$, then $\mathcal X=\{\phi(i_1),\phi(i_2),\dots,\phi(i_{k-1})\}\in \binom{[m]}{k-1}$ and $\psi(S)\in \mathcal P_{\mathcal X}$;
\item if $e=\{i_1,i_2,\dots, i_{k} \} \in E(F)$, then $\mathcal Y= \{\phi(i_1), \phi(i_2), \dots, \phi(i_{k})\}\in \binom{[m]}{k}$ and
\[
\{\psi(e\setminus \{i_{1}\}), \psi(e\setminus \{i_{2}\}), \dots, \psi(e\setminus \{i_{k}\})\}\in E( \mathcal A_{\mathcal Y}).
\]
\end{enumerate}
Given an $m$-reduced $k$-graph $\mathcal A$ and a family $\mathcal{F}$ of $k$-graphs, we say that $\mathcal A$ \emph{embeds} $\mathcal{F}$ if there exists $F\in\mathcal F$ such that there is a reduced map from $F$ to $\mathcal A$.
\end{definition}

With this concept in hand, the first author together with the latter two authors~\cite{LWZ-25} established a general result reducing the problem of proving an upper bound on the $(k-2)$-uniform Tur\'an density of a single $k$-graph to the problem of embedding it into sufficiently dense reduced $k$-graphs. Although the statement in~\cite{LWZ-25} is formulated for single $k$-graphs, its proof works for families of $k$-graphs as well.

\begin{theorem}[\cite{LWZ-25}] \label{thm-turan-reduced}
Let $\mathcal{F}$ be a family of $k$-graphs with $k\ge 3$, and let $d\in [0,1]$.
If for every $\eps>0$ there exists $m_0\in \mathbb N$ such that for every $m\geq m_0$, every $(d+\eps)$-dense $m$-reduced $k$-graph $\mathcal A$ embeds $\mathcal{F}$, then $\pi_{k-2}(\mathcal{F})\leq d$.
\end{theorem}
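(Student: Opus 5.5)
The plan is the standard route via the hypergraph regularity lemma. Argue by contradiction: fix $\eps>0$, let $m=m_0(\eps/2)$ be supplied by the hypothesis (applied with $\eps/2$ in place of $\eps$), and suppose that for some $\mu>0$ there are arbitrarily large $\mathcal F$-free $(d+\eps,\mu,k-2)$-dense $k$-graphs $H$ on a vertex set $V$ with $|V|=n$. We will extract from such an $H$ a $(d+\eps/2)$-dense $m$-reduced $k$-graph $\mathcal A$ that does not embed $\mathcal F$, contradicting the hypothesis. Since $\mu$ can be taken arbitrarily small relative to $\eps$, this gives $\pi_{k-2}(\mathcal F)\le d+\eps$ for every $\eps>0$.

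\emph{Step 1 (regularise and choose lower levels).} Apply the regularity lemma for $k$-graphs to $H$ with constants chosen in the usual hierarchy. This yields a vertex partition $V_1\cup\dots\cup V_t$ and, for each $j\le k-1$, partitions of the crossing $j$-sets into $j$-partite cells, such that $H$ is $\delta_k$-regular with respect to almost all $(k-1)$-level polyads. From this we build the lower-level structure.
\begin{itemize}
\item For every $(k-2)$-subset of clusters, choose one $(k-2)$-level cell independently and uniformly at random, lying on top of the chosen cells of lower order.
\item The union of these chosen cells is a $(k-2)$-graph $G$.
\item For each $(k-1)$-set $X$ of clusters, the $(k-1)$-level cells lying on the polyad of $G$ over $X$ form a finite set; this set will play the role of $\mathcal P_X$.
\item For each $k$-set $Y$ of clusters, declare a $k$-tuple of such cells (one for each $X\in\binom{Y}{k-1}$) to be an edge of $\mathcal A_Y$ if $H$ is $\delta_k$-regular with relative density at least roughly $\eps/4$ on the corresponding $k$-polyad.
\end{itemize}

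\emph{Step 2 (density transfer).} Next we show that the constituents $\mathcal A_Y$ are typically $(d+\eps/2)$-dense. The $(d+\eps,\mu,k-2)$-density of $H$ is applied to $G$ and to suitable sub-$(k-2)$-graphs of $G$ restricted to chosen cluster tuples. Using the counting lemma to estimate $|\mathcal K_k(G)|$ polyad by polyad, this shows that for a random choice, the average over $Y$ of the relative density of $H$ on the cliques of $G$ over $Y$ is at least $d+\eps-o(1)$. Discarding irregular and sparse polyads costs only $o(1)$ plus about $\eps/4$. Hence a typical $Y$ has $|E(\mathcal A_Y)|\ge(d+\eps/2)\prod_X|\mathcal P_X|$. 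Since only an averaged statement is available, I would colour the $k$-subsets of $[t]$ by ``good/bad'' and apply Ramsey's theorem $R_k(m,2)$, with $t$ huge. Because bad sets form a small fraction, the monochromatic $m$-set must be good. Its clusters yield the index set $[m]$ of a $(d+\eps/2)$-dense $m$-reduced $k$-graph $\mathcal A$.

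\emph{Step 3 (embedding gives a copy).} By the hypothesis, $\mathcal A$ embeds some $F\in\mathcal F$ via a reduced map $(\phi,\psi)$. Here $\phi$ assigns the vertices of $F$ to distinct clusters, and $\psi$ assigns each shadow set of $F$ to a $(k-1)$-cell. Every edge of $F$ then lands on a regular polyad of density at least $\eps/4$, all built on the common regular $(k-2)$-level structure $G$. The embedding/counting lemma for regular complexes then gives a (positive proportion of) copies of $F$ in $H$, contradicting $\mathcal F$-freeness.

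The main obstacle is Step 2: showing that a single random choice of lower-level cells satisfies the density condition simultaneously for all $k$-sets in a large index set. This requires a careful combination of the denseness hypothesis applied to many auxiliary $(k-2)$-graphs, a concentration argument, and the Ramsey cleaning; the constants must be chosen so that $\mu$ is small compared with the inverse of the number of cells produced by the regularity lemma.
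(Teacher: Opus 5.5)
The overall architecture of your proposal is right. You regularise $H$, fix one consistent $(k-2)$-level complex $G$, take the $(k-1)$-cells over it as the classes $\mathcal P_{\mathcal X}$, and let $\mathcal A_{\mathcal Y}$ record the regular polyads of density at least $\eps/4$. You then pull a reduced map back with the embedding lemma. This is affordable because an injective $\phi$ forces $|V(F)|\le m$, so $\delta_k\ll 1/|V(F)|$ can be fixed before regularising. This is the Reiher--R\"odl--Schacht argument behind the statement, which the paper quotes from \cite{LWZ-25} without proof.

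There is a genuine gap in the selection at the end of Step~2, which is the step you yourself call the main obstacle. Two inferences there fail.
\begin{enumerate}
\item An averaged bound does not make bad $k$-sets rare. If you only know that the mean relative density over $\mathcal Y$ is at least $d+\eps-o(1)$, the proportion of $\mathcal Y$ with density below $d+\eps/2$ can be as large as $(1-d-\eps)/(1-d-\eps/2)$, which is close to $1$.
\item Even if bad $k$-sets were rare, Ramsey's theorem does not tell you the colour of the monochromatic $m$-set. A small global proportion of bad sets is compatible with an all-bad $m$-set, so ``bad sets are few, hence the monochromatic set is good'' is a non sequitur.
\end{enumerate}
Your colouring also merges two different kinds of badness: low density of $H$ over $G_{\mathcal Y}$, and many irregular polyads over $G_{\mathcal Y}$. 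Irregularity is only globally rare and can concentrate on whatever set Ramsey hands you.

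For $k\ge4$, no averaging is needed for density. Apply $(d+\eps,\mu,k-2)$-density to the $(k-2)$-graph $G_{\mathcal Y}$ formed by the chosen cells on the $(k-2)$-subsets of a \emph{single} $k$-set $\mathcal Y$ of clusters.
\begin{itemize}
\item Since $k-2\ge2$, every member of $\mathcal K_k(G_{\mathcal Y})$ is crossing.
\item $\mathcal K_k(G_{\mathcal Y})$ is the disjoint union of the sets $\mathcal K_k(\hat P^{(k-1)})$ over the $a_{k-1}^k$ polyads on top of $G_{\mathcal Y}$.
\item By condition~\ref{p4} of \cref{def-eq-partition}, $|\mathcal K_k(G_{\mathcal Y})|\ge c(t_0)n^k$. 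Taking $\mu\ll\eps\,c(t_0)$ then gives relative density at least $d+\eps-o(1)$ for every $\mathcal Y$, deterministically.
\end{itemize}
Irregularity is then the only source of badness, and you handle it by averaging, not Ramsey.
\begin{itemize}
\item Pick the $m$ clusters so that every $k$-tuple among them has at most a $\sqrt{\delta_k}$-fraction of irregular polyads. A uniformly random $m$-set works once $\binom{m}{k}\sqrt{\delta_k}\ll1$.
\item Choose $G$ at random level by level. Then the expected fraction of irregular polyads over $G_{\mathcal Y}$ equals the fraction among all polyads on $\mathcal Y$.
\item Finish with Markov's inequality and a union bound over the $\binom{m}{k}$ tuples, as in \cref{claim-single,claim:choice of polyads}.
\end{itemize}

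For $k=3$ the situation is different, because $\mathcal K_3(G^{(1)})=\binom{U}{3}$ and only averaged information is available. Ramsey is the right tool there, but for a hereditary reason you do not state. Applying the density condition to $U=\bigcup_{i\in S}V_i$ shows that, for \emph{every} set $S$ of at least $C/\eps$ clusters, the average crossing density over triples in $S$ is at least $d+\eps-O(1/|S|)-o(1)$. Hence no such $S$ is entirely density-bad, and the monochromatic set must be good. This Ramsey step must be run for density alone, inside a set of clusters already cleaned of irregular triples by averaging.
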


Note that for any family of $k$-graphs $\mathcal{F}$ with $\pi_{k-2}(\mathcal{F})=d$ and any $\eps > 0$, by \cref{thm-turan-reduced}, there exists a $(d-\eps)$-dense $m$-reduced $k$-graph $\mathcal A$ that does not embed any $k$-graph from $\mathcal{F}$ for sufficiently large $m$. If we can ``extract'' a $k$-palette $\mathscr{P}$ from $\mathcal A$ such that $\mathcal{F}$ is not $\mathscr{P}$-colorable and  $d(\mathscr{P})\approx d$, then we may prove that $\pi_{k-2}(\mathcal{F})\le \pi^{\rm pal}_{k}(\mathcal{F})$. 
Indeed, this is precisely the strategy we shall follow. Since $\mathcal F$ may be infinite, however, a more delicate analysis is required.

Therefore, in what follows, we turn to the relationship between reduced $k$-graphs and $k$-palettes.
Given a $k$-palette $\mathscr{P}=(\mathcal{C},\mathcal{T})$ with density $d$ and a positive integer $m$, we define a $d$-dense $m$-reduced $k$-graph $\mathcal{A}(\mathscr{P})$ as follows. For each $\mathcal X\in \binom{[m]}{k-1}$, let $\mathcal P_{\mathcal X}$ be a copy of $\mathcal C$. For each $\mathcal Y=\llbracket i_1,i_2,\ldots,i_k\rrbracket\in\binom{[m]}{k}$, let $\mathcal X_j=\mathcal Y\setminus\{i_j\}$ for $j\in[k]$, and define the constituent $\mathcal A_{\mathcal Y}$ so that, for every $(c_1,c_2,\dots,c_k)\in\mathcal T$, the set
\[
\{c_{\mathcal X_1}^1,c_{\mathcal X_2}^2,\dots,c_{\mathcal X_k}^k\}
\]
is an edge of $\mathcal A_{\mathcal Y}$, where $c_{\mathcal X_j}^j$ denotes the copy of $c_j$ in $\mathcal P_{\mathcal X_j}$. We now establish the following lemma for finite families of $k$-graphs.

\begin{lemma}\label{lm:colorable equals embedding}
Let $\mathcal{F}$ be a family of $k$-graphs each having at most $n$ vertices, and let $\mathscr{P}=(\mathcal{C},\mathcal{T})$ be a $k$-palette. Then $\mathcal{F}$ is $\mathscr{P}$-colorable if and only if the $n$-reduced $k$-graph $\mathcal{A}(\mathscr{P})$ embeds $\mathcal{F}$.
\end{lemma}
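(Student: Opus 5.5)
The proof will prove both directions directly, by translating between a linear order together with a coloring of $(k-1)$-sets, and a pair $(\phi,\psi)$ into $\mathcal A(\mathscr P)$. The basic correspondence is this. A vertex $v$ of the $k$-set $\mathcal Y=\llbracket i_1,\dots,i_k\rrbracket$ that occupies position $j$ corresponds to the $(k-1)$-set $\mathcal X_j=\mathcal Y\setminus\{i_j\}$. Under the natural order on $[n]$, the face of $\mathcal Y$ opposite its $j$-th smallest element receives color slot $j$. This matches the palette condition exactly, where $\varphi(e\setminus\{v_{i_j}\})$ is the $j$-th entry of the tuple.

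For the forward direction, suppose some $F\in\mathcal F$ is $\mathscr P$-colorable via $\prec$ and $\varphi$. Since $|V(F)|\le n$, I define $\phi:V(F)\to[n]$ as the order-preserving injection, sending the $t$-th vertex under $\prec$ to $t$. For $S\in\partial F$, I set $\psi(S)$ to be the copy of $\varphi(S)$ in $\mathcal P_{\phi(S)}$. Then $\psi$ is injective, because distinct $S$ have distinct images $\phi(S)$ and the classes $\mathcal P_{\mathcal X}$ are disjoint. Now take an edge $e=\{v_{1}\prec\dots\prec v_{k}\}$. Then $\phi(e)=\llbracket\phi(v_1),\dots,\phi(v_k)\rrbracket$, and $\phi(e\setminus\{v_j\})=\mathcal X_j$. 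The tuple $(\varphi(e\setminus\{v_1\}),\dots,\varphi(e\setminus\{v_k\}))$ lies in $\mathcal T$, so by the definition of $\mathcal A(\mathscr P)$ the set $\{\psi(e\setminus\{v_j\})\}_j$ is an edge of $\mathcal A_{\phi(e)}$. Hence $(\phi,\psi)$ is a reduced map.

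For the converse, suppose $(\phi,\psi)$ is a reduced map from some $F\in\mathcal F$ to $\mathcal A(\mathscr P)$. I define $u\prec v$ if and only if $\phi(u)<\phi(v)$; this is a linear order because $\phi$ is injective. For $S\in\partial F$, the vertex $\psi(S)$ lies in $\mathcal P_{\phi(S)}$, which is a copy of $\mathcal C$, and I let $\varphi(S)$ be the color it copies. On $(k-1)$-sets outside $\partial F$, $\varphi$ is arbitrary; no constraint applies there. For an edge $e$ with vertices $v_1\prec\dots\prec v_k$, the set $\{\psi(e\setminus\{v_j\})\}$ is an edge of $\mathcal A_{\mathcal Y}$ with $\mathcal Y=\phi(e)$, and its element in class $\mathcal X_j$ is $\psi(e\setminus\{v_j\})$.

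The only subtle point, and the main obstacle, is checking that membership of this edge in $\mathcal A_{\mathcal Y}$ really yields the ordered tuple $(\varphi(e\setminus\{v_1\}),\dots,\varphi(e\setminus\{v_k\}))\in\mathcal T$. This holds because every edge of $\mathcal A_{\mathcal Y}$ comes from some $(c_1,\dots,c_k)\in\mathcal T$, with the $j$-th color placed in class $\mathcal X_j$, and the classes are disjoint. So the $k$-set determines its originating tuple uniquely by reading off the class labels, and that tuple is exactly $(\varphi(e\setminus\{v_j\}))_j$. Therefore $F$ is $\mathscr P$-colorable, and so is $\mathcal F$.
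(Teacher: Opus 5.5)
Your proposal is correct and follows essentially the same route as the paper: in the forward direction an order-preserving $\phi$ and $\psi(S)$ the copy of $\varphi(S)$ in $\mathcal P_{\phi(S)}$; in the converse, the order induced by $\phi$, colors read off from $\psi$, and an arbitrary extension off $\partial F$. You also spell out two points the paper calls immediate: that $\psi$ is injective, and that an edge of $\mathcal A_{\mathcal Y}$ determines its originating tuple in $\mathcal T$ via the disjoint classes $\mathcal X_j$.
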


\begin{proof}
Suppose first that $\mathcal{F}$ is $\mathscr{P}$-colorable. Then there exists $F\in \mathcal{F}$ such that $F$ is $\mathscr{P}$-colorable. Set $m=|V(F)|\le n$. Let $v_1\prec v_2\prec \cdots \prec v_m$
be an ordering of $V(F)$ and let $\varphi:\binom{V(F)}{k-1}\to \mathcal C$ be a function witnessing that $F$ is $\mathscr{P}$-colorable. Define $\phi:V(F)\to [n]$ by $\phi(v_i)=i$ for each $i\in[m]$. Next, for every $S=\{v_{i_1},v_{i_2},\dots,v_{i_{k-1}}\}\in \partial F$, define $\psi(S)$
to be the copy of the color $\varphi(S)$ in the vertex class $\mathcal P_{\mathcal{I}}$ of $\mathcal A(\mathscr P)$, where $\mathcal{I}=\{i_1,i_2,\dots, i_{k-1}\}$. Then condition~(2) in~\cref{def-reduce-map} holds by construction. Moreover, if
\[
e=\{v_{i_1},v_{i_2},\dots,v_{i_k}\}\in E(F)
\quad\text{with}\quad
v_{i_1}\prec v_{i_2}\prec \cdots \prec v_{i_k},
\]
then, since $\varphi$ witnesses that $F$ is $\mathscr P$-colorable,
\[
\bigl(\varphi(e\setminus\{v_{i_1}\}),\varphi(e\setminus\{v_{i_2}\}),\dots,\varphi(e\setminus\{v_{i_k}\})\bigr)\in\mathcal T.
\]
By the definition of $\mathcal A(\mathscr P)$, this implies that
\[
\{\psi(e\setminus\{v_{i_1}\}),\psi(e\setminus\{v_{i_2}\}),\dots,\psi(e\setminus\{v_{i_k}\})\}\in E(\mathcal A_{\phi(e)}),
\]
where $\phi(e)=\{\phi(v):v\in e\}$. Hence $(\phi, \psi)$ is a reduced map from $F$ to $\mathcal A(\mathscr P)$, so $\mathcal A(\mathscr P)$ embeds $\mathcal F$.

Conversely, suppose that $\mathcal A(\mathscr P)$ embeds $\mathcal F$. Then there exist $F\in\mathcal F$ and a reduced map $(\phi,\psi)$ from $F$ to $\mathcal A(\mathscr P)$. The map $\phi:V(F)\to [n]$ induces an ordering ``$\prec$'' on $V(F)$: $v_i\prec v_j$ if and only if $\phi(v_i)<\phi(v_j)$. For each $S\in \partial F$, define $\varphi(S)\in\mathcal C$ to be the color represented by the vertex $\psi(S)\in \mathcal P_{\phi(S)}$, where $\phi(S)=\{\phi(v):v\in S\}$. Extend $\varphi$ arbitrarily to all $(k-1)$-subsets of $V(F)$ not in $\partial F$.
It is then immediate from the definition of
$\mathcal A(\mathscr P)$ that the order $\prec$ and the function $\varphi$
witness that $F$ is $\mathscr P$-colorable. Hence $\mathcal F$ is
$\mathscr P$-colorable.
\end{proof}

Note that \cref{lm:colorable equals embedding} applies only to finite families of $k$-graphs. 
For infinite families, there is no fixed $n$ for which the $n$-reduced 
$k$-graph $\mathcal A(\mathscr P)$ embeds $\mathcal F$.

The final ingredient needed in the proof of \cref{Thm:main theorem 1} is a ``density-preserving contraction lemma'' for $N$-reduced $k$-graphs. Roughly speaking, it asserts that every sufficiently large $N$-reduced $k$-graph can be contracted to an $m$-reduced $k$-graph with bounded size of vertex classes while almost preserving the density. 

\begin{lemma}[Density-preserving contraction lemma]\label{lem:key lemma for palette theorem}
For every $\varepsilon>0$ and integer $k\ge 3$, there exists an integer
$s=s(\varepsilon,k)$ such that for every $m\in \mathbb N$ there exists an integer
$N_0=N_0(m,\varepsilon,k)$ with the following property. For every $N\geq N_0$, if $\mathcal A$ is a
$d$-dense $N$-reduced $k$-graph, then there exist a subset
$M\subseteq [N]$ of size $m$ and, for each $\mathcal X\in \binom{M}{k-1}$, a multiset
$S_{\mathcal X}$ of $s$ vertices 
(not necessarily distinct) in $\mathcal P_{\mathcal X}$ such that the 
$m$-reduced $k$-graph induced by $\mathcal A$ on the multisets
$S_{\mathcal X}$ is $(d-\varepsilon)$-dense.
\end{lemma}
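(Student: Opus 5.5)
The plan is to sample the vertices and then apply Ramsey to make the sample good everywhere.

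\textbf{Random sampling.} For each $\mathcal X\in\binom{[N]}{k-1}$, choose a random multiset $S_{\mathcal X}$ of $s$ vertices, drawn independently and uniformly with replacement from $\mathcal P_{\mathcal X}$. All choices are independent over $\mathcal X$.

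For a fixed $\mathcal Y\in\binom{[N]}{k}$, consider the induced constituent on $\prod_{\mathcal X\subset\mathcal Y}S_{\mathcal X}$, counted with multiplicity. Its edge density is a $k$-partite U-statistic: an average of the edge indicator of $\mathcal A_{\mathcal Y}$ over $s^k$ independent-per-coordinate samples. Its expectation is the density of $\mathcal A_{\mathcal Y}$, which is at least $d$. Changing one sample changes this density by at most $1/s$, and there are $ks$ samples. McDiarmid's inequality therefore gives
\[
\Pr\bigl[\text{density}<d-\eps\bigr]\le \exp(-2\eps^2 s/k).
\]
Choose $s=s(\eps,k)$ so that this failure probability is at most some $\delta=\delta(\eps,k)$ to be fixed below. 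Crucially, $s$ depends only on $\eps$ and $k$, not on $m$.

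\textbf{Finding a good index set.} We cannot take a union bound over all $\binom{N}{k}$ sets $\mathcal Y$, so we pass to an averaging/Ramsey step. Call a sample outcome \emph{bad} at $\mathcal Y$ if the induced density there falls below $d-\eps$. By linearity of expectation, some fixed choice of the multisets $S_{\mathcal X}$ has at most $\delta\binom Nk$ bad $k$-sets. Fix such a choice.

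We now need an $m$-set $M\subseteq[N]$ containing no bad $k$-set, that is, an independent set of size $m$ in the ``bad'' $k$-graph, which has density at most $\delta$. Such a set does not come from density alone: we need $\delta$ below the Turán density of $K_m^{(k)}$, and that threshold tends to $0$ as $m\to\infty$, so $\delta$ would have to depend on $m$. This is the main obstacle.

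\textbf{Resolving the obstacle.} The fix is to use the sampling freedom differently. First I would draw independent multisets $S^{(1)}_{\mathcal X},\dots,S^{(t)}_{\mathcal X}$ with $t$ large, and concatenate them in a structured way. Alternatively, more cleanly, I would use hypergraph Ramsey, the quantity $R_k(m,r)$ from the notation section. The argument runs as follows:
\begin{enumerate}
\item Discretize. Round each constituent's induced ``type'' to finitely many classes, say the density rounded to multiples of $\eps/2$, recorded relative to the ordering of $\mathcal Y$, so the number of colors $r$ depends only on $\eps$ and $k$.
\item For every $\mathcal Y$, the expected induced density, averaged over the random choice, is at least $d$. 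So for most $\mathcal Y$ a good sample exists, but the sample is shared across the different $\mathcal Y$ containing a given $\mathcal X$.
\item To decouple, exploit that $S_{\mathcal X}$ is a multiset of size $s$. Pick a single random tuple per $\mathcal X$, and evaluate the quantity $\mathbb E_{\text{sample}}\bigl[\text{fraction of good }\mathcal Y\text{ inside }M\bigr]$ for a random $m$-set $M$.
\item By Markov, with $\delta\le 1/(2\binom mk)$ there is a good $m$-set. This again makes $\delta$, and hence $s$, depend on $m$. The statement forbids that, so this route fails as well.
\end{enumerate}
Hence Ramsey is genuinely needed. Color each $k$-set $\mathcal Y$ by a bounded-complexity description of $\mathcal A_{\mathcal Y}$. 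Concretely, apply a weak-regularity (Frieze–Kannan type) approximation to the $k$-partite constituent: a partition of each $\mathcal P_{\mathcal X}$ into boundedly many parts, depending on $\eps$ and $k$, together with rounded densities. However, the partitions of $\mathcal P_{\mathcal X}$ depend on $\mathcal Y$, and reconciling the $N-k+1$ different $\mathcal Y\supset\mathcal X$ is the crux. I would try first to show that the random-sampling concentration holds simultaneously, with probability at least $1/2$, for all $\mathcal Y$ inside a Ramsey-homogeneous $M$. I expect that obtaining this uniformity of the sample $S_{\mathcal X}$ across all $\mathcal Y\supseteq\mathcal X$ with $s$ independent of $m$ is the central difficulty of the whole proof.
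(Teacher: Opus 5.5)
Your proposal does not reach a proof. You correctly show why a single random sample followed by averaging, Markov or a union bound forces $s$ to depend on $m$. But you stop exactly where the real argument begins. Your last plan, which fixes a Ramsey-homogeneous $M$ for a coloring by constituent types and then hopes one random sample concentrates on every $\mathcal Y\subseteq M$, cannot work, because $M$ must be chosen in response to the samples. For instance, let each $\mathcal P_{\mathcal X}$ be a copy of $\{0,1\}^N$, and let $(v_1,\dots,v_k)$ be an edge of $\mathcal A_{\mathcal Y}$, $\mathcal Y=\llbracket y_1,\dots,y_k\rrbracket$, iff every $v_i\in\mathcal P_{\mathcal Y\setminus\{y_i\}}$ has a $1$ in coordinate $y_i$. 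All constituents are alike, of density $2^{-k}$. Yet for fixed $s$, a random $s$-sample at $\mathcal X$ has an all-zero column at some $z\in M\setminus\mathcal X$ with probability $1-(1-2^{-s})^{m-k+1}\to 1$. That single column kills every $\mathcal Y=\mathcal X\cup\{z\}$.

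Two ingredients are missing.

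\emph{First}, you need a selection lemma that moves all dependence on $m$ into $N_0$ rather than $s$ (\cref{lem: reduced-ramsey lem}). Suppose that for every $\mathcal X$ and $z\notin\mathcal X$ a forbidden set $B^z_{\mathcal X}$ is prescribed, containing at most a $\frac1{3k}$-fraction of the possible choices at $\mathcal X$. Then for $N\ge R_m(2km,\binom m{k-1})$ there are an $m$-set $M$ and, for each $\mathcal X\in\binom M{k-1}$, one choice avoiding $B^z_{\mathcal X}$ for all $z\in M\setminus\mathcal X$. The proof runs as follows:
\begin{itemize}
\item if every $m$-set failed, color it by the positions of a failing $\mathcal X$, and pass to a homogeneous set of size $2km$;
\item inside it take $\mathcal I=\{2m,4m,\dots,2(k-1)m\}$, and by double counting pick some $v^*$ lying in at most $m-1$ of the sets $B^z_{\mathcal I}$;
\item assemble an $m$-set with the homogeneous pattern around $\mathcal I$, using only indices $z$ with $v^*\notin B^z_{\mathcal I}$, which contradicts homogeneity.
\end{itemize}
So the failure probability per pair $(\mathcal X,z)$ only needs to be a constant.

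\emph{Second}, this lemma requires each bad event to concern one class with everything else already fixed. That is false for your one-shot sample, where the density on $\mathcal Y$ depends jointly on all $k$ samples. The paper therefore samples in $r=\lceil k(k-1)/\eps\rceil$ rounds of $t=\lceil 2k^2\eps^{-2}\log(3kr^{k-1})\rceil$ vertices each. It maintains $\nu(T^{w_1}_{\mathcal X_1},\dots,T^{w_k}_{\mathcal X_k})\ge d-\eps|\bar w|_+/(2k)$ for all round vectors $\bar w$ with distinct nonzero entries, where round $0$ means the whole class.
\begin{itemize}
\item Each new invariant in round $j$ has exactly one entry equal to $j$. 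So it is a Hoeffding condition on one fresh $t$-sample against already fixed sets.
\item A union bound over at most $r^{k-1}$ invariants bounds each $B^z_{\mathcal X}$ by a $\frac1{3k}$-fraction, and the selection lemma then shrinks $M_{j-1}$ to $M_j$.
\item Finally $S_{\mathcal X}$ is the union of the $r$ round samples. Count only tuples drawn from pairwise distinct rounds; they make up at least a $1-\binom k2/r\ge 1-\eps/2$ fraction of $[s]^k$.
\end{itemize}
This gives density at least $(1-\eps/2)(d-\eps/2)\ge d-\eps$, with $s=rt$ depending only on $\eps$ and $k$.
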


Whenever a multiset $S_{\mathcal X}$ contains repeated vertices, we regard its elements as labeled copies. 
We postpone the proof of \cref{lem:key lemma for palette theorem} to the next subsection and now complete the proof of \cref{Thm:main theorem 1}.

\begin{proof}[Proof of \cref{Thm:main theorem 1}]
If $\mathcal F=\emptyset$, then both sides are equal to $1$, so there is nothing
to prove. Hence we may assume that $\mathcal F$ is nonempty.
For each integer $n\ge 1$, let $\mathcal F_n\subseteq \mathcal F$ denote the subfamily consisting of all members of $\mathcal F$ with at most $n$ vertices. Then
\[
\mathcal F=\bigcup_{n=1}^{\infty}\mathcal F_n.
\]

Set $\pi:=\pi_{k-2}(\mathcal F)$. Fix $\varepsilon>0$.
Choose $n_0$ such that $\mathcal F_{n_0}\neq\emptyset$.
For every $n\ge n_0$, we shall construct a $k$-palette
$\mathscr P_n=(\cc_n,\mathcal T_n)$ with density at least $\pi-\frac{2\varepsilon}{3}$
such that $\mathcal F_n$ is not $\mathscr P_n$-colorable and
$|\cc_n|$ depends only on $\varepsilon$ and $k$.

We first claim that, for every $n\geq n_0$, there exists $N'(n)$ such that for every $N\geq N'(n)$ there exists an $N$-reduced
$k$-graph $\mathcal A_n$ of density at least $\pi-\varepsilon/3$ which does
not embed $\mathcal F_n$. Indeed, let $N'(n)$ be the constant $m_0$ obtained by applying~\cref{thm-turan-reduced} with
$d=\pi-\varepsilon/3$. Since $\mathcal F_n\subseteq\mathcal F$, we
have
\[
\pi_{k-2}(\mathcal F_n)\geq \pi_{k-2}(\mathcal F)=\pi.
\]
If no such $\mathcal A_n$ existed, then every $(\pi-\varepsilon/3)$-dense
$N$-reduced $k$-graph would embed $\mathcal F_n$. Hence, for every
$\eta>0$, every $(\pi-\varepsilon/3+\eta)$-dense $N$-reduced $k$-graph would
also embed $\mathcal F_n$. Applying~\cref{thm-turan-reduced}, we would obtain
\[
\pi_{k-2}(\mathcal F_n)\leq \pi-\frac{\varepsilon}{3},
\]
which is a contradiction. This proves the claim.

Fix $n$. Let $s=s({\varepsilon}/{3},k)$, $m=R_k(n,2^{s^k})$
and $N_0=N_0(m,{\varepsilon}/{3},k)$ be given by \cref{lem:key lemma for palette theorem}.
Let $N=\max\{N'(n),N_0\}$. Applying \cref{lem:key lemma for palette theorem} to $\mathcal A_n$, we obtain a subset
$M\subseteq[N]$ of size $m$ and, for each $\mathcal X\in\binom{M}{k-1}$, a multiset
$S_{\mathcal X}$ of $s$ vertices in $\mathcal P_{\mathcal X}$ such that the $m$-reduced
$k$-graph $\mathcal A'$ on these multisets is $(\pi-\frac{2\varepsilon}{3})$-dense.

Let $v_{\mathcal X}^1,\dots,v_{\mathcal X}^s$ be the vertices in 
$S_{\mathcal X}$
for each $\mathcal X\in\binom{M}{k-1}$, where repetitions are allowed. For each $\mathcal Y\in\binom{M}{k}$, the constituent
$\mathcal A'_{\mathcal Y}$ is determined by which of the $s^k$ possible $k$-tuples
\[
\{v_{\mathcal X_1}^{i_1},v_{\mathcal X_2}^{i_2},\dots,v_{\mathcal X_k}^{i_k}\},
\qquad (i_1,\dots,i_k)\in [s]^k,
\]
form edges, where $\binom{\mathcal Y}{k-1}=\{\mathcal X_1,\dots,\mathcal X_k\}$. Thus each constituent $\mathcal A'_{\mathcal Y}$ can be labeled by a subset of $[s]^k$, and there are exactly $2^{s^k}$ possible labels.

Since $m=R_k(n,2^{s^k})$, there exists a subset 
$M_0\subseteq M$ of size $n$ such that all constituents
$\mathcal A'_{\mathcal Y}$ with $\mathcal Y\in\binom{M_0}{k}$ receive the same label
$L\subseteq [s]^k$. Since $\mathcal A'$ is $(\pi-\frac{2\varepsilon}{3})$-dense, every constituent
$\mathcal A'_{\mathcal Y}$ with $\mathcal Y\in\binom{M_0}{k}$ contains at least $(\pi-\frac{2\varepsilon}{3})s^k$ edges. Hence
\[
\frac{|L|}{s^k}\ge \pi-\frac{2\varepsilon}{3}.
\]

Define a $k$-palette $\mathscr P_n=(\mathcal C_n,\mathcal T_n)$ by $\mathcal C_n=[s]$ and $\mathcal T_n=L$.
Then
\[
d(\mathscr P_n)=\frac{|L|}{s^k}\ge \pi-\frac{2\varepsilon}{3}.
\]
Moreover, after identifying each multiset $S_{\mathcal X}$ with the color set $[s]$, the $n$-reduced $k$-graph obtained from $\mathcal A'$ by restricting the index set to $M_0$ is precisely of the form $\mathcal A_n(\mathscr P_n)$.
Since this reduced $k$-graph is a subgraph of the blow-up of $\mathcal A_n$, it also does not embed $\mathcal F_{n}$. Therefore, by \cref{lm:colorable equals embedding}, the family $\mathcal F_{n}$ is not $\mathscr P_n$-colorable. 

Since $s$ is a constant, there are only finitely many possible
palettes $\mathscr P_n$.
Hence there exists an infinite increasing
sequence
\[
  n_1<n_2<\cdots
\]
with $n_1>n_0$ such that all palettes $\mathscr P_{n_j}$ are equal.
Let $\mathscr P=\mathscr P_{n_1}$.
We claim $\mathcal F$ is not $\mathscr P$-colorable. Indeed, suppose there is some $k$-graph $F\in \mathcal{F}$ that is $\mathscr P$-colorable. Choose $j$ such that $n_j\geq |V(F)|$. Then
$F\in\mathcal F_{n_j}$. Since $\mathcal F_{n_j}$ is not
$\mathscr P_{n_j}$-colorable and $\mathscr P_{n_j}=\mathscr P$, the graph $F$
is not $\mathscr P$-colorable, leading to a contradiction.

Then $\pi_k^{\pal}(\mathcal F)\ge d(\mathscr P)\ge \pi-\frac{2\varepsilon}{3}$. 
Since $\varepsilon>0$ was arbitrary, we conclude that $\pi_k^{\pal}(\mathcal F)\ge \pi_{k-2}(\mathcal F)$.
Together with \cref{thm: palette lower bound}, this yields
\[
\pi_k^{\pal}(\mathcal F)=\pi_{k-2}(\mathcal F).
\]
\end{proof}

\subsection{Density-preserving contraction lemma}

We establish the proof of~\cref{lem:key lemma for palette theorem} in this subsection.
Given a set $V$, a multiset $X$ contained in $V$, and a function
$f \colon V \to [0,1]$, we denote by $\bar{f}(X)$ the average value of $f$ on $X$.
We shall use the following concentration inequality, in which a vector is viewed as a multiset. 

\begin{lemma}[Hoeffding's inequality]\label{hoe-ineq}
Let $f\colon V\to [0,1]$ be a function, $t$ be a positive integer, and $\varepsilon>0$. Suppose that a vector
$X=(x_1,x_2,\ldots,x_t)$ is sampled uniformly at random from $V^t$. Then
\[
\Pr\bigl(\bar{f}(X)<\bar{f}(V)-\varepsilon\bigr)\le e^{-2\varepsilon^2 t}.
\]
\end{lemma}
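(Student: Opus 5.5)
Since $X$ is uniform on $V^t$, the coordinates $x_1,\dots,x_t$ are independent and each is uniform on $V$. Hence the random variables $Y_i:=f(x_i)$ are i.i.d., take values in $[0,1]$, and have mean $p:=\bar f(V)$. Moreover $\bar f(X)=\frac1t\sum_{i=1}^t Y_i$. So the statement is the classical one-sided Hoeffding bound for bounded i.i.d. variables. I would prove it by the exponential moment (Chernoff) method.

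\emph{Step 1 (Chernoff).} Fix $\lambda>0$. The event $\bar f(X)<p-\varepsilon$ is the event $\sum_i (p-Y_i)>\varepsilon t$. By Markov's inequality applied to $e^{\lambda\sum_i(p-Y_i)}$ and by independence,
\[
\Pr\bigl(\bar f(X)<p-\varepsilon\bigr)\le e^{-\lambda\varepsilon t}\,\bigl(\mathbb E\, e^{\lambda(p-Y_1)}\bigr)^t .
\]

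\emph{Step 2 (Hoeffding's lemma).} I would show that every random variable $Z$ with values in $[0,1]$ and mean $q$ satisfies $\mathbb E\, e^{\mu(Z-q)}\le e^{\mu^2/8}$ for all real $\mu$. Apply this with $Z=1-Y_1$ (so $q=1-p$) and $\mu=\lambda$.

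The proof goes as follows. By convexity of $z\mapsto e^{\mu z}$ on $[0,1]$ we have $e^{\mu z}\le (1-z)+z e^{\mu}$, so
\[
\mathbb E\, e^{\mu(Z-q)}\le e^{-\mu q}(1-q+qe^{\mu})=e^{L(\mu)},\qquad L(\mu):=-\mu q+\log(1-q+qe^{\mu}).
\]
One checks that $L(0)=0$ and $L'(0)=0$. Also $L''(\mu)=r(1-r)\le 1/4$, where $r=qe^\mu/(1-q+qe^\mu)\in[0,1]$. Taylor's theorem with remainder then gives $L(\mu)\le \mu^2/8$.

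\emph{Step 3 (optimize).} Combining the two steps gives the bound $\exp(-\lambda\varepsilon t+\lambda^2 t/8)$. Choosing $\lambda=4\varepsilon$ yields $e^{-2\varepsilon^2 t}$, as claimed.

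The only nontrivial point is the bound $L''\le 1/4$ in Hoeffding's lemma, which is elementary. Everything else is routine. Alternatively, one could simply cite the standard Hoeffding inequality, since the lemma is exactly that statement specialised to i.i.d. $[0,1]$-valued samples.
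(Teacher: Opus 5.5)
Your proof is correct: the Chernoff step, Hoeffding's lemma via the convexity bound and $L''=r(1-r)\le 1/4$, and the choice $\lambda=4\varepsilon$ all check out and give exactly $e^{-2\varepsilon^2 t}$. The paper gives no proof of this lemma and simply quotes it as the classical Hoeffding inequality for i.i.d.\ $[0,1]$-valued samples, so your argument is the standard textbook proof of the result the paper cites.
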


The following lemma states that in a reduced $k$-graph, despite excluding a small set of vertices for each possible extension of a $(k-1)$-tuple, one can still choose a large index subset on which all these forbidden sets can be simultaneously avoided.

\begin{lemma}\label{lem: reduced-ramsey lem}
For all integers $m\ge k\ge 3$, there exists $n\in \mathbb N$ with the following
property. Let $\mathcal A$ be an $n$-reduced $k$-graph. Suppose that for every $\mathcal X\in \binom{[n]}{k-1}$ and every
$i_k\in [n]\setminus \mathcal X$, we are given a subset
$B_{\mathcal X}^{i_k}\subset \mathcal P_{\mathcal X}$ satisfying
$|B_{\mathcal X}^{i_k}|\le \frac{1}{3k}\,|\mathcal P_{\mathcal X}|$.
Then there exists a subset $M\subseteq [n]$ with $|M|=m$ such that for every
$\mathcal X\in \binom{M}{k-1}$, one can choose a vertex
$v_{\mathcal X}\in \mathcal P_{\mathcal X}$ satisfying
$v_{\mathcal X}\notin B_{\mathcal X}^{i_k}
~\text{for all } i_k\in M\setminus \mathcal X$ .
\end{lemma}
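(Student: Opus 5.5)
The plan is to choose the representatives $v_{\mathcal X}$ on a large index set before shrinking to $M$, and then to use the hypergraph Ramsey theorem to pass to an $m$-set on which no choice is ever spoiled. I expect the step that forces the "bad pattern" to be empty to be the main obstacle (third paragraph below); the argument there is only a sketch and may need a different idea.

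\emph{Step 1: fixing the representatives.} Fix a large auxiliary integer $n_1$ depending on $m$ and $k$, and let $n=R_k(n_1,2^k)$ or a suitable iterate of it. For every $\mathcal X\in\binom{[n]}{k-1}$ pick $v_{\mathcal X}\in\mathcal P_{\mathcal X}$ uniformly at random, independently over $\mathcal X$. For a fixed $\mathcal X$ and fixed $i\notin\mathcal X$ we have $\Pr(v_{\mathcal X}\in B^{i}_{\mathcal X})\le \frac1{3k}$. Averaging over $i$ therefore gives some choice of $v_{\mathcal X}$ with $v_{\mathcal X}\in B^{i}_{\mathcal X}$ for at most a $\frac1{3k}$-fraction of the indices $i\in[n]\setminus\mathcal X$. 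Call a pair $(\mathcal X,i)$ \emph{bad} if $v_{\mathcal X}\in B^i_{\mathcal X}$.

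\emph{Step 2: Ramsey colouring by bad pattern.} Colour each $k$-set $\mathcal Y=\llbracket y_1,\dots,y_k\rrbracket\subseteq[n]$ by the vector in $\{0,1\}^k$ whose $j$-th entry records whether $(\mathcal Y\setminus\{y_j\},y_j)$ is bad. By the hypergraph Ramsey theorem we obtain a large set $M_1\subseteq[n]$ on which all $k$-sets receive the same pattern $\sigma\in\{0,1\}^k$. If $\sigma=0$, any $m$-subset $M\subseteq M_1$ works: for every $\mathcal X\in\binom{M}{k-1}$ and every $i\in M\setminus\mathcal X$, the pair $(\mathcal X,i)$ is good, so $v_{\mathcal X}$ avoids all $B^i_{\mathcal X}$, which is exactly the conclusion of \cref{lem: reduced-ramsey lem}.

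\emph{Step 3: ruling out $\sigma_j=1$ (main obstacle).} Suppose $\sigma_j=1$. Then for every $\mathcal X\in\binom{M_1}{k-1}$, every $i\in M_1$ falling into the $j$-th gap of $\mathcal X$ gives a bad pair. Choosing $\mathcal X$ with its elements spread out inside $M_1$, this gap contains a constant fraction of $M_1$. The difficulty is that the $\frac1{3k}$ bound from Step~1 is a statement relative to all of $[n]$, not relative to $M_1$, so it gives no contradiction directly. I would address this by making Step~1 robust:
\begin{enumerate}
\item Apply supersaturated Ramsey: a positive proportion $c=c(m,k)$ of all $n_1$-subsets of $[n]$ contain a monochromatic set of size $m'$.
\item Compute the expected number of pairs $(\mathcal X,i)$ lying inside such monochromatic sets with $\sigma_j=1$. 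The randomness of Step~1 makes each pair bad with probability at most $\frac1{3k}$, whereas a monochromatic set with $\sigma_j=1$ requires a set of pairs of density at least roughly $\frac1k$ among the $(\mathcal X,i)$ configurations of that order type to be bad simultaneously.
\end{enumerate}
Since $\frac1{3k}<\frac1k$ with room to spare, a first-moment computation (union bound over the $k$ positions, using the slack $3$ in $\frac1{3k}$) should show that with positive probability fewer than $c\binom{n}{n_1}$ of the $n_1$-sets are spoiled. Hence some monochromatic set with $\sigma=0$ survives, and Step~2 finishes the proof. Making the "density at least roughly $\frac1k$" claim precise — in particular, handling the dependence between a fixed $v_{\mathcal X}$ and the many $i$ in one gap — is the part I am least sure of. If it fails, I would instead colour larger sets, of size $k-1+t$, by the full membership pattern of $v_{\mathcal X}$ in the sets $B^{i}_{\mathcal X}$, and choose $v_{\mathcal X}$ only after the Ramsey step.
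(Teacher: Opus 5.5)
Your Steps 1--2 are sound: the case $\sigma=0$ does give the conclusion. Step 3, however, is a genuine gap, and it cannot be closed while $v_{\mathcal X}$ is fixed before $M$ is known.

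The guarantee of Step 1 is not enough. Let $B^i_{\mathcal X}=B_{\mathcal X}$ when $i\equiv\min\mathcal X\pmod{3k+1}$, where $B_{\mathcal X}$ is a fixed nonempty set of size at most $\frac1{3k}|\mathcal P_{\mathcal X}|$, and let $B^i_{\mathcal X}=\emptyset$ otherwise. Choosing every $v_{\mathcal X}\in B_{\mathcal X}$ satisfies your Step 1 bound for large $n$. Yet every $m$-set with $m\ge k(3k+1)$ contains $a_1<\dots<a_k$ in one residue class, and then $(\{a_1,a_3,\dots,a_k\},a_2)$ is a bad pair. So Step 2 can never return $\sigma=0$.

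Randomizing does not rescue this, for the reason you suspected: all pairs with the same $\mathcal X$ are decided by the single variable $v_{\mathcal X}$. If $B^i_{\mathcal X}=B_{\mathcal X}$ for all $i$ and $|B_{\mathcal X}|=\frac1{3k}|\mathcal P_{\mathcal X}|$, then a fixed $m'$-set is entirely bad, hence monochromatic with $\sigma=(1,\dots,1)$, with probability $(3k)^{-\binom{m'}{k-1}}$. This is a constant independent of $n$ and $n_1$. Splitting an $n_1$-set into disjoint $m'$-blocks shows it is spoiled with probability $1-o(1)$ as $n_1\to\infty$. Hence no first-moment count over bad configurations can certify that fewer than $c\binom n{n_1}$ sets are spoiled, and the slack between $\frac1{3k}$ and $\frac1k$ plays no role.

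The probability space is finite and every outcome has positive probability. So ``a good set survives with positive probability'' is just a restatement of the lemma, and randomness could only help through a quantitative bound that does not exist here. In both examples the lemma itself is trivial (take $v_{\mathcal X}\notin B_{\mathcal X}$); the failure lies in the order of the choices.

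The missing idea is to argue by contradiction and to pick the representative only after the Ramsey step, relative to the few indices of the homogeneous set. Your fallback points this way but leaves open what to colour and how to pick the vertex. The paper proceeds as follows.

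It assumes every $m$-set $M$ contains some $\mathcal X$ with $\bigcup_{i\in M\setminus\mathcal X}B^i_{\mathcal X}=\mathcal P_{\mathcal X}$. It colours $M$ by the relative position $\mathcal L\in\binom{[m]}{k-1}$ of such an $\mathcal X$ inside $M$. Taking $n=R_m(2km,\binom m{k-1})$ gives a homogeneous set $J$, identified with $[2km]$.

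It then fixes the spread-out set $\mathcal I=\{2m,4m,\dots,2(k-1)m\}$, and only now chooses the vertex. The fewer than $2km$ sets $B^i_{\mathcal I}$ with $i\in J\setminus\mathcal I$ have total size at most $\frac{2m}{3}|\mathcal P_{\mathcal I}|$. So some $v^*\in\mathcal P_{\mathcal I}$ lies in at most $m-1$ of them.

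Every gap of $\mathcal I$ in $J$ has at least $2m-1$ indices, hence at least $m$ indices $i$ with $v^*\notin B^i_{\mathcal I}$. Therefore $\mathcal I$ extends to an $m$-set $M_0\subseteq J$ with $\mathcal I$ in positions $\mathcal L$, using only such $i$. Homogeneity says the sets $B^i_{\mathcal I}$, $i\in M_0\setminus\mathcal I$, cover $\mathcal P_{\mathcal I}$, which contradicts the choice of $v^*$.

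The constant $\frac1{3k}$ is used only through $2km\cdot\frac1{3k}<m$, that is, relative to a bounded index set. That relative control is exactly what your Step 1 lacks.
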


\begin{proof}
Assume for contradiction that for every $M\in \binom{[n]}{m}$, 
there exists some
$\mathcal{X}=\{ x_1,x_2,\ldots,x_{k-1}\}
\in \binom{M}{k-1}$
such that
\[
\bigcup_{x_k\in M\setminus\mathcal X}
B^{x_k}_{\mathcal X}
=\mathcal P_{\mathcal X}.
\]
We label each ordered $m$-set $M=\llbracket u_1,u_2,\ldots,u_m\rrbracket\in \binom{[n]}{m}$ by the $(k-1)$-subset 
\[
\mathcal L(M):=\{\,i\in[m]:u_i\in \mathcal X\,\}\in \binom{[m]}{k-1}.
\]

Let $n=R_m(2km,\binom{m}{k-1})$.
By Ramsey's theorem, there exists a subset $J\subseteq[n]$ of size $2km$ such that every $M\in \binom{J}{m}$ receives the same label
$\mathcal L=\{\ell_1,\dots,\ell_{k-1}\}\in\binom{[m]}{k-1}$.
Relabeling the indices if necessary, we may assume that
$J=[2km]$.

Now let $\mathcal I=\{2m,4m,\dots,2(k-1)m\}$.
For each index $i_k\in[2km]\setminus\mathcal I$, we have
$ |B_{\mathcal I}^{i_k}|\le \frac{1}{3k}\,|\mathcal P_{\mathcal I}|$.
By double counting, at most $\frac{2}{3}|\mathcal P_{\mathcal I}|$
vertices in $\mathcal P_{\mathcal I}$ belong to at least $m$
of the sets $B_{\mathcal I}^{i_k}$.
Therefore, there exists a vertex
$v^*\in\mathcal P_{\mathcal I}$ that belongs to at most $m-1$
of these sets.

We now extend $\mathcal I$ to an ordered $m$-set
$M_0=\llbracket u_1',\dots,u_m'\rrbracket\subseteq J$ such that the positions indexed by $\mathcal L=\{\ell_1,\ldots,\ell_{k-1}\}$ are occupied by the elements of $\mathcal I$, namely
\[
u'_{\ell_j}=2jm \qquad\text{for every } j\in[k-1],
\]
and such that for every $u_i'\in M_0\setminus \mathcal I$, we have $v^*\notin B_{\mathcal I}^{u_i'}$.
This is possible because $v^*$ belongs to at most $m-1$ forbidden sets, whereas there are $2m-(k-1)\ge m$ available indices in $\{2im+1,2im+2,\ldots,2(i+1)m-1\}$ for $0\le i\le k-1$.
This ensures that
\[
v^*\notin
\bigcup_{j_k\in M_0\setminus\mathcal I}
B_{\mathcal I}^{j_k}.
\]

However, since all $m$-sets of $J$ receive the same label
$\mathcal L$, $M_0$ is labeled by $\mathcal L$, which implies
\[
\bigcup_{j_k\in M_0\setminus\mathcal I}
B_{\mathcal I}^{j_k}
=\mathcal P_{\mathcal I}.
\]
This contradicts the choice of $v^*$.
\end{proof}

\begin{proof}[Proof of \cref{lem:key lemma for palette theorem}]
We begin with a brief outline. The proof proceeds in $r$ rounds. In each round, we choose a multiset of $t$ vertices from every relevant vertex class and then apply \cref{lem: reduced-ramsey lem} to shrink the index set (denoted $M_i$) to a smaller index set (denoted $M_{i+1}$), on which all these choices simultaneously satisfy the required density conditions. After $r$ rounds, we will have selected $s=rt$ vertices in each surviving vertex class. The accumulated density losses are controlled such that every constituent still contains at least $(d-\varepsilon)s^k$ edges.

We now turn to the details. Let $r=\lceil k(k-1)\varepsilon^{-1}\rceil$, $t=\lceil 2k^2\varepsilon^{-2}\log(3kr^{k-1})\rceil$, and set $s=rt$. 
Starting from $m_r=m$, choose integers
$m_{r-1},m_{r-2},\ldots,m_0$
recursively such that \cref{lem: reduced-ramsey lem} can be applied with parameters $m_i$ and $m_{i-1}$ for each $i\in[r]$, and let $N_0:=m_0$.
Let $\mathcal A$ be a $d$-dense $N_0$-reduced $k$-graph (this suffices for the proof: for any $N>N_0$, a $d$-dense $N$-reduced $k$-graph contains a $d$-dense $N_0$-reduced $k$-graph as a subgraph). We recursively construct index sets $[N_0]=M_0\supseteq M_1\supseteq \cdots \supseteq M_r$ with
$|M_i|=m_i$.

For each round $i\in [r]$ and each $\mathcal X \in \binom{M_{i-1}}{k-1}$, we select a multiset $T_{\mathcal X}^i \subseteq \mathcal P_{\mathcal X}$ of size $t$. Finally, for every  $\mathcal X \in \binom{M_r}{k-1}$, we define $S_{\mathcal X}=\bigcup_{i=1}^r T_{\mathcal X}^i$, and hence $S_{\mathcal X}$ is a multiset of size $s$ in  $\mathcal P_{\mathcal X}$.
Given a $k$-set $\mathcal Y=\llbracket y_1,\dots,y_k\rrbracket\in \binom{M_r}{k}$ of indices, let $\mathcal X_i=\mathcal Y\setminus\{y_i\}$. Our goal is to prove that
\begin{equation}\label{eq:goal contraction}
|E(S_{\mathcal X_1},\dots,S_{\mathcal X_k})|\ge (d-\varepsilon)s^k.
\end{equation}
	
To achieve this, it suffices to ensure that for every choice of distinct rounds
$i_1,\dots,i_k\in[r]$ and every $\mathcal Y\in \binom{M_r}{k}$, we have
\begin{equation}\label{eq:goal contraction2}
|E(T_{\mathcal X_1}^{i_1},\dots,T_{\mathcal X_k}^{i_k})|
\ge \left(d-\frac{\varepsilon}{2}\right)t^k.
\end{equation}
Since each $k$-tuple of distinct rounds contributes at least $(d-\frac{\varepsilon}{2})t^k$ edges, summing over all such tuples gives
\[|E(S_{\mathcal X_1},\dots,S_{\mathcal X_k})|
\ge \frac{r!}{(r - k)!} (d-\tfrac{\varepsilon}{2}) t^k \ge (d-\varepsilon)s^k.\]

To ensure that \cref{eq:goal contraction2} holds, we need to maintain a stronger family of inductive invariants. For $j\in\{0,1,\dots,r\}$, let
\[
W_j:=\Bigl\{(w_1,\dots,w_k):
0\le w_i\le j \text{ for all } i\in[k], \text{ and all nonzero entries are distinct}\Bigr\}.
\]
Note that $|W_j|=1+\sum_{i=1}^k \binom{k}{i}\binom{j}{i}\cdot i!$ if $k\leq j$ and $|W_j|=1+\sum_{i=1}^j\binom{k}{i}\binom{j}{i} \cdot i!$ if $k> j$. 
For $\bar{w}=(w_1,w_2,\dots,w_k)\in W_j$, let
\[
|\bar w|_+ := |\{i\in[k]:w_i\ne 0\}|
\qquad\text{and}\qquad
w_{\max}:=\max\{w_1,\dots,w_k\}.
\]
Moreover, for $\mathcal Y=\llbracket y_1,\dots,y_k\rrbracket\in \binom{M_{w_{\max}}}{k}$, let
\[
\mathcal X_i=\mathcal Y\setminus \{y_i\}
\quad \text{~for each~} i\in[k].
\]
Recall that  $T_{\mathcal X}^i \subseteq \mathcal P_{\mathcal X}$ is a multiset of size $t$ that we will select for each $i\in [r]$ and let $T_{\mathcal X}^0=\mathcal P_{\mathcal X}$. Hence, it suffices to ensure that the following invariants throughout the construction, 
\begin{align}\label{eqlm2.6.2}
\nu(T_{\mathcal X_1}^{w_1},\dots,T_{\mathcal X_k}^{w_k})  \ge d-\frac{\varepsilon \cdot|\bar{w}|_+ }{2k} ~\text{ for each}~ \bar{w}\in W_r  ~\text{and}~ \mathcal Y\in \binom{M_{w_{\max}}}{k}, 
\end{align}
where $\nu(T_{\mathcal X_1}^{w_1},\dots,T_{\mathcal X_k}^{w_k})$ denotes the edge density of $\mathcal A$
induced on $T_{\mathcal X_1}^{w_1},\dots,T_{\mathcal X_k}^{w_k}$.

We now prove \cref{eqlm2.6.2} by induction on $j$. 
Assume that $M_{j-1}$ has already been constructed, together with multisets  $T_{\mathcal X}^{i}$ for each $i\in [j-1]$ and $\mathcal X\in \binom{M_{j-1}}{k-1}$ and that \cref{eqlm2.6.2} holds for all $\bar{w}\in W_{j-1}$.
We now construct $M_j$ and the multisets $T_{\mathcal X}^{j}$ for each $\mathcal X\in \binom{M_{j}}{k-1}$ and verify that all invariants hold for $\bar{w}\in W_{j}$. 
%We select multisets $T_{\mathcal X}^j$ for all $(k-1)$-sets $\mathcal X\subseteq M_{j-1}$ and then apply \cref{lem: reduced-ramsey lem} to obtain a subset $M_j\subseteq M_{j-1}$ of size $m_j$ so that the invariants remain valid.
	
Fix $\mathcal X\in \binom{M_{j-1}}{k-1}$. Let $(\mathcal P_{\mathcal X})^t$ denote the set of vectors of length $t$ with entries in $\mathcal P_{\mathcal X}$ and view a vector  as a multiset. We now start to choose $T_{\mathcal X}^j \in (\mathcal P_{\mathcal X})^t$. For each $z\in M_{j-1}\setminus\mathcal X$, let $B_{\mathcal X}^z\subseteq (\mathcal P_{\mathcal X})^t$ denote the set of vectors which would violate at least one invariant (with $\mathcal{Y}=\mathcal{X}\cup \{z\}$) if chosen as $T_{\mathcal X}^j$. We claim that
\[ |B_{\mathcal X}^z| \le \frac{1}{3k}|\mathcal P_{\mathcal X}|^t . \]
	
To prove this, fix $\mathcal X$ and $z$, and let $z=y_1$, $\mathcal X=\{y_2,\dots,y_k\}$, $\mathcal Y=\{y_1,\dots,y_k\}$ and $\mathcal X_i=\mathcal Y\setminus\{y_i\}$. Analogous to the definition of $W_j$, define 
\[
W_{j-1}':=\Bigl\{(w_2,\dots,w_k):
0\le w_i\le j-1 \text{ for } i=2,\dots,k,\text{ and all nonzero entries are distinct}\Bigr\}.
\]
For each $\bar w'=(w_2,\dots,w_k)\in W_{j-1}'$, define a function $f_{\bar{w}'}:\mathcal P_{\mathcal X_1}\to[0,1]$ as follows:
\[ 
f_{\bar w'}(v):=\nu\bigl(v,T_{\mathcal X_2}^{w_2},\dots,T_{\mathcal X_k}^{w_k}\bigr). 
\]
By the inductive hypothesis, we have 
\[\bar f_{\bar{w}'}(\mathcal P_{\mathcal X_1})\ge d - \frac{\varepsilon\cdot|\bar{w}'|_+}{2k},\]
where $\bar f_{\bar{w}'}(\mathcal P_{\mathcal X_1})$ is the average value of $f_{\bar{w}'}$ over $\mathcal P_{\mathcal X_1}$. 

Now choose $T_{\mathcal X_1}^j$ uniformly at random from $(\mathcal P_{\mathcal X_1})^t$. By \cref{hoe-ineq}, we have
\[
\Pr\!\left( \bar f_{\bar{w}'}(T_{\mathcal X_1}^j) \le \bar f_{\bar{w}'}(\mathcal P_{\mathcal X_1}) -\frac{\varepsilon}{2k} \right) \le e^{-2(\frac{\varepsilon}{2k})^2 t} \le \frac{1}{3kr^{k-1}}.
\]
Since the number of such functions is at most $|W_{j-1}'|\le j^{k-1}\le r^{k-1}$,
the union bound implies that with probability at least $1-\frac1{3k}$ none of the above bad events occurs. Hence, \[ |B_{\mathcal X}^z| \le \frac1{3k}|\mathcal P_{\mathcal X}|^t .\]

We now apply \cref{lem: reduced-ramsey lem} to the set $(\mathcal P_{\mathcal X})^t$ and $B_{\mathcal X}^z$ for $\mathcal X\in\binom{M_{j-1}}{k-1}$. This yields a subset $M_j\subseteq M_{j-1}$ with $|M_j|=m_j$ such that for every $\mathcal X\in\binom{M_j}{k-1}$, one can choose
$T_{\mathcal X}^j\in (\mathcal P_{\mathcal X})^t$
avoiding all corresponding forbidden sets $B_{\mathcal X}^z$ with
$z\in M_j\setminus \mathcal X$. By construction, this guarantees that \cref{eqlm2.6.2} holds for all $\bar w\in W_j$.
After completing $r$ such rounds,  we conclude that \cref{eq:goal contraction2} holds as desired.
\end{proof}

%%%%%%%%%%%%%%%%%%%%%%%%%%%%%
\section{Hypergraph regularity method}\label{sec-regular-method}

In this section, we present the regularity lemma for edge-colored hypergraphs, together with a corresponding embedding lemma. Our presentation follows the framework of R\"odl and Schacht~\cite{regularity-lemmas}, combined with results from~\cite{Embeddings,count-dense}.

The key notions underlying the hypergraph regularity method are regular complexes and equitable partitions. We begin by introducing these notions.

\subsection{Regular complex}

A \emph{mixed hypergraph} $\mathcal{H}$ consists of a vertex set $V(\mathcal{H})$ and an edge set $E(\mathcal{H})$, where each edge $e\in E(\mathcal{H})$ is a non-empty subset of $V(\mathcal{H})$. Thus, a $k$-graph, as defined earlier, is a mixed hypergraph in which every edge has size $k$. A mixed hypergraph $\mathcal{H}$ is called a \emph{complex} if every non-empty subset of every edge of $\mathcal{H}$ is also an edge of $\mathcal{H}$. Throughout this paper, we assume that every vertex of a complex lies in at least one edge. A complex is called a \emph{$k$-complex} if all its edges have size at most $k$. Given a $k$-complex $\mathcal{H}$ and $i\in [k]$, the edges of size $i$ are called the \emph{$i$-edges} of $\mathcal{H}$. We denote by $H^{(i)}$ the \emph{underlying $i$-graph} of $\mathcal{H}$, whose vertex set is $V(\mathcal{H})$ and whose edge set consists of all $i$-edges of $\mathcal{H}$. Note that every $k$-graph $H$ can be turned into a $k$-complex by replacing each edge with the complete $i$-graph $K_k^{(i)}$ on the same $k$ vertices for every $i\in [k]$.

Given $i\ge 2$, let $H^{(i)}$ be an $i$-graph and let $H^{(i-1)}$ be an $(i-1)$-graph on the same vertex set. We define the \emph{relative density} of $H^{(i)}$ with respect to $H^{(i-1)}$ by
\[
d(H^{(i)}\mid H^{(i-1)}):=
\begin{cases}
\dfrac{|E(H^{(i)})\cap \mathcal{K}_i(H^{(i-1)})|}{|\mathcal{K}_i(H^{(i-1)})|} & \text{if } |\mathcal{K}_i(H^{(i-1)})|>0,\\
0 & \text{otherwise}.
\end{cases}
\]
More generally, 
if $\mathbf Q=(Q(1),Q(2),\dots,Q(r))$ is an $r$-tuple of subhypergraphs of $H^{(i-1)}$, then we define $\mathcal{K}_i(\mathbf{Q}):= \bigcup^r_{j=1}\mathcal{K}_i(Q(j))$ and
\[
d(H^{(i)}\mid \mathbf{Q}):=
\begin{cases}
\dfrac{|E(H^{(i)})\cap \mathcal{K}_i(\mathbf{Q})|}{|\mathcal{K}_i(\mathbf{Q})|} & \text{if } |\mathcal{K}_i(\mathbf{Q})|>0,\\
0 & \text{otherwise}.
\end{cases}
\]

Given integers $s\ge k\ge 2$, an \emph{$(s,k)$-graph} $H^{(k)}$ is an $s$-partite $k$-graph, that is, the vertex set of $H^{(k)}$ can be partitioned into sets $V_1,\dots,V_s$ such that every edge of $H^{(k)}$ meets each $V_i$ in at most one vertex for every $i\in [s]$. Similarly, an \emph{$(s,k)$-complex} $\mathcal{H}^{\le k}$ is an $s$-partite $k$-complex.

Let $r\ge 1$ be an integer, let $d_i\ge 0$ and $\delta>0$ be real numbers, and let $H^{(i)}$ and $H^{(i-1)}$ be an $(i,i)$-graph and an $(i,i-1)$-graph, respectively, on the same vertex set.
We say $H^{(i)}$ is \emph{$(d_i, \delta, r)$-regular}  with respect to $H^{(i-1)}$ if every $r$-tuple $\mathbf{Q}$ of subhypergraphs of $H^{(i-1)}$ with $|\mathcal{K}_i(\mathbf{Q})| \geq \delta|\mathcal{K}_i(H^{(i-1)})|$ satisfies $d(H^{(i)}|\mathbf{Q}) = d_i \pm \delta$. 
Moreover, for two $s$-partite $i$-graph $H^{(i)}$ and $(i-1)$-graph $H^{(i-1)}$,  on the same vertex partition $V_1 \cup\dots \cup V_s$, we say that $H^{(i)}$ is \emph{$(d_i, \delta,r)$-regular} with respect to $H^{(i-1)}$ if for every $\Lambda_i\in \binom{[s]}{i}$ the restriction $H^{(i)}[\Lambda_i]:=H^{(i)}[\cup_{\lambda\in \Lambda_i }V_{\lambda}]$ is \emph{$(d_i, \delta,r)$-regular} with respect to the restriction $H^{(i-1)}[\Lambda_i]:=H^{(i-1)}[\cup_{\lambda\in \Lambda_i }V_{\lambda}]$.

\begin{definition}[Regular complex]
Let integers $s\ge k\ge 3$,  $\delta>0$ be a real number, and $\mathbf{d}=(d_2,\dots,d_{k-1})\in \mathbb{R}_{\ge 0}^{k-2}$. We say that an $(s,k-1)$-complex $\mathcal{H}^{\le k-1}=\{H^{(i)}\}_{i=1}^{k-1}$ is \emph{$(\mathbf{d},\delta,1)$-regular} if the $i$-graph $H^{(i)}$ is $(d_i,\delta,1)$-regular with respect to $H^{(i-1)}$ for every $i=2,\dots,k-1$.
\end{definition}

\subsection{Equitable partitions}\label{section-equ-partition}

Suppose that $V$ is a finite vertex set and that $\mathcal{P}^{(1)}=\{V_1,\dots,V_{a_1}\}$ is a partition of $V$, which will be called \emph{clusters}. A
partition is an \emph{equipartition} if the sizes of the clusters are same.

Given $k\ge 3$ and any $j\in [k]$, we denote by $\mathrm{Cross}_j=\mathrm{Cross}_j(\mathcal{P}^{(1)})$ the family of all crossing $j$-sets $J\in \binom{V}{j}$ such that $|J\cap V_i|\le 1$ for every cluster $V_i\in \mathcal{P}^{(1)}$.
For every index set $\Lambda\subseteq [a_1]$ with $2\le |\Lambda|\le k-1$, we write $\mathrm{Cross}_{\Lambda}$ for the family of all $|\Lambda|$-subsets of $V$ that meet each $V_i$ with $i\in \Lambda$. Let $\mathcal{P}_{\Lambda}$ be a partition of $\mathrm{Cross}_{\Lambda}$. We refer to the partition classes of $\mathcal{P}_{\Lambda}$ as \emph{$|\Lambda|$-cells}. For each $i=2,\dots,k-1$, let $\mathcal{P}^{(i)}$ be the union of all the $\mathcal{P}_{\Lambda}$ with $|\Lambda|=i$. Thus, $\mathcal{P}^{(i)}$ is a partition of $\mathrm{Cross}_i$ into several $(i,i)$-graphs.

Let $1\le i<j\le k$. For every $i$-set $I\in \mathrm{Cross}_i$, there exists a unique $i$-cell $P_I^{(i)}\in \mathcal{P}^{(i)}$ such that $I\in P_I^{(i)}$. For every $j$-set $J\in \mathrm{Cross}_j$, we define the \emph{polyad} of $J$ by
\[
\hat{P}_J^{(i)}:=\bigcup \bigl\{P_I^{(i)}: I\in \binom{J}{i}\bigr\}.
\]
Thus, $\hat{P}_J^{(i)}$ may be viewed as a $(j,i)$-graph whose vertex classes are the clusters intersecting $J$ and whose edge set is $\bigcup_{I\in \binom{J}{i}}E(P_I^{(i)})$. Let
\[
\hat{\mathcal P}^{(j-1)}:=\{\hat P_J^{(j-1)}: J\in \mathrm{Cross}_j\}.
\]
It is easy to verify  that $\{\mathcal{K}_j(\hat{P}^{(j-1)}) : \hat{P}^{(j-1)}\in \mathcal{\hat{P}}^{(j-1)}\}$ is also a partition of $\mathrm{Cross}_j$.

\begin{definition}[Family of partitions]\label{def-f-partition}
Suppose that $V$ is a vertex set,  $k\ge 2$ is an integer, and $\mathbf a=(a_1,\dots,a_{k-1})$ is a vector of positive integers. We say that
\[
\bm{\mathcal P}=\bm{\mathcal P}(k-1,\mathbf a)=\{\mathcal P^{(1)},\dots,\mathcal P^{(k-1)}\}
\]
is a \emph{family of partitions} on $V$ if the following hold:
\begin{enumerate}
\item[$\bullet$] $\mathcal P^{(1)}$ is a partition of $V$ into $a_1$ clusters.
\item[$\bullet$] For each $i=2,\dots,k-1$, $\mathcal P^{(i)}$ is a partition of $\mathrm{Cross}_i$ satisfying
\begin{equation}\label{cells}
\bigl|\{P^{(i)}\in \mathcal P^{(i)}: P^{(i)}\subseteq \mathcal K_i(\hat P^{(i-1)})\}\bigr|=a_i
\end{equation}
for every $\hat P^{(i-1)}\in \hat{\mathcal P}^{(i-1)}$.
\end{enumerate}
\end{definition}	

Note that, for each $J \in \mathrm{Cross}_j$ we can view $\bigcup^{j-1}_{i=1}\hat{P}^{(i)}_J$ as a $(j, j-1)$-complex.

Suppose that $Z\subseteq X$, that $\mathcal X$ is a partition
of $X$, and that $\mathcal Z$ is a partition of $Z$. We say
that $\mathcal X$ refines $\mathcal Z$, and write
$\mathcal X\prec \mathcal Z$, if for every $A\in\mathcal X$
there exists $B\in\mathcal Z$ such that either $A\subseteq B$
or $A\subseteq X\setminus Z$.

\begin{definition}[Refinement of partition families]\label{def-refinement}
Let $\bm{\mathcal{P}} = \bm{\mathcal{P}}(k-1, \textbf{a}^{\bm{\mathcal{P}}})$ and $\bm{\mathcal{Q}} = \bm{\mathcal{Q}}(k-1, \textbf{a}^{\bm{\mathcal{Q}}})$ be families of partitions on the same vertex set $V$. We say that $\bm{\mathcal P}$ \emph{refines} $\bm{\mathcal Q}$, and write $\bm{\mathcal P}\prec \bm{\mathcal Q}$, if $\mathcal{P}^{(j)} \prec \mathcal{Q}^{(j)}$  for every $j \in [k-1]$.
\end{definition}

\begin{definition}[$(\eta,\delta,t)$-equitable]\label{def-eq-partition}
Suppose that $V$ is a set of $n$ vertices, that $t\in \mathbb N$, that $\mathbf a=(a_1,\dots,a_{k-1})\in \mathbb N^{k-1}$, and that $\eta,\delta>0$. We say that a family of partitions  $\bm{\mathcal P} = \bm{\mathcal P}(k-1,\mathbf{a})$ is \emph{$(\eta,\delta,t)$-equitable} if it satisfies the following:
\stepcounter{propcounter}
\begin{enumerate}[label = ({\bfseries \Alph{propcounter}\arabic{enumi}})]
\item\label{p1} $\mathcal P^{(1)}$ is a partition of $V$ into $a_1$ clusters of equal size and $|\binom{V}{k}\setminus \mathrm{Cross}_k(\mathcal{P}^{(1)})|\leq \eta\binom{n}{k}$.
\item\label{p2} $\max\{a_1, \dots, a_{k-1}\}\le t$.
\item\label{p3} For every $k$-set $K \in \mathrm{Cross}_k$, the $(k, k-1)$-complex $\bigcup^{k-1}_{i=1}\hat{P}^{(i)}_K$ is $(\mathbf{d}, \delta,1)$-regular, where  $\mathbf{d}= (1/a_2, \dots , 1/a_{k-1})$. 
\item\label{p4}  For every $j\in [k-1] $ and every $k$-set $K \in \mathrm{Cross}_k$, we have
\[
|\mathcal{K}_k(\hat{P}^{(j)}_K)|=
(1\pm \eta) \frac{n^k}{\prod_{\ell=1}^ja_{\ell}^{\binom{k}{\ell}}}.
\]
\end{enumerate}
\end{definition}

\begin{remark}
Condition~\ref{p3} of \cref{def-eq-partition} implies that the $i$-cells of $\mathcal P^{(i)}$ have almost equal size. Moreover, condition~\ref{p4} is not part of the original definition of $(\eta,\delta,t)$-equitable partitions in R\"odl and Schacht~\cite{regularity-lemmas}. However, condition~\ref{p4} follows from conditions~\ref{p1} and~\ref{p3} together with the dense counting lemma from~\cite[Theorem 6.5]{count-dense}; see also~\cite[Theorem 3.1]{regularity-lemmas} and~\cite[Theorem 2.1]{counting-lemmas}.
\end{remark}

\subsection{Statements of the regularity lemma and embedding lemma }

Suppose that $\delta_k>0$ is a real number and $r\in \mathbb N$. Let $H$ be a $k$-graph on $V$, and $\bm{\mathcal P}=\bm{\mathcal P}(k-1,\mathbf a)$
be a family of partitions on $V$. Given a polyad $\hat P^{(k-1)}\in \hat{\mathcal P}^{(k-1)}$, we say that $H$ is \emph{$(\delta_k,r)$-regular} with respect to $\hat P^{(k-1)}$ if $H$ is $(d_k,\delta_k,r)$-regular with respect to $\hat P^{(k-1)}$, where $d_k=d(H\mid \hat P^{(k-1)})$.   
We now define what it means for $H$ to be \emph{$(\delta_k,r)$-regular} with respect to $\bm{\mathcal P}$.

\begin{definition}[\emph{$(\delta_k, r)$-regular} w.r.t. $\bm{\mathcal P}$]
We say a $k$-graph $H=(V,E)$ is \emph{$(\delta_k, r)$-regular} with respect to $\bm{\mathcal P}$ if
\[
\big|\bigcup\big\{\mathcal{K}_k(\hat{P}^{(k-1)}) : \hat{P}^{(k-1)}\in \mathcal{\hat{P}}^{(k-1)}
\text{ and } H \text{\ is\ not\ } (\delta_k, r)\text{-regular\ w.r.t.\ } \hat{P}^{(k-1)} \big\}\big| \le \delta_k |\mathrm{Cross}_k|.
\]
\end{definition}

This means that at most a $\delta_k$-fraction of the $k$-sets of $V$ span a $K_k^{(k-1)}$ contained in a polyad $\hat{P}^{(k-1)}$ with respect to which $H$ fails to be regular.

Finally, we introduce the notion of an $m$-colored $k$-graph.
Given a positive integer $m$, we say that a $k$-graph $H$ is $m$-colored if there exists a mapping $\chi: E(H) \to [m]$. For each $x \in [m]$, let $H_x$ denote the spanning subgraph of $H$ with edge set
$E(H_x)=\{e\in E(H): \chi(e)=x\}$.

We are now ready to state the regularity lemma for $m$-colored $k$-graphs, which is a direct consequence of~\cite[Lemma 4.1]{regularity-lemmas}.

\begin{theorem}[Regularity lemma]\label{multicolor hypergraph regularity lemma}
Let $\ell \ge k \ge 2$ and $m\ge 1$ be fixed integers. For all positive reals $\eta$ and $\delta_k$, and all functions 
$r:\mathbb{N}^{k-1}\to\mathbb{N}$ and $\delta:\mathbb{N}^{k-1}\to (0,1]$, there exist integers $t_0$ and $n_0$ such that the following holds.

For every $m$-colored $k$-graph $H$ with an initial equipartition $V(H)=U_1\cup\dots\cup U_\ell$, where $|V(H)|=n\ge n_0$, $(\ell \cdot t_0!)$ divides $n$, there exists a family of partitions $\bm{\mathcal{P}}=\bm{\mathcal{P}}(k-1,\mathbf{a})$ such that
\begin{enumerate}
\item $1/\eta<a_1<t_0$;
\item $\bm{\mathcal P}$ is $(\eta,\delta(\mathbf a),t_0)$-equitable and $\mathcal P^{(1)}$ refines the partition $U_1\cup\cdots\cup U_\ell$;
\item for each $x\in [m]$, the $k$-graph $H_x$ is $(\delta_k,r(\mathbf a))$-regular with respect to $\bm{\mathcal P}$.
\end{enumerate}
\end{theorem}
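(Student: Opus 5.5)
The statement is the regularity lemma for $m$-colored $k$-graphs. The plan is to deduce it from the Rödl–Schacht regularity lemma~\cite[Lemma 4.1]{regularity-lemmas}, which handles a finite collection of $k$-graphs on a common vertex set. That lemma yields, for any finite collection of $k$-graphs $H_1,\dots,H_m$ on a common vertex set $V$, a single family of partitions that is $(\eta,\delta(\mathbf a),t_0)$-equitable in their sense (conditions \ref{p1}--\ref{p3}) and with respect to which every $H_x$ is $(\delta_k,r(\mathbf a))$-regular. Applied to the color classes $H_x$ of $\chi$, it gives conclusion (3) directly. Since the color classes partition $E(H)$, nothing further is needed for the colored version.

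The remaining conclusions are handled as follows.

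\emph{Refining the initial equipartition $U_1\cup\dots\cup U_\ell$.} The Rödl–Schacht lemma allows one to start from an arbitrary initial family of partitions $\bm{\mathcal Q}$ with bounded parameters and returns $\bm{\mathcal P}\prec\bm{\mathcal Q}$. I would take $\mathcal Q^{(1)}=\{U_1,\dots,U_\ell\}$ and each $\mathcal Q^{(i)}$ to be the trivial partition of $\mathrm{Cross}_i$. The divisibility hypothesis $(\ell\cdot t_0!)\mid n$ ensures that every refinement of each $U_j$ into $a_1\le t_0$ equal clusters is possible, so $\mathcal P^{(1)}$ can be taken to be an equipartition.

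\emph{The bound $1/\eta<a_1$.} This is obtained by requiring the initial $\mathcal Q^{(1)}$ to have more than $1/\eta$ parts, by first splitting each $U_j$ further. Having $a_1>1/\eta$ also gives the non-crossing bound in \ref{p1}, since at most about $\binom{k}{2}/a_1$ of the $k$-sets are non-crossing. If needed, $\eta$ can be shrunk by a factor depending on $k$ before invoking the lemma.

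\emph{Condition \ref{p4}.} This is not part of the Rödl–Schacht definition. It follows from \ref{p1}, \ref{p3} and the dense counting lemma~\cite[Theorem 6.5]{count-dense}, provided the function $\delta(\mathbf a)$ is chosen small enough in terms of $a_2,\dots,a_{k-1}$ and $\eta$ to make the counting error at most $\eta$. So one replaces the given $\delta$ by $\min\{\delta(\mathbf a),\delta_{\mathrm{count}}(\mathbf a,\eta)\}$ before applying the lemma.

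I expect the main obstacle to be bookkeeping rather than new mathematics: the constants $t_0$ and $n_0$ must be chosen so that they depend only on $\ell,m,\eta,\delta_k,r,\delta$, and the modified $\delta$ function must still be an admissible input. Both are routine because the counting lemma's dependence is on $\mathbf a$ and $\eta$ alone.
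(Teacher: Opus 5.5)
Your proposal is correct and takes the same route as the paper: the paper obtains this theorem directly from \cite[Lemma 4.1]{regularity-lemmas}, applied to the colour classes $H_x$ with the initial equipartition, and its remark derives condition~\ref{p4} from \ref{p1}, \ref{p3} and the dense counting lemma, exactly as you do by shrinking $\delta(\mathbf a)$. Your extra bookkeeping for $a_1>1/\eta$ and for divisibility is routine, and it is handled correctly to the extent the cited lemma does not already provide it.
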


Finally, we state a general embedding lemma, which is a direct consequence of~\cite[Theorem 2]{Embeddings}.

\begin{theorem}[Embedding lemma]\label{E-lemma}
Let $f,k,r,n_1$ be positive integers, and let $d_2,d_3,\dots,d_k,\delta,\delta_k$ be positive constants such that $1/d_i\in \mathbb N$ for all $i<k$, and
\[
n_1^{-1}\ll r^{-1},\delta \ll \min\{\delta_k,d_2,\dots,d_{k-1}\}\le \delta_k \ll d_k,1/f<1.
\]
Then the following holds for all integers $n\ge n_1$. Let $F$ be a $k$-graph with vertex set $[f]$, and let $\mathbf d=(d_2,\dots,d_{k-1})$. Suppose that
$\mathcal H=\{H^{(j)}\}_{j=1}^{k-1}$
is a $(\mathbf d,\delta,1)$-regular $(f,k-1)$-complex with clusters $V_1,\dots,V_f$, all of size $n$. Suppose also that $H$ is an $f$-partite $k$-graph on the same vertex partition such that for each edge $\{i_1,\dots,i_k\}\in E(F)$, the graph $H$ is $(\delta_k,r)$-regular with respect to the restriction
$H^{(k-1)}[V_{i_1}\cup \dots \cup V_{i_k}]$
and
$d\bigl(H\mid H^{(k-1)}[V_{i_1}\cup \dots \cup V_{i_k}]\bigr)\ge d_k$.
Then $H$ contains a copy of $F$.
\end{theorem}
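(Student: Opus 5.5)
The plan is to derive the statement from the embedding lemma of~\cite{Embeddings} (Theorem~2 there), which is formulated for regular complexes. The work is in assembling, from the data of the statement, a complex of the shape that theorem requires, and in checking that the regularity notions agree. I will not reprove any counting statement from scratch.

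\textbf{Step 1: build an $(f,k)$-complex.} Define a $k$-th layer $H'^{(k)}$ by
\[
E(H'^{(k)}) := \bigcup_{\{i_1,\dots,i_k\}\in E(F)} E(H)\cap \mathcal K_k\bigl(H^{(k-1)}[V_{i_1}\cup\dots\cup V_{i_k}]\bigr).
\]
Then $\mathcal H'=\{H^{(1)},\dots,H^{(k-1)},H'^{(k)}\}$ is an $(f,k)$-complex: every $k$-edge of $H'^{(k)}$ lies in $\mathcal K_k(H^{(k-1)})$, so all of its $(k-1)$-subsets are already edges of the complex. For a $k$-set $\Lambda\in\binom{[f]}{k}$ that is not an edge of $F$, the restriction $H'^{(k)}[\Lambda]$ is empty. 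For $\Lambda\in E(F)$, the restriction is exactly $H$ restricted to the $K_k^{(k-1)}$'s of the polyad $H^{(k-1)}[\Lambda]$. By hypothesis it is therefore $(d_\Lambda,\delta_k,r)$-regular for some $d_\Lambda\ge d_k$.

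\textbf{Step 2: make the $k$-th layer density uniform.} The theorem in~\cite{Embeddings} is stated for a single density $d_k$ on each edge of the pattern. Where $d_\Lambda>d_k$, I would either invoke the version of the lemma that allows densities at least $d_k$, or pass to a random subgraph of $H'^{(k)}[\Lambda]$ keeping each edge with probability $d_k/d_\Lambda$. Chernoff bounds over the boundedly many relevant tuples are not directly available, since the $r$-tuples $\mathbf Q$ range over exponentially many choices. So the sparsification should instead be justified via the standard fact that regularity with respect to $r$-tuples survives random sparsification, or avoided altogether by citing the ``$\ge d_k$'' form. I expect the cited result already covers densities at least $d_k$, and I would check that first.

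\textbf{Step 3: verify the hierarchy of constants and apply the cited theorem.} Lower layers are $(\mathbf d,\delta,1)$-regular by assumption, with $1/d_i\in\mathbb N$ as~\cite{Embeddings} requires. The top layer is $(\delta_k,r)$-regular. The constants satisfy $n_1^{-1}\ll r^{-1},\delta\ll\min\{\delta_k,d_2,\dots,d_{k-1}\}\le\delta_k\ll d_k,1/f$, which is the hierarchy of Theorem~2 in~\cite{Embeddings}. That theorem then yields a partite copy of $F$ in $\mathcal H'$, with vertex $i$ in $V_i$, whose $k$-edges corresponding to $E(F)$ lie in $H'^{(k)}\subseteq H$. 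Hence $H$ contains a copy of $F$. In fact one gets $\Omega(n^f)$ copies, though only existence is needed.

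\textbf{Main obstacle.} The main obstacle is the bookkeeping that matches definitions. The notion of $(d,\delta,r)$-regularity here, with unions of $r$ subhypergraphs $\mathbf Q$ and relative density $d(\cdot\mid\mathbf Q)$, must coincide with that of~\cite{Embeddings,regularity-lemmas}. The cited theorem may also be phrased with $H$ regular with respect to the entire $(k-1)$-th layer rather than edge-by-edge on $E(F)$. I would handle the latter exactly as in Step~1: restricting to $E(F)$-indexed polyads is harmless because non-edges of $F$ impose no constraint.
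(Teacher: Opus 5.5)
Your proposal matches the paper, which gives no argument beyond calling this a direct consequence of Theorem~2 of~\cite{Embeddings}. Your Steps~1 and~3 are the routine bookkeeping that this citation leaves implicit.

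One correction to Step~2: the union bound over $r$-tuples $\mathbf Q$ does work. There are only $2^{O(rn^{k-1})}$ such tuples, while each relevant $\mathbf Q$ has $|\mathcal K_k(\mathbf Q)|=\Omega(n^k)$, so Chernoff's $e^{-\Omega(n^k)}$ failure bound wins. Hence random sparsification to exact density $d_k$ is available if the cited form requires it, at the cost of a constant-factor loss in $\delta_k$, which the constant hierarchy absorbs.
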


%%%%%%%%%%%%%%%%%%%%%%
\section{Single palette classification}\label{sec: proof of single palette classification}

In this section, we prove~\cref{thm: classification for single}. We begin with several preparatory lemmas. The first is the classical Erd\H{o}s--Szekeres theorem.

\begin{lemma}[\cite{ERDSZE1935}]\label{lem:Erdos-Szekeres}
Let $n_1$ and $n_2$ be positive integers. Any sequence of $(n_1-1)(n_2-1)+1$ distinct real numbers contains either an increasing subsequence of length $n_1$ or a decreasing subsequence of length $n_2$.
\end{lemma}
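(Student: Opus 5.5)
The statement is the classical Erd\H{o}s--Szekeres theorem. I will prove it by the standard labelling (pigeonhole) argument. Let $a_1,\dots,a_N$ be the sequence, where $N=(n_1-1)(n_2-1)+1$. To each index $i\in[N]$ assign the pair $(x_i,y_i)$ defined as follows:
\begin{itemize}
\item $x_i$ is the length of the longest increasing subsequence ending at $a_i$;
\item $y_i$ is the length of the longest decreasing subsequence ending at $a_i$.
\end{itemize}
Both are at least $1$, since $a_i$ alone is such a subsequence.

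The key step is to show that the map $i\mapsto (x_i,y_i)$ is injective. Take $i<j$. Since the terms are distinct, either $a_i<a_j$ or $a_i>a_j$.
\begin{itemize}
\item If $a_i<a_j$, appending $a_j$ to a longest increasing subsequence ending at $a_i$ gives $x_j\ge x_i+1$.
\item If $a_i>a_j$, the same argument with decreasing subsequences gives $y_j\ge y_i+1$.
\end{itemize}
In either case $(x_i,y_i)\neq (x_j,y_j)$.

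Now suppose, for contradiction, that there is no increasing subsequence of length $n_1$ and no decreasing subsequence of length $n_2$. Then every pair satisfies $x_i\in[n_1-1]$ and $y_i\in[n_2-1]$. So at most $(n_1-1)(n_2-1)$ distinct pairs are available. By injectivity this forces $N\le (n_1-1)(n_2-1)$, contradicting the choice of $N$.

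There is no real obstacle here. The only point needing care is that distinctness of the terms is used to guarantee that one of the two strict comparisons always holds. The degenerate case $n_1=1$ or $n_2=1$ is trivial, since then $N=1$ and a single term is a monotone subsequence of length $1$. The argument above also covers it automatically, because the range $[0]$ is empty.
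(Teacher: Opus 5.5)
Your proof is correct and complete. It is the standard pigeonhole argument, usually attributed to Seidenberg: you label each term by the pair of lengths of the longest increasing and the longest decreasing subsequence ending there, show the labelling is injective, and count.

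The paper does not prove this lemma; it only quotes it from Erd\H{o}s--Szekeres, so there is no competing argument to compare against. Your version supplies exactly what \cref{lm4.8} needs. There, the sequence is the natural labels of the vertices of an edge of $L$ listed in $\prec$-order. These labels are distinct integers, so your distinctness hypothesis holds automatically.

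The only step you leave implicit is that an increasing subsequence of length greater than $n_1$ contains one of length exactly $n_1$. This is what justifies $x_i\le n_1-1$ under the contrary assumption, and the same remark gives $y_i\le n_2-1$.
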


The following two lemmas form the core of the proof of~\cref{thm: classification for single}.

\begin{lemma}\label{lem:single order}
Given $k\ge 3$, let $\mathscr{P}=(\mathcal{C},\mathcal{T})$ and $\mathscr{P}_0=(\mathcal{C}_0,\mathcal{T}_0)$ be two $k$-palettes. If every ordered $k$-graph that is $\mathscr{P}$-colorable is also $\mathscr{P}_0$-colorable, then there exists a homomorphism from $\mathscr{P}$ to $\mathscr{P}_0$.
\end{lemma}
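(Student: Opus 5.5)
The plan is a compactness and Ramsey argument. We build one large ordered $k$-graph $H$ that carries a ``generic'' $\mathscr P$-coloring. We then apply the hypothesis to $H$ to get a $\mathscr P_0$-coloring $\psi$ for the same vertex order. Finally we use the hypergraph Ramsey theorem to make $\psi$ depend only on the $\mathscr P$-color of each $(k-1)$-set, which defines the homomorphism $f$.

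\emph{Step 1: encoding colors by vertex labels.} Fix a prime $q>\max\{k,|\mathcal C|\}$ and embed $\mathcal C\hookrightarrow\mathbb Z_q$; elements of $\mathbb Z_q$ outside the image are dummy colors lying in no admissible tuple. Let $H$ have vertex set $[N]\times\mathbb Z_q$, ordered lexicographically, so that each index $a\in[N]$ carries a block of $q$ consecutive vertices $(a,x)$. Call a $(k-1)$- or $k$-set \emph{crossing} if its vertices have distinct first coordinates. For a crossing $(k-1)$-set $S$, put $\varphi(S):=\sum_{(a,x)\in S}x\in\mathbb Z_q$, and color non-crossing sets arbitrarily. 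A crossing $k$-set $e=\{(a_1,x_1),\dots,(a_k,x_k)\}$ with $a_1<\dots<a_k$ is an edge of $H$ iff $(\varphi(e\setminus\{v_1\}),\dots,\varphi(e\setminus\{v_k\}))\in\mathcal T$. By construction the lexicographic order and $\varphi$ witness that $H$ is $\mathscr P$-colorable as an ordered $k$-graph.

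The point of the sum encoding is the following. Write $\sigma=\sum_i x_i$. Then $\varphi(e\setminus\{v_i\})=\sigma-x_i$, and the map $(x_i)\mapsto(\sigma-x_i)_i$ is given by $J-I$, whose determinant is $\pm(k-1)\not\equiv0\pmod q$. Hence for every $(c_1,\dots,c_k)\in\mathcal T$ and every choice of indices $a_1<\dots<a_k$, there is a unique label vector $(x_i)$ making that crossing $k$-set an edge realizing exactly $(c_1,\dots,c_k)$.

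\emph{Step 2: apply the hypothesis and homogenize.} The hypothesis gives $\psi:\binom{V(H)}{k-1}\to\mathcal C_0$ such that, in the lexicographic order, every edge of $H$ has its $\psi$-tuple in $\mathcal T_0$. Color each $(k-1)$-subset $\{a_1<\dots<a_{k-1}\}$ of $[N]$ by the function $g_{a}:\mathbb Z_q^{k-1}\to\mathcal C_0$ given by $(x_1,\dots,x_{k-1})\mapsto\psi(\{(a_i,x_i)\})$. There are at most $|\mathcal C_0|^{q^{k-1}}$ such colors. Taking $N\ge R_{k-1}(L,|\mathcal C_0|^{q^{k-1}})$ yields a set $A$ of $L$ indices on which this function is a single $g$, for an $L$ to be chosen in Step 3.

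\emph{Step 3: passing from $g$ to $f$ (main obstacle).} We want $f(c):=g(x)$ for any $x$ with $\sum x=c$. This is well defined only if $g$ depends solely on the sum of its argument, and that is the step I expect to be hardest, since Ramsey alone does not force it. The tuple $x$ realizing $(c_1,\dots,c_k)$ in Step 1 is unique, so we cannot simply choose a convenient one.

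The first fix I would try is to redesign the labels so that each coordinate is tied to the \emph{position} of its vertex inside the $k$-set. Let each vertex carry a label in $\mathbb Z_q^{k}$, so that vertex $v$ has coordinates $x^{(1)}(v),\dots,x^{(k)}(v)$. For an ordered crossing $(k-1)$-set $S=\{v_1<\dots<v_{k-1}\}$, set $\varphi(S):=\sum_j x^{(j)}(v_j)$, where coordinate $j$ of the $j$-th smallest vertex is used; this depends only on $S$. I would then redo the Ramsey step with the $\psi$-pattern recorded per label tuple, and use the large homogeneous set $A$ to compare two $(k-1)$-sets with equal $\varphi$-sum. Both can be placed inside a common ``grid'' of edges whose other members force equal $\psi$-values through the admissibility constraints of $\mathscr P_0$. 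If forcing by admissibility fails, the fallback is a product coloring. Choose $\varphi$ and the labels so that $g$ is constant on every fiber of $\varphi$, i.e.\ take labels indexed directly by $\mathcal C$ with $\varphi(S)$ read from a single designated vertex, together with an extra Ramsey pass over that vertex's rank. Then set $f(c)$ to the common value of $g$ on the fiber of $c$.

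Once $f$ is well defined, pick any $(c_1,\dots,c_k)\in\mathcal T$ and $k$ indices of $A$. Step 1 gives an edge $e$ of $H$ realizing this tuple. Its $\psi$-tuple is $(f(c_1),\dots,f(c_k))$, and this lies in $\mathcal T_0$ because $\psi$ is a valid $\mathscr P_0$-coloring. Hence $f$ is a homomorphism $\mathscr P\to\mathscr P_0$. Throughout, only the given order is used, which is why this lemma yields a homomorphism into $\mathscr P_0$ itself rather than into ${\rm rev}(\mathscr P_0)$. The reversal is dealt with separately, presumably via \cref{lem:Erdos-Szekeres}, in the unordered setting.
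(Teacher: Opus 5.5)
There is a genuine gap. Your Step~3 is not merely the hardest step: with the $H$ you build, it cannot be completed by any post-processing of $\psi$.

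The defect is that the color of a $(k-1)$-set is a function of data sitting on its vertices, and the $k$ faces of a $k$-set share those vertices. Color each crossing $(k-1)$-set by its label vector in $\mathbb Z_q^{k-1}$, listed in increasing order of first coordinates. This shows that $H$, in the lexicographic order, is $\mathscr P'$-colorable, where:
\begin{itemize}
\item $\mathscr P'$ has color set $\mathbb Z_q^{k-1}$;
\item its admissible tuples are $(x_{-1},\dots,x_{-k})$ for all $x\in\mathbb Z_q^k$ with $(\sigma-x_1,\dots,\sigma-x_k)\in\mathcal T$, where $x_{-i}$ is $x$ with its $i$-th entry deleted.
\end{itemize}
Your argument uses the hypothesis only through the existence of $\psi$ on this one $H$. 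So it would apply equally to $\mathscr P_0=\mathscr P'$ and output a homomorphism $\mathscr P\to\mathscr P'$, and in general there is none.

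Take $k=3$ and $\mathcal T=\{(1,2,3),(2,1,3)\}$, and suppose $f$ is such a homomorphism.
\begin{itemize}
\item Admissibility of $(f(1),f(2),f(3))$ gives some $x$ with $f(1)=(x_2,x_3)$, $f(2)=(x_1,x_3)$, $f(3)=(x_1,x_2)$, and with pattern $(x_2+x_3,x_1+x_3,x_1+x_2)\in\mathcal T$.
\item Admissibility of $(f(2),f(1),f(3))$ gives some $z$ with $f(2)=(z_2,z_3)$, $f(1)=(z_1,z_3)$, $f(3)=(z_1,z_2)$.
\item Comparing yields $x_1=z_2=x_2$. Then the pattern of $x$ has equal first two entries, which no tuple of $\mathcal T$ has.
\end{itemize}
By the lemma itself, some other ordered $\mathscr P$-colorable $k$-graph is not $\mathscr P'$-colorable; your $H$ simply does not witness it.

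This computation uses nothing about the sum encoding. It applies verbatim to any rule computing $\varphi(S)$ from the ordered labels of the vertices of $S$, including your position-indexed labels. Your fallback fails even earlier. If $\varphi(S)$ is read from the vertex of rank $r$ in $S$, then the face $e\setminus\{v_i\}$ reads $v_{r+1}$ for $i\le r$ and $v_r$ for $i>r$. So every edge pattern has the form $(c',\dots,c',c,\dots,c)$, and most $\mathcal T$ cannot even be realized.

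What is missing is independence between faces. The $\mathscr P$-colors of the $k$ faces of a $k$-set must be free to vary separately. Then, whatever $\psi$ is, each $(c_1,\dots,c_k)\in\mathcal T$ can be realized by a $k$-set whose $i$-th face has $\mathscr P$-color $c_i$ and $\psi$-color $f(c_i)$, for one fixed $f$.

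The paper gets this as follows:
\begin{itemize}
\item It colors $\binom{[N]}{k-1}$ uniformly at random, so every color class is dense on every large polyad.
\item It regularizes the $\mathcal C\times\mathcal C_0$-coloring with an ordered initial partition.
\item On each $(k-1)$-tuple of clusters it takes the majority map $h_I:\mathcal C\to\mathcal C_0$.
\item It uses Ramsey's theorem on $\binom{[R]}{k-1}$ to make $h_I$ a single map $h$ on some $k$ clusters.
\item It applies the embedding lemma to find a $K_k^{(k-1)}$ whose $i$-th face lies in $G_{x_i,h(x_i)}$.
\end{itemize}
A deterministic alternative would be the Ne\v{s}et\v{r}il--R\"odl theorem for ordered $(k-1)$-graphs whose edges carry colors from $\mathcal C$. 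You would apply it once per color to a finite gadget realizing every tuple of $\mathcal T$. Either way, some such genericity is indispensable, and label-based encodings cannot provide it.
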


\begin{proof}
Let $\mathscr{P}=(\mathcal{C}, \mathcal{T})$ and $\mathscr{P}_0=(\mathcal{C}_0, \mathcal{T}_0)$ be two $k$-palettes as in the statement. Set
\[
m:=|\mathcal C|\cdot |\mathcal C_0|,
\qquad
\ell:=|\mathcal C_0|^{|\mathcal C|},
\qquad
R:=R_{k-1}(k,\ell),
\qquad
d_{k-1}:=\frac{1}{2m}.
\] 
Choose auxiliary constants satisfying
\[
{1}/{n_1}\ll {1}/{r}, \delta \ll d_2,\dots,d_{k-2}\ll \delta_{k-1} \ll d_{k-1}, {1}/{k},
\]
where $1/d_i\in \mathbb N$ for all $i=2,\dots,k-2$, so that the assumptions of~\cref{E-lemma} are satisfied with $k$ replaced by $k-1$ and $F=K_k^{(k-1)}$. In addition, choose $\delta_{k-1}>0$ sufficiently small such that
\begin{equation}\label{eq:delta{k-1}-single}
2\sqrt{\delta_{k-1}}\,m\, R^{k-1}<1.
\end{equation}

Apply \cref{multicolor hypergraph regularity lemma} with parameters
$R, k-1, m,  \eta=1/R,  \delta_{k-1}, r, \delta$
to obtain integers $t_0$ and $n_0$. Next choose $\eps>0$ sufficiently small such that 
\begin{equation}\label{eq:eps-single}
\eps<\min\left\{\frac{1}{2},\ (1-\eta)t_0^{-(k-1)}\prod_{j=2}^{k-2} d_j^{\binom{k-1}{j}}\right\},
\end{equation}
and let $N\ge \max\{n_0,n_1\}$ be  large enough such that  $(R\cdot t_0!)\mid N$ and
\begin{equation}\label{eq:positive probability single-order}
1-2^{\binom{N}{k-2}}|\mathcal C|e^{-\frac{\varepsilon^3N^{k-1}}{2|\mathcal C|}}>0.
\end{equation}

Let $G$ be the complete $(k-1)$-graph on vertex set $[N]$. Color the edges of $G$ independently and uniformly at random with colors from $\mathcal C$. For any $(k-2)$-graph $G^{(k-2)}$ on $[N]$ with $|\mathcal K_{k-1}(G^{(k-2)})|\ge \varepsilon N^{k-1}$
and any color $x\in \mathcal C$, let $B_x(G^{(k-2)})$ denote the event that fewer than
\[
(1-\varepsilon)\frac{|\mathcal K_{k-1}(G^{(k-2)})|}{|\mathcal C|}
\]
members of $\mathcal K_{k-1}(G^{(k-2)})$ receive color $x$. By the Chernoff bound,
\[
\mathbb P(B_x(G^{(k-2)}))
\le
e^{-\frac{\varepsilon^2|\mathcal K_{k-1}(G^{(k-2)})|}{2|\mathcal C|}}
\le
e^{-\frac{\varepsilon^3N^{k-1}}{2|\mathcal C|}}.
\]
Since there are at most $2^{\binom{N}{k-2}}$ choices for $G^{(k-2)}$, the union bound together with \eqref{eq:positive probability single-order} implies that there exists a coloring $f:E(G)\to \mathcal C$
such that for every color $x\in \mathcal C$ and every $(k-2)$-graph $G^{(k-2)}$ on $[N]$ with $|\mathcal K_{k-1}(G^{(k-2)})|\ge \varepsilon N^{k-1}$, 
we have
\begin{equation}\label{eq:random-color-property}
|\mathcal K_{k-1}(G^{(k-2)})\cap E(G_x)|
\ge
(1-\varepsilon)\frac{|\mathcal K_{k-1}(G^{(k-2)})|}{|\mathcal C|},
\end{equation}
where $G_x$ denotes the spanning subgraph of $G$ consisting of all edges colored $x$.

Now define a $k$-graph $H$ on $[N]$ by declaring that for every
\[
e=\llbracket i_1,i_2,\dots,i_k\rrbracket\in \binom{[N]}{k},
\]
we have $e\in E(H)$ if and only if
\[
\bigl(f(e\setminus\{i_1\}),f(e\setminus\{i_2\}),\dots,f(e\setminus\{i_k\})\bigr)\in \mathcal T.
\]
By construction, the ordered $k$-graph $H$ with the natural order  is $\mathscr P$-colorable. By assumption, $H$ is also $\mathscr{P}_0$-colorable and let $f_0: E(G)\to \mathcal C_0$ be a coloring witnessing this property. 

For each $(x,x_0)\in \mathcal C\times \mathcal C_0$, let $G_{x,x_0}$ denote the spanning subgraph of $G$ with edge set
\[
E(G_{x,x_0})=\{e\in E(G): f(e)=x,\ f_0(e)=x_0\}.
\]
Let $[N]=U_1\cup \cdots \cup U_{R}$
be an ordered equipartition such that $\max\{u: u\in U_i\}<
\min\{u: u\in U_j\}$ for $1\le i<j\le R$.
Apply~\cref{multicolor hypergraph regularity lemma} to the $(\mathcal C\times \mathcal C_0)$-colored $(k-1)$-graph $G$ with initial partition $U_1\cup\cdots\cup U_{R}$. We obtain a family of partitions
\[
\bm{\mathcal P}=\bm{\mathcal P}(k-2,\mathbf a),
\]
where $a_i=1/d_i$ for $i=2,\dots,k-2$ and $1/\eta<a_1<t_0$, such that
\begin{enumerate}
\item $\bm{\mathcal P}$ is $(\eta,\delta(\mathbf a),t_0)$-equitable and $\mathcal P^{(1)}$ refines $U_1\cup\cdots\cup U_{R}$;
\item for each $(x,x_0)\in \mathcal C\times\mathcal C_0$, the $(k-1)$-graph $G_{x,x_0}$ is $(\delta_{k-1},r(\mathbf a))$-regular with respect to $\bm{\mathcal P}$.
\end{enumerate}

Recall that the elements of $\mathcal P^{(1)}$ are called clusters. We say that a $(k-1)$-tuple of clusters is \emph{good} for $(x,x_0)\in \mathcal C\times \mathcal C_0$, if $G_{x,x_0}$ is $(\delta_{k-1},r(\mathbf a))$-regular with respect to all but at most a $\sqrt{\delta_{k-1}}$-fraction of the polyads induced on this $(k-1)$-tuple of clusters. A $(k-1)$-tuple of clusters is called \emph{all-good} if it is good for every $(x,x_0)\in \mathcal C\times \mathcal C_0$.

\begin{claim}\label{claim-single}
There exist clusters $V_1,\dots,V_{R}\in \mathcal P^{(1)}$ such that $V_i\subseteq U_i$ for every $i\in[R]$ and all $(k-1)$-tuples among $V_1,\dots,V_{R}$ are all-good.
\end{claim}

\begin{proof}
Fix $(x,x_0)\in \mathcal C\times \mathcal C_0$. By condition~\ref{p4} of \cref{def-eq-partition}, for every $J\in \mathrm{Cross}_{k-1}$, we have
\[
|\mathcal K_{k-1}(\hat P_J^{(k-2)})|
=
(1\pm \eta)\prod_{j=1}^{k-2}\left(\frac{1}{a_j}\right)^{\binom{k-1}{j}}N^{k-1}.
\]
Since $G_{x,x_0}$ is $(\delta_{k-1},r(\mathbf a))$-regular with respect to $\bm{\mathcal P}$, the number of polyads $\hat P^{(k-2)}$ with respect to which $G_{x,x_0}$ is not $(\delta_{k-1},r(\mathbf a))$-regular is at most
\[
\frac{\delta_{k-1}N^{k-1}}
{(1-\eta)\prod_{j=1}^{k-2}\left(\frac{1}{a_j}\right)^{\binom{k-1}{j}}N^{k-1}}
\le
2\delta_{k-1}\prod_{j=1}^{k-2}a_j^{\binom{k-1}{j}}.
\]
Each $(k-1)$-tuple of clusters induces exactly
$\prod_{j=2}^{k-2}a_j^{\binom{k-1}{j}}$
polyads $\hat P^{(k-2)}$. Hence all but at most
$2\sqrt{\delta_{k-1}}\,a_1^{k-1}$ 
$(k-1)$-tuples of clusters are good for $(x,x_0)$. Summing over all $(x,x_0)\in \mathcal C\times \mathcal C_0$, we conclude that all but at most
\[
2\sqrt{\delta_{k-1}}\,m\,a_1^{k-1}
\]
$(k-1)$-tuples of clusters are all-good.

Since $\mathcal P^{(1)}$ refines the partition $U_1\cup\cdots\cup U_{R}$,  the number of ways to choose one cluster $V_i\subseteq U_i$ for each $i\in [R]$
is $(a_1/R)^{R}$. If every choice of $V_i$ yielded one $(k-1)$-tuple that is not all-good, then by double counting the number of not all-good $(k-1)$-tuples would be at least
\[
\frac{(a_1/R)^{R}}{(a_1/R)^{R-k+1}}
>
2\sqrt{\delta_{k-1}}\,m\,a_1^{k-1},
\]
which follows from $2\sqrt{\delta_{k-1}}\,m\,R^{k-1}<1$, a contradiction. 
\end{proof}

Now fix a choice of $V_1,V_2,\dots,V_{R}$ satisfying \cref{claim-single}. We next choose one “good” polyad for all $(k-1)$-tuples of these clusters in a consistent way.
\begin{claim}\label{claim:choice of polyads}
Assume that the clusters $V_1,\dots,V_R$ have been chosen as in \cref{claim-single}.
Then one can choose an $(R, k-2)$-complex $\{Q^{(i)}\}_{i=1}^{k-2}$ with clusters $V_1,\dots,V_R$ such that the following hold.
\stepcounter{propcounter}
\begin{enumerate}[label = {{\rm (\Alph{propcounter}\arabic{enumi})}}]
\item \label{polyad-1} $Q^{(1)}[V_{\lambda}]=V_{\lambda}$ for each $\lambda\in [R]$.
\item \label{polyad-2} For every $2\le i\le k-2$ and every $\Lambda_i\in \binom{[R]}{i}$, the restriction $Q^{(i)}_{\Lambda_i}:=Q^{(i)}[\cup_{\lambda\in \Lambda_i }V_{\lambda}]$ is a $i$-cell.
Moreover, these cells are chosen consistently: for every
$2\le i\le k-3$ and every $\Lambda_{i+1}\in\binom{[R]}{i+1}$, $Q_{\Lambda_{i+1}}^{(i+1)}
\subseteq
\mathcal K_{i+1}\bigl(\hat Q_{\Lambda_{i+1}}^{(i)}\bigr)$.

\item \label{polyad-3} For every $I\in \binom{[R]}{k-1}$, the union 
\[\hat Q_{I}^{(k-2)}
:=
\bigcup\Bigl\{Q_{\Lambda_{k-2}}^{(k-2)}: \Lambda_{k-2}\in \binom{I}{k-2}\Bigr\}
\]
is a polyad on the $(k-1)$-tuple of clusters indexed by $I$.

\item \label{polyad-4} For every $I\in \binom{[R]}{k-1}$
and each pair $(x,x_0)\in \mathcal C\times \mathcal C_0$,
the $(k-1)$-graph $G_{x,x_0}$ is $(\delta_{k-1},r(\mathbf a))$-regular with respect to $\hat Q_I^{(k-2)}$.
\end{enumerate}
\end{claim}

\begin{proof}
First define $Q^{(1)}$ to be the $1$-graph with vertex set
$V_1\cup\cdots\cup V_R$.
Thus, condition~\ref{polyad-1} holds.
For $k=3$, there are no higher-level cells to choose. 
For every $I=\{\lambda_1,\lambda_2\}\in\binom{[R]}{2}$, $\mathcal K_2(\hat Q_I^{(1)})$ is the complete bipartite graph between $V_{\lambda_1}$ and $V_{\lambda_2}$. Since every pair among $V_1,\dots,V_R$ is all-good by \cref{claim-single}, it follows that for each $I\in\binom{[R]}{2}$ and $(x,x_0)\in\mathcal C\times\mathcal C_0$,
the graph $G_{x,x_0}$ is $(\delta_{k-1},r(\mathbf a))$-regular with respect to $\hat Q_I^{(1)}$. 

Assume now that $k\ge 4$. We construct the desired complex level by level.
For every $\Lambda_2\in\binom{[R]}{2}$,
choose uniformly and independently one $2$-cell of $\mathcal P^{(2)}$ between the two clusters indexed by $\Lambda_2$. Denote this chosen $2$-cell by $Q_{\Lambda_2}^{(2)}$
and let $Q^{(2)}$ be the union of all these chosen $2$-cells.
Suppose inductively that $Q^{(i)}$ has already been chosen for some $2\le i\le k-3$.
For each $\Lambda_{i+1}\in\binom{[R]}{i+1}$,
the union
\[
\hat Q_{\Lambda_{i+1}}^{(i)}
:=
\bigcup\Bigl\{
Q_{\Lambda_i}^{(i)}:\Lambda_i\in\binom{\Lambda_{i+1}}{i}
\Bigr\}
\]
is an $i$-polyad on the $(i+1)$-tuple of clusters indexed by $\Lambda_{i+1}$. Choose uniformly and independently one $(i+1)$-cell of $\mathcal P^{(i+1)}$ contained in $\mathcal K_{i+1}\bigl(\hat Q_{\Lambda_{i+1}}^{(i)}\bigr)$ and denote it by $Q_{\Lambda_{i+1}}^{(i+1)}$.
Let $Q^{(i+1)}$ be the union of all these chosen $(i+1)$-cells.
Proceeding inductively up to level $k-2$, we obtain an $(R,k-2)$-complex
$\{Q^{(i)}\}_{i=1}^{k-2}$
with clusters $V_1,\dots,V_R$. By construction, conditions~\ref{polyad-1}-\ref{polyad-3} hold. 

It remains to prove condition~\ref{polyad-4}. For each $I\in\binom{[R]}{k-1}$, the above random procedure makes $\hat Q_I^{(k-2)}$ a uniformly random polyad among all polyads induced by $\bm{\mathcal P}$ on the $(k-1)$-tuple of clusters indexed by $I$. Since the clusters $V_1,\dots,V_R$ were chosen as in \cref{claim-single}, every $(k-1)$-tuple among them is all-good. Therefore, for each fixed 
\[
I\in\binom{[R]}{k-1}
\qquad\text{and}\qquad
(x,x_0)\in\mathcal C\times\mathcal C_0,
\]
the probability that
$G_{x,x_0}$
is not $(\delta_{k-1},r(\mathbf a))$-regular with respect to $\hat Q_I^{(k-2)}$ is at most
$\sqrt{\delta_{k-1}}$.
By the union bound, the probability that there exists some $(x,x_0)\in\mathcal C\times\mathcal C_0$ and some $I\in\binom{[R]}{k-1}$ such that $G_{x,x_0}$  is not $(\delta_{k-1},r(\mathbf a))$-regular with respect to $\hat Q_I^{(k-2)}$ is at most 
\[
\binom{R}{k-1}m\sqrt{\delta_{k-1}}
<1,
\]
where the last inequality follows 
from~\eqref{eq:delta{k-1}-single}.
Therefore we may fix a choice of the complex $\{Q^{(i)}\}_{i=2}^{k-2}$ on $V_1\cup V_2\cup\dots\cup V_{R}$ as desired.
\end{proof}

Next, fix an $(R, k-2)$-complex $\{Q^{(i)}\}_{i=1}^{k-2}$ with clusters $V_1,\dots,V_R$ satisfying \cref{claim:choice of polyads}.
For each $I\in \binom{[R]}{k-1}$, define a function
\[
h_I:\mathcal C\to \mathcal C_0
\]
as follows: for each $x\in\mathcal C$, let $h_I(x)$ be a color in $\mathcal C_0$ maximizing
\[
\bigl|\mathcal K_{k-1}(\hat Q_I^{(k-2)})\cap E(G_{x,x_0})\bigr|
\]
over all $x_0\in\mathcal C_0$.
Since there are at most $\ell$ possible such functions and $R=R_{k-1}(k,\ell)$, there exists a $k$-set of indices whose every $(k-1)$-subset receives the same color function, denoted by
\[
h:\mathcal C\to \mathcal C_0.
\]
Without loss of generality, we may assume that this $k$-set of indices is $[k]$. We claim that $h$ is a homomorphism from $\mathscr P$ to $\mathscr P_0$.

Let $(x_1,\dots,x_k)\in\mathcal T$. For each $i\in[k]$, set
\[
I_i:=[k]\setminus\{i\}.
\]
By condition~\ref{p4} of \cref{def-eq-partition}, we have
\[
|\mathcal K_{k-1}(\hat Q_{I_i}^{(k-2)})|
\ge
(1-\eta)\prod_{j=1}^{k-2}\left(\frac1{a_j}\right)^{\binom{k-1}{j}}N^{k-1}\ge
\varepsilon N^{k-1},
\]
where the last inequality follows from  \eqref{eq:eps-single}.
Therefore, by \eqref{eq:random-color-property}, for every $i\in[k]$ and every $x\in\mathcal C$,
\[
|\mathcal K_{k-1}(\hat Q_{I_i}^{(k-2)})\cap E(G_x)|
\ge
(1-\eps)\frac{|\mathcal K_{k-1}(\hat Q_{I_i}^{(k-2)})|}{|\mathcal C|}.
\] 
Since $h_{I_i}(x)=h(x)$ is chosen as the majority $f_0$-color among those members of $\mathcal K_{k-1}(\hat Q_{I_i}^{(k-2)})$ whose $f$-color is $x$, we obtain
\[
d\bigl(G_{x,h(x)}\mid \hat Q_{I_i}^{(k-2)}\bigr)
\ge
\frac{1-\varepsilon}{|\mathcal C||\mathcal C_0|}
=
\frac{1-\varepsilon}{m}
\ge
d_{k-1}.
\]
Moreover, by~\cref{claim:choice of polyads}, for every $i\in[k]$, the $(k-1)$-graph $G_{x_i,h(x_i)}$ is $(\delta_{k-1},r(\mathbf a))$-regular with respect to $\hat Q_{I_i}^{(k-2)}$.

Now consider the $(k-1)$-graph
\[
G':=\bigcup_{i=1}^k G_{x_i,h(x_i)}\Bigl[\bigcup_{\lambda\in I_i} V_\lambda\Bigr].
\]
By construction, the complex
\[
\{Q^{(i)}[V_1\cup\cdots\cup V_k]\}_{i=1}^{k-2}
\]
together with the $(k-1)$-graph $G'$ satisfies the assumptions of \cref{E-lemma} for the complete $(k-1)$-graph $K_k^{(k-1)}$. Hence $G'$ contains a copy of $K_k^{(k-1)}$ on the clusters $V_1,\dots,V_k$.

Let this copy correspond to vertices $v_1\in V_1,\dots,v_k\in V_k$. Since $V_i\subseteq U_i$ for each $i\in[k]$ and the partition $U_1\cup\cdots\cup U_{R}$ is ordered, we have
\[
v_1< v_2<\cdots < v_k.
\]
For each $i\in[k]$, the $(k-1)$-set $\{v_1,\dots,v_k\}\setminus\{v_i\}$
belongs to $G_{x_i,h(x_i)}$. Because $(x_1,\dots,x_k)\in\mathcal T$, the $k$-set $\{v_1,\dots,v_k\}$ is an edge of $H$. Since $f_0$ witnesses that $H$ is $\mathscr P_0$-colorable, it follows that
\[
(h(x_1),h(x_2),\dots,h(x_k))\in \mathcal T_0.
\]
Therefore $h$ is a homomorphism from $\mathscr P$ to $\mathscr P_0$, as required.
\end{proof}

Recall that a hypergraph is \emph{linear} if any two edges have at most one vertex in common.

\begin{lemma}\label{lm4.8}
Given $k\ge 3$, let $\mathscr P$ and $\mathscr P_0$ be two $k$-palettes. If there exist ordered $k$-graphs $H^+$ and $H^-$ such that
\begin{itemize}
\item $H^+$ and $H^-$ are $\mathscr P$-colorable,
\item $H^+$ is not $\mathscr P_0$-colorable, and
\item $H^-$ is not ${\rm rev}(\mathscr{P}_0)$-colorable.
\end{itemize}
Then, there exists a $k$-graph $H$ that is $\mathscr P$-colorable but not $\mathscr P_0$-colorable.
\end{lemma}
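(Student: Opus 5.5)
The plan is a random palette construction on a large vertex set, in the spirit of the graph $H$ built in the proof of \cref{lem:single order}. Any linear order on its vertex set will, by \cref{lem:Erdos-Szekeres}, contain a long monotone subsequence with respect to the natural order. Inside it we will find either an order-preserving copy of $H^+$ or an order-reversing copy of $H^-$, and either one is an obstruction to $\mathscr P_0$-colorability.

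\emph{Step 1: the construction.} Write $\mathscr P=(\mathcal C,\mathcal T)$ and fix $\mathscr P$-colorings $\varphi^+,\varphi^-$ witnessing that the ordered $k$-graphs $H^+,H^-$ are $\mathscr P$-colorable. Let $n:=\max\{|V(H^+)|,|V(H^-)|\}$, take $N$ large and set $L:=\lceil\sqrt N\,\rceil$, so that $(L-1)^2+1\le N$. Color every $(k-1)$-subset of $[N]$ independently and uniformly at random by $f:\binom{[N]}{k-1}\to\mathcal C$. Let $H$ be the $k$-graph on $[N]$ in which $\llbracket i_1,\dots,i_k\rrbracket$ is an edge if and only if $(f(e\setminus\{i_1\}),\dots,f(e\setminus\{i_k\}))\in\mathcal T$. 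With the natural order and the coloring $f$, $H$ is $\mathscr P$-colorable by construction.

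\emph{Step 2: the probabilistic claim.} With positive probability, every $L$-subset $S\subseteq[N]$ contains two configurations. The first is an increasing injection $g^+:V(H^+)\to S$ (increasing meaning it respects $\prec_{H^+}$ and the natural order) such that $f(g^+(A))=\varphi^+(A)$ for all $A\in\partial H^+$. The second is an increasing injection $g^-:V(H^-)\to S$ with the analogous property for $\varphi^-$.

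Whenever $f$ agrees with $\varphi^+$ in this way, every edge of $H^+$ is mapped to a $k$-set whose ordered color pattern lies in $\mathcal T$. Hence $g^+$ is an order-preserving embedding of $H^+$ as a (not necessarily induced) subgraph of $H$, and likewise for $g^-$.

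To prove the claim, fix $S$ and apply Janson's inequality to the family of increasing injections into $S$. The expected number of good injections is $\Theta(L^{n})$, since each has probability $|\mathcal C|^{-|\partial H^+|}$. Two injections are dependent only if their images share at least one $(k-1)$-set. Bounding the Janson correlation term $\Delta$ by summing over the size $j\ge k-1$ of the overlap gives a failure probability of at most $\exp(-c L^{k-1})$ for some $c=c(n,|\mathcal C|,k)>0$. Since $k\ge3$, we have $L^{k-1}\ge N$. This beats the union bound over the at most $\binom NL\le \exp(\sqrt N\log N)$ choices of $S$, and over the two graphs $H^\pm$.

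This Janson computation is the step I expect to be the main obstacle. One must check that the worst dependency term really scales as $L^{2n-(k-1)}$ and is not larger; overlaps on many vertices carry fewer copies but more shared colors. If this bookkeeping becomes unpleasant, the fallback is to first restrict to a family of injections pairwise sharing at most $k-2$ vertices, which are then fully independent, and to apply a plain Chernoff bound to that family.

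\emph{Step 3: conclusion.} Suppose, for contradiction, that $H$ is $\mathscr P_0$-colorable via some order $\prec$ and coloring $f_0$. Listing $[N]$ according to $\prec$, \cref{lem:Erdos-Szekeres} gives a set $S$ of size $L$ on which $\prec$ either agrees with or reverses the natural order.

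In the first case, restrict $f_0$ to the image of $g^+$. The edges of $H^+$ are edges of $H$, and $\prec$ restricted to the image is $\prec_{H^+}$, so this restriction would $\mathscr P_0$-color the ordered graph $H^+$, a contradiction.

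In the second case, the image of $g^-$ carries under $\prec$ the reverse of $\prec_{H^-}$. Reading the $\mathscr P_0$-coloring condition in the reversed order shows that $f_0$ restricted to this image would $\mathrm{rev}(\mathscr P_0)$-color the ordered graph $H^-$, again a contradiction. Hence $H$ is $\mathscr P$-colorable but not $\mathscr P_0$-colorable.
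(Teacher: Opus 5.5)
Your proof is correct once you make your fallback the main argument. The Janson step, as written, is not justified, and the obstruction is not the size of $\Delta$.

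Janson's inequality concerns increasing events in a product of independent indicators. Its proof uses the Harris--FKG inequality to get $\Pr(B_{g'}\mid B_g\cap D)\le\Pr(B_{g'}\mid B_g)$ when $D$ is an intersection of complements. Your events $B_g=\{f(g(A))=\varphi^+(A)\text{ for all }A\in\partial H^+\}$ instead prescribe values of $|\mathcal C|$-valued coordinates. Two increasing injections typically prescribe \emph{different} colors on a common $(k-1)$-subset of $S$. Then $B_{g'}\subseteq\overline{B_{g''}}$, so conditioning on $\overline{B_{g''}}$ increases $\Pr(B_{g'})$, and the correlation step fails. A Suen-type inequality, which needs only a dependency graph rather than monotonicity, would rescue this route, since $\mu^2/\Delta$ and $\mu/\max_g\sum_{g'\sim g}\Pr(B_{g'})$ are both of order $L^{k-1}$.

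Your fallback is simpler and complete:
\begin{itemize}
\item Greedily choose $M\ge\binom{L}{k-1}/\binom{n}{k-1}^2$ subsets of $S$ of size $|V(H^+)|$ that pairwise meet in at most $k-2$ vertices.
\item Their $(k-1)$-shadows are disjoint, so the events that the unique increasing injection onto each of them matches $\varphi^+$ are mutually independent. Each has probability at least $p:=|\mathcal C|^{-\binom{n}{k-1}}$.
\item Hence $S$ fails for $H^+$ with probability at most $(1-p)^M\le e^{-cpN}$ for some $c=c(n,k)>0$, using $L^{k-1}\ge L^2\ge N$. No Chernoff bound is needed.
\item The same holds for $H^-$, and $e^{-cpN}$ beats $\binom{N}{L}\le e^{(\sqrt N+1)\log N}$.
\end{itemize}
Step~3 is then fine.

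Your route genuinely differs from the paper's in the order of quantifiers. The paper first fixes an arbitrary order $\prec$ and packs $[N]$ with a linear $m$-graph, $m=(n_1-1)(n_2-1)+1$, having $\Omega(N^2)$ edges. Inside each edge it applies \cref{lem:Erdos-Szekeres} to locate a monotone subset, on which the random coloring plants an ordered copy of $H^+$ or ${\rm rev}(H^-)$ with probability at least $p$, independently across edges. It then union-bounds $(1-p)^{\Omega(N^2)}$ over all $N!$ orders. You instead prove a single order-free pseudorandom property: every $\lceil\sqrt N\rceil$-subset contains increasing color-matched copies of both $H^+$ and $H^-$. You union-bound over $\binom{N}{L}$ sets and apply Erd\H{o}s--Szekeres once, globally and deterministically.

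The independence mechanism is the same in both: packings whose members share no $(k-1)$-set, which is where $k\ge3$ enters. Your version gives a cleaner structural certificate and a much smaller union bound. The paper's version keeps everything at constant scale and is the template it reuses in \cref{thm6.dierorder}, where the Fishburn--Graham lemma replaces Erd\H{o}s--Szekeres inside each packed set.
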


\begin{proof}
Let $\mathscr P=(\mathcal C,\mathcal T)$ and $\mathscr P_0=(\mathcal C_0,\mathcal T_0)$ be two $k$-palettes as in the statement, and let $H^+$ and $H^-$ be the corresponding ordered $k$-graphs. 
Set
\[
n_1:=|V(H^+)|,
\qquad
n_2:=|V(H^-)|,
\qquad
n:=\max\{n_1,n_2\},
\qquad
m:=(n_1-1)(n_2-1)+1,
\]
and
\[
p:=|\mathcal C|^{-n^{k-1}}.
\]
Choose $N$ sufficiently large such that
\begin{equation}\label{eq:random-single-palette}
N!\cdot \left(1-p\right)^{N^2/(m^2(m-1)^2)}<1.
\end{equation}
Such an $N$ exists because $p>0$ is fixed, $N!\le N^N=e^{N\log N}$
and 
$(1-p)^{N^2/(m^2(m-1)^2)}\le e^{-cN^2}$
for some constant $c=c(p,m)>0$.

Now define a random $k$-graph $H$ on $[N]$ as follows. Choose a coloring
\[
f:\binom{[N]}{k-1}\to\mathcal C
\]
uniformly at random. For every $e=\llbracket i_1,i_2,\dots,i_k\rrbracket\in \binom{[N]}{k}$,
we choose $e\in E(H)$ if and only if
\[
\bigl(f(e\setminus\{i_1\}),f(e\setminus\{i_2\}),\dots,f(e\setminus\{i_k\})\bigr)\in\mathcal T.
\]
Then $H$ is $\mathscr P$-colorable by construction. Next, we shall prove that with positive probability $H$ is not $\mathscr P_0$-colorable

Let $L$ be a linear $m$-graph on $[N]$ with the maximum possible number of edges. Since an $m$-edge can intersect fewer than
\[
\binom{m}{2}\binom{N-2}{m-2}
\]
other $m$-edges in the complete $m$-graph on $[N]$, we have
\[
|E(L)|\ge
\frac{\binom{N}{m}}{\binom{m}{2}\binom{N-2}{m-2}}
=
\frac{2N(N-1)}{m^2(m-1)^2}
\ge
\frac{N^2}{m^2(m-1)^2}.
\]
Enumerate the edges of $L$ as $e_1,e_2,\dots,e_{|E(L)|}$.

Fix any linear order $\prec$ on $V(H)$. By~\cref{lem:Erdos-Szekeres}, for each $j\in[|E(L)|]$, $e_j$ contains either
\begin{itemize}
\item an $n_1$-subset $S_j$ whose natural order and $\prec$-order coincide, or
\item an $n_2$-subset $T_j$ whose natural order and $\prec$-order are opposite.
\end{itemize}

Suppose first that $e_j$ contains such an $n_1$-subset $S_j$. Since $H^+$ is $\mathscr P$-colorable, there exist a natural order
$u_1< \cdots< u_{n_1}$
of $V(H^+)$ and a witness coloring
\[
\varphi_1:\binom{V(H^+)}{k-1}\to\mathcal C.
\]
Identify $u_1,\dots,u_{n_1}$ with the vertices of $S_j$ in their common natural order and $\prec$-order. If, for every $X\in\binom{S_j}{k-1}$,
the random color $f(X)$ equals the corresponding value prescribed by $\varphi_1$, then every edge of $H^+$ appears in the ordered $k$-graph $H^\prec[S_j]$. The probability of this event is at least
\[
|\mathcal C|^{-\binom{n_1}{k-1}}
\ge
|\mathcal C|^{-n^{k-1}}
=
p.
\]
Hence, with probability at least $p$, the ordered $k$-graph $H^\prec[S_j]$ contains an ordered copy of $H^+$. Since $H^+$ is not $\mathscr P_0$-colorable, this implies that $H^\prec[e_j]$ is not $\mathscr P_0$-colorable.
Now suppose that $e_j$ contains an $n_2$-subset $T_j$ whose natural order and $\prec$-order are opposite. Let
$\rm{rev}(H^{-})$
denote the ordered $k$-graph obtained from $H^{-}$ by reversing its linear order. Similar to the first case, since the natural order on $T_j$ is the reverse of the  $\prec$-order, $H^\prec[T_j]$ contains an ordered copy of $\rm{rev}(H^{-})$ with probability at least $p$.
Since $H^{-}$ is not ${\rm rev}(\mathscr P_0)$-colorable, the ordered $k$-graph $\rm{rev}(H^{-})$ is not $\mathscr P_0$-colorable, which implies that 
$H^\prec[e_j]$ is not $\mathscr P_0$-colorable with probability at least $p$.
Therefore, for every $j\in[|E(L)|]$, we have
\[
\mathbb P\bigl(H^\prec[e_j]\text{ is }\mathscr P_0\text{-colorable}\bigr)\le 1-p.
\]

Since $L$ is linear and $k\ge 3$, any two distinct edges of $L$ share at most one vertex, and hence no $(k-1)$-subset of vertices is contained in two different edges of $L$. Therefore, the events
\[
\bigl\{H^\prec[e_j]\text{ is }\mathscr P_0\text{-colorable}\bigr\}_{j=1}^{|E(L)|}
\]
are independent.

If $k$-graph $H$ is $\mathscr P_0$-colorable, then there exists some linear order $\prec$ on $[N]$ such that the ordered $k$-graph $H^\prec$ is $\mathscr P_0$-colorable. In particular, for this order $\prec$, every restriction $H^\prec[e_j]$ would be $\mathscr P_0$-colorable. Hence, by independence,
\[
\mathbb P\bigl(H^\prec\text{ is }\mathscr P_0\text{-colorable}\bigr)
\le
(1-p)^{|E(L)|}.
\]
Taking the union bound over all $N!$ linear orders, we obtain
\[
\mathbb P\bigl(H\text{ is }\mathscr P_0\text{-colorable}\bigr)
\le
N!\cdot (1-p)^{|E(L)|}
\le
N!\cdot (1-p)^{N^2/(m^2(m-1)^2)}
<1
\]
by \eqref{eq:random-single-palette}. Therefore, with positive probability, the random $k$-graph $H$ is $\mathscr P$-colorable but not $\mathscr P_0$-colorable.
\end{proof}

Finally, we use \cref{lem:single order} and \cref{lm4.8} to prove \cref{thm: classification for single}.

\begin{proof}[Proof of \cref{thm: classification for single}]
Suppose first that there exists a $k$-graph $H$ that is $\mathscr P$-colorable but not $\mathscr P_0$-colorable. By the basic property of palette homomorphisms, there is no homomorphism from $\mathscr P$ to $\mathscr P_0$. Since every $k$-graph that is ${\rm rev}(\mathscr P_0)$-colorable becomes $\mathscr P_0$-colorable after reversing the vertex order, there is also no homomorphism from $\mathscr P$ to ${\rm rev}(\mathscr P_0)$.

Conversely, suppose that there is no homomorphism from $\mathscr P$ to $\mathscr P_0$ and no homomorphism from $\mathscr P$ to ${\rm rev}(\mathscr P_0)$. By \cref{lem:single order}, there exists an ordered $k$-graph $H_1$ that is $\mathscr P$-colorable but not $\mathscr P_0$-colorable, and an ordered $k$-graph $H_2$ that is $\mathscr P$-colorable but not ${\rm rev}(\mathscr P_0)$-colorable. Applying \cref{lm4.8}, we obtain a $k$-graph $H$ that is $\mathscr P$-colorable but not $\mathscr P_0$-colorable.
\end{proof}

%%%%%%%%%%%%%%%%%%%%%%%%
\section{More general palette classification}\label{sec: proof of multi-palette classification}

In this section, we extend the single-palette classification from~\cref{thm: classification for single} to finite families of palettes; see~\cref{thm: classification for multicase}. 
This extension is useful for the applications developed later. Indeed, when one aims to show that a prescribed value $\alpha\in(0,1)$ belongs to $\Pi^{(k)}_{\rm u}$, it is often more convenient to construct a $k$-graph that is simultaneously colorable by finitely many $k$-palettes $\mathscr P_1,\dots,\mathscr P_t$ than to work with a single specially chosen $k$-palette. In such a situation, once one finds a palette $\mathscr P_0$ of density $\alpha$ that does not color the $k$-graph, the upper-bound argument reduces to showing that every $k$-palette $\mathscr P$ with $d(\mathscr P)>\alpha$ contains at least one of $\mathscr P_1,\dots,\mathscr P_t$ as a subpalette, that is, there exists a homomorphism from some $\mathscr P_i$ to $\mathscr P$. Thus, the multi-palette setting provides a more flexible framework for certifying realizable values of $(k-2)$-uniform Tur\'an density.

The passage from one palette to finitely many palettes is, however, not merely formal. 
The first natural operation in this setting is the product of $k$-palettes, which encodes simultaneous colorability when the underlying vertex order is fixed.

\begin{definition}[Product of palettes]
Given $k \ge 3$ and $t \ge 2$, let $\mathscr P_i = (\mathcal C_i, \mathcal T_i)$ be a $k$-palette for each $i \in [t]$. Their \emph{product} is the $k$-palette
\[
\prod_{i=1}^t \mathscr P_i = \left( \prod_{i=1}^t \mathcal C_i,  \mathcal T^{\times} \right),
\]
where $\mathcal T^{\times}$ consists of all $k$-tuples
\[
\left( (x_{1,1}, x_{2,1}, \dots, x_{t,1}), (x_{1,2}, x_{2,2}, \dots, x_{t,2}), \dots, (x_{1,k}, x_{2,k}, \dots, x_{t,k}) \right)
\]
such that $(x_{i,1}, x_{i,2}, \dots, x_{i,k}) \in \mathcal T_i$ for every $i \in [t]$.
% Furthermore, given $\mathbf x_{1,[k]} \circ \mathbf x_{2,[k]} \circ \dots \circ \mathbf x_{t,[k]}$, we define the \emph{$i$-th column} $x_{[t],i}$ for each $i \in [k]$ by
% \[
% x_{[t],i} = (x_{1,i}, x_{2,i}, \dots, x_{t,i}).
% \]
\end{definition}

Note that  
an ordered $k$-graph $H^{\prec}$ is $\mathscr P_i$-colorable for every $i\in[t]$ with respect to the same linear order $\prec$ if and only if $H^{\prec}$ is $\prod_{i=1}^t \mathscr P_i$-colorable.
For unordered $k$-graphs, however, the situation is subtler. Although a $k$-graph may be colorable by each of several palettes, the vertex orders witnessing these colorings need not coincide. To overcome this difficulty, we use a symmetrization operation on palettes, based on the symmetric groups $\mathbb S_k$, which records how a palette-coloring transforms under a change of the underlying vertex order.

Let $r\ge 2$ be an integer. Let $\mathbb S_r$ be the symmetric group on $[r]$, throughout this section.
Given pairwise distinct integers $x_1,\dots,x_r$,  let $x_1' < \dots < x_r'$ be their increasing rearrangement under the natural order.
We define $\alpha^*[(x_1,\dots,x_r)]$ to be the permutation in $\mathbb S_r$ such that
\[
\alpha^*[(x_1,\dots,x_r) ]= \begin{pmatrix}
x_1' & \dots & x_r' \\
x_1  & \dots & x_r
\end{pmatrix}.
\]
Given a permutation $(x_1,\dots,x_r)$ of $x_1,\dots,x_r$, for each $i\in [r]$, let $(x_1,\dots,x_r)\setminus \{x_i\}$ be the $(r-1)$-tuple obtained by deleting the entry $x_i$ from $(x_1,\dots,x_r)$. 
Given $\tau\in\mathbb S_r$ with
\[
\tau = \begin{pmatrix}
1 & \dots & r \\
\tau(1) & \dots & \tau(r)
\end{pmatrix},
\]
let $\partial_i\tau =\alpha^*[(\tau(1),\dots, \tau(r))\setminus \{\tau(i)\}]\in\mathbb S_{r-1}$.

%Specifically, let $(1,2,\dots,k)\setminus \{j\}$ be the $(k-1)$-tuple obtained by deleting the element $j$ from $(1,2,\dots,k)$.Then $\partial_j\tau$ is the permutation that maps the $(k-1)$-tuple $(1,2,\dots,k)\setminus \{\tau(j)\}$ to the $(k-1)$-tuple $(\tau(1),\tau(2),\dots,\tau(k))\setminus \{\tau(j)\}$.

\begin{definition} [Symmetrization]
Given $k\ge 3$, let $\mathscr P=(\mathcal C,\mathcal T)$ be a $k$-palette. Let $c^\sigma$ be a copy of color $c\in\mathcal C$ for each $\sigma\in \mathbb S_{k-1}$.
Set $\mathcal C^{\mathcal S}:=\{c^\sigma:\ c\in\mathcal C,\ \sigma\in \mathbb S_{k-1}\}$.
For each $\tau\in\mathbb S_k$ and each $k$-tuple $(c_1,\dots,c_k)\in\mathcal T$, define
\[
\tau\cdot(c_1,\dots,c_k)
:=
\bigl(c_{\tau(1)}^{\partial_1\tau},\dots,c_{\tau(k)}^{\partial_k\tau}\bigr),
\]
and set
\[
\mathcal T^{\mathcal S}
:=
\bigcup_{(c_1,\dots,c_k)\in\mathcal T}\ \bigcup_{\tau\in \mathbb S_k}\ \{\tau\cdot(c_1,\dots,c_k)\}.
\]
The \emph{symmetrization} of $\mathscr P$ is the $k$-palette $\mathscr P^{\mathcal S}=(\mathcal C^{\mathcal S},\mathcal T^{\mathcal S})$. 
\end{definition}

Based on the above definition, 
we have the following observation.
\begin{observation}\label{obs:symmetrization-order}
Given $k\ge 3$, let $\mathscr P=(\mathcal C,\mathcal T)$ be a $k$-palette. If a $k$-graph $H$ is $\mathscr P$-colorable, then for every linear order $\prec$ on $V(H)$, the ordered $k$-graph $H^\prec$ is $\mathscr P^{\mathcal S}$-colorable.
\end{observation}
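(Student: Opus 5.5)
The plan is to transport a witnessing coloring from the original order to an arbitrary order $\prec$. Each color is decorated with the permutation that records how $\prec$ rearranges the original order on that $(k-1)$-set. I will then check that, on every edge, this produces exactly the tuple $\tau\cdot(c_1,\dots,c_k)$ for the permutation $\tau$ describing how $\prec$ rearranges the edge.

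\emph{Setup.} Let $\prec_0$ and $\varphi:\binom{V(H)}{k-1}\to\mathcal C$ witness that $H$ is $\mathscr P$-colorable, and fix an arbitrary linear order $\prec$ on $V(H)$. For a $(k-1)$-set $S$, list its elements in $\prec$-order as $s_1\prec\cdots\prec s_{k-1}$. Let $\rho_0(s_j)\in[k-1]$ denote the rank of $s_j$ inside $S$ with respect to $\prec_0$. Define
\[
\sigma_S:=\alpha^*\bigl[(\rho_0(s_1),\dots,\rho_0(s_{k-1}))\bigr]\in\mathbb S_{k-1},
\qquad
\varphi'(S):=\varphi(S)^{\sigma_S}\in\mathcal C^{\mathcal S}.
\]
I claim that $\prec$ together with $\varphi'$ witnesses that $H^\prec$ is $\mathscr P^{\mathcal S}$-colorable.

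\emph{Invariance of $\alpha^*$.} By definition, $\alpha^*[(x_1,\dots,x_r)]$ depends only on the relative order pattern of the distinct integers $x_1,\dots,x_r$. Concretely, it sends the $j$-th smallest entry to the entry in position $j$. Hence applying any order-preserving relabeling to the entries does not change it.

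\emph{Checking an edge.} Take an edge $e\in E(H)$ and write $u_1\prec_0\cdots\prec_0 u_k$ for its vertices, with $c_j:=\varphi(e\setminus\{u_j\})$. Then $(c_1,\dots,c_k)\in\mathcal T$. Write the $\prec$-order of $e$ as $w_1\prec\cdots\prec w_k$, and let $\tau\in\mathbb S_k$ be the unique permutation with $w_i=u_{\tau(i)}$.
\begin{enumerate}
\item The $i$-th entry of the $\prec$-tuple of $e$ is $\varphi'(e\setminus\{w_i\})$. Its base color is $\varphi(e\setminus\{u_{\tau(i)}\})=c_{\tau(i)}$.
\item For $S=e\setminus\{w_i\}$, the $\prec$-listing of $S$ is $(u_{\tau(j)})_{j\neq i}$. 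The $\prec_0$-ranks of these vertices inside $S$ are obtained from the indices $(\tau(j))_{j\ne i}$ by the order-preserving map $[k]\setminus\{\tau(i)\}\to[k-1]$.
\item By the invariance of $\alpha^*$, it follows that $\sigma_S=\alpha^*\bigl[(\tau(1),\dots,\tau(k))\setminus\{\tau(i)\}\bigr]=\partial_i\tau$.
\end{enumerate}
Therefore the $\prec$-tuple of $e$ under $\varphi'$ is $\bigl(c_{\tau(1)}^{\partial_1\tau},\dots,c_{\tau(k)}^{\partial_k\tau}\bigr)=\tau\cdot(c_1,\dots,c_k)$, which lies in $\mathcal T^{\mathcal S}$ by the definition of the symmetrization. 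Since $e$ was arbitrary, this proves the observation.

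\emph{Main obstacle.} The only delicate point is item (ii): matching the bookkeeping of $\partial_i\tau$ with the permutation $\sigma_S$ attached to the $(k-1)$-set $S$. The invariance of $\alpha^*$ under order-preserving relabeling is what resolves this. Everything else in the argument is direct unwinding of the definitions.
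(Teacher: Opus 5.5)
Your proof is correct and follows the same idea as the paper's sketch. For each edge you take the permutation $\tau$ recording how $\prec$ rearranges the witnessing order (the paper's $\tau_e=\alpha^*[(i_1',\dots,i_k')]$) and show that the new $\prec$-tuple is $\tau\cdot(c_1,\dots,c_k)\in\mathcal T^{\mathcal S}$. Your version is more complete than the paper's edge-by-edge description: defining $\varphi'(S)=\varphi(S)^{\sigma_S}$ directly on $(k-1)$-sets and checking $\sigma_{e\setminus\{w_i\}}=\partial_i\tau$ makes explicit that edges sharing a $(k-1)$-set assign it the same decorated color, which the paper leaves implicit.
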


This observation comes from the following fact: Let $\prec_1,\prec_2$ be two different orderings on $V(H)=[N]$. Without loss of generality, we assume that $\prec_1$ is the natural order on $[N]$. Fix any $e=\llbracket i_1,i_2,\dots,i_k\rrbracket\in E(H)$ and suppose $i_1'\prec_2i_2'\prec_2\dots\prec_2i_k'$ is a rearrangement of elements of $e$. Let $\tau_{e}=\alpha^*[(i_1',i_2',\dots,i_k')]$. If $e=i_1\prec_1i_2,\prec_1\dots\prec_1i_k$ is colored by some $(c_1,c_2,\dots,c_k)\in\mathcal{T}$, then $e=i_1'\prec_2i_2',\prec_2\dots\prec_2i_k'$ can be colored by  $\tau_{e}(c_1,c_2,\dots,c_k)\in\mathcal{T}^{\mathcal{S}}$.

Motivated by~\cref{obs:symmetrization-order}, once one palette is chosen as the reference order, the remaining palettes can be incorporated through their symmetrization. 
For this reason, 
the correct object in the multi-palette classification theorem is not simply the ordinary product $\prod_{i=1}^t \mathscr P_i$, but rather the mixed product
\[
\mathscr P_j \times \prod_{i\in[t]\setminus\{j\}} \mathscr P_i^{\mathcal S}.
\]

We now state the corresponding multi-palette classification theorem.

\begin{theorem}\label{thm: classification for multicase}
Given $k\ge 3$ and $t\in \mathbb N$,
let ${\mathscr P}_1, \ldots, {\mathscr P}_t$ and ${\mathscr P}_0$ be $k$-palettes. There exists a $k$-graph
$H$ that is ${\mathscr P}_i$-colorable for every $i \in [t]$ but not ${\mathscr P}_0$-colorable if and only if for
every $j \in [t]$, there is no homomorphism from the palette ${\mathscr P}_j \times \prod_{i \in [t] \setminus \{j\}} {\mathscr P}_i^{\mathcal S}$ to ${\mathscr P}_0$
or to ${\rm rev}({\mathscr P}_0)$.
\end{theorem}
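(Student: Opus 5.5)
The plan is to follow the two-step structure of the proof of \cref{thm: classification for single}: an ordered statement obtained via regularity (as in \cref{lem:single order}), and an unordering step via random linear hypergraphs (as in \cref{lm4.8}). Write $\mathscr Q_j:=\mathscr P_j\times\prod_{i\ne j}\mathscr P_i^{\mathcal S}$.

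\emph{Necessity.} Let $H$ be $\mathscr P_i$-colorable for all $i$ but not $\mathscr P_0$-colorable, and suppose some $\mathscr Q_j$ maps homomorphically to $\mathscr P_0$ or to ${\rm rev}(\mathscr P_0)$. Take the order $\prec$ witnessing $\mathscr P_j$-colorability of $H$. By \cref{obs:symmetrization-order}, for each $i\ne j$ the ordered graph $H^\prec$ is $\mathscr P_i^{\mathcal S}$-colorable. Colorings with respect to a common order combine into a product coloring, so $H^\prec$ is $\mathscr Q_j$-colorable. Composing with the homomorphism then gives a $\mathscr P_0$- or ${\rm rev}(\mathscr P_0)$-coloring of $H$. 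Since ${\rm rev}(\mathscr P_0)$-colorability is equivalent to $\mathscr P_0$-colorability, this is a contradiction.

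\emph{Sufficiency.} By \cref{lem:single order} applied to $\mathscr Q_j$ and $\mathscr P_0$, and to $\mathscr Q_j$ and ${\rm rev}(\mathscr P_0)$, we obtain for each $j$ ordered graphs $H_j^+$ and $H_j^-$. Both are $\mathscr Q_j$-colorable; $H_j^+$ is not $\mathscr P_0$-colorable and $H_j^-$ is not ${\rm rev}(\mathscr P_0)$-colorable (in the given orders). We then run the random construction of \cref{lm4.8}, but now require the random host to be colorable by every $\mathscr P_i$. A single global palette coloring with the natural order would only ensure $\mathscr P_1$-colorability, so the key is to plant copies that remain simultaneously colorable.

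The construction I would try is the following. Take $L$ a large linear $m$-graph on $[N]$, and inside each edge $e$ of $L$ place a gadget $G_e$. Every copy of $H_j^\pm$ must be colorable by all $\mathscr P_i$ \emph{globally}. To arrange this, define $H$ as the union over $i$ of an independent random $\mathscr P_i$-construction: for each $i$ choose a random order $\prec_i$ and a random coloring $f_i$, and let $H$ consist of the $k$-sets that are admissible for all $i$ simultaneously. Then $H$ is $\mathscr P_i$-colorable for each $i$, witnessed by $(\prec_i,f_i)$.

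Now fix a candidate order $\prec$. By a multi-sequence Erd\H{o}s--Szekeres/Ramsey argument, each edge $e_j$ of $L$ contains a large subset $S$ on which all $t+1$ orders $\prec_1,\dots,\prec_t,\prec$ are pairwise monotone (each equal or reversed relative to $\prec_1$). On $S$, taking $j$ to be an index whose order is compared against $\prec$, the restriction of $H$ is exactly a $\mathscr P_j$-coloring combined with relabelled (symmetrized) colorings of the others. With probability at least some fixed $p>0$ it contains an ordered copy of $H_j^+$ or of ${\rm rev}(H_j^-)$, matching whether $\prec$ agrees with or reverses the relevant order; here the symmetrized colors $c^\sigma$ with $\sigma$ the constant order permutation collapse, so $\mathscr Q_j$ suffices. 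By linearity of $L$ these events are independent across edges, and the union bound over $N!$ orders concludes as before.

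The main obstacle is precisely this case analysis. We must guarantee that in each monotone configuration the required copy is produced with probability bounded below. Since all orders are monotone on $S$, every $\prec_i$ is either the natural order or its reverse there. The reversed palettes $\mathscr P_i$ must therefore be realized inside $\mathscr P_i^{\mathcal S}$, using the permutation $\tau=$ reversal, whose $\partial$-permutations are reversals. I expect to have to verify that ${\rm rev}(\mathscr P_i)$ embeds homomorphically into $\mathscr P_i^{\mathcal S}$ via $c\mapsto c^{\rm rev}$, together with the identity embedding $c\mapsto c^{\rm id}$. This lets the planted $\mathscr Q_j$-colored copy be realized by suitable choices of $f_i$ on $\binom{S}{k-1}$, with probability at least $\prod_i|\mathcal C_i|^{-n^{k-1}}$.
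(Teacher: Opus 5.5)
Your necessity argument is correct and is the paper's. The sufficiency argument has a genuine gap at the step ``the symmetrized colors $c^\sigma$ \dots\ collapse, so $\mathscr Q_j$ suffices'', where the implication runs the wrong way.

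On a set $S$ where $\prec_1,\dots,\prec_t,\prec$ are pairwise monotone, each $\prec_i$ is $\prec$ or its reverse. So $H^\prec[S]$ is colored by an \emph{ordinary} product $\prod_i\widehat{\mathscr P}_i$ with $\widehat{\mathscr P}_i\in\{\mathscr P_i,{\rm rev}(\mathscr P_i)\}$. The maps $c\mapsto c^{\rm id}$ and $c\mapsto c^{\rm rev}$ only show that this product maps \emph{into} $\mathscr Q_j$. To find $H_j^+$ inside $H^\prec[S]$, its $\mathscr Q_j$-coloring would have to be realizable with one fixed pattern per factor. That coloring may use different $\tau\in\mathbb S_k$ and superscripts on different edges, so you would need maps $\mathscr P_i^{\mathcal S}\to\widehat{\mathscr P}_i$, which do not exist in general.

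The hypothesis does not exclude a homomorphism $\prod_i\widehat{\mathscr P}_i\to\mathscr P_0$, and this really occurs. For $k\ge4$, take $\mathscr P_0$ and the palettes $\mathscr P_2,\mathscr P_3$ from part~(5) of the proof of \cref{thm:values of k-2-pi} as your $t=2$ palettes; the argument given there shows the hypothesis holds. The special colors sit at positions $\{1,2\}$ and $\{1,3\}$, and at the mirrored positions after reversal. Any such pair of position sets meets in at most one coordinate. Hence in every product $\widehat{\mathscr P}_2\times\widehat{\mathscr P}_3$ each color occurs exactly once, and the product maps to $\mathscr P_0$. So $H^\prec[S]$ is $\mathscr P_0$-colorable for \emph{every} such $S$, and no obstruction can be planted there. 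Making all orders mutually monotone destroys exactly the independence between the orders that the symmetrized factors are meant to record.

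There is also an upstream problem. Your host is $\mathscr P_i$-colorable for every $i$, so it can contain a copy of $H_j^\pm$ only if $\widetilde{H_j^\pm}$ is $\mathscr P_i$-colorable for all $i\ne j$. Applying \cref{lem:single order} to $\mathscr Q_j$ gives only $\mathscr P_i^{\mathcal S}$-colorability, which is strictly weaker. The superscripts make the local orders on $(k-1)$-sets consistent edge by edge, not globally, so the converse of \cref{obs:symmetrization-order} fails.

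This is why the paper proves \cref{lemma: key lemma for classification for multicase} directly. Its regularity host is built from $t$ orders tied to the coordinates of a grid partition $\{U_{\mathbf i}:\mathbf i\in[R]^t\}$. The Ramsey-type result \cref{lm: exists majority function lemma} then extracts a homomorphism from the mixed product. The contrapositive yields $H_j^\pm$ whose unordered versions are genuinely $\mathscr P_i$-colorable.

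In \cref{thm6.dierorder} the random orders are fixed first, so that they agree with the $t$ grid coordinates on many edges of $L$. Only the colors stay random, which is also what your independence claim across edges of $L$ needs. The Fishburn--Graham lemma (\cref{lem:order lemma}) makes any candidate $\prec$ lexicographic in signed permuted coordinates on a subgrid $J_1\times\cdots\times J_t$. One then plants $H_{\sigma(1)}^{\pm}$ via independent order-preserving maps $\iota_s:V(G)\to J_s$, which realizes every witness order $<_s$ simultaneously rather than forcing them to coincide.
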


As an immediate corollary, in fact, we have the following more general result.

\begin{corollary}\label{Cor:classification for multi case}
Given $k\ge 3$ and $s,t\in\mathbb N$, let ${\mathscr P}_1,\dots,{\mathscr P}_s$ and ${\mathscr Q}_1,\dots,{\mathscr Q}_t$ be $k$-palettes. There exists a $k$-graph
$F$ that is ${\mathscr P}_i$-colorable for every $i\in[s]$ but not ${\mathscr Q}_j$-colorable for every $j\in[t]$ if and only if for every $i\in[s]$ and every $j\in[t]$, there is no homomorphism from the palette ${\mathscr P}_{i} \times \prod_{i' \in [s] \setminus \{i\}} {\mathscr P}_{i'}^{\mathcal S}$ to ${\mathscr Q}_j$ or to ${\rm rev}({\mathscr Q}_j)$.
\end{corollary}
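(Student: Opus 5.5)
The plan is to deduce \cref{Cor:classification for multi case} from \cref{thm: classification for multicase}, applied once for each palette $\mathscr Q_j$ in the role of $\mathscr P_0$. One must not merge the $\mathscr Q_j$ into a single ``disjoint union'' palette. Such a palette could color different tight components of $F$ using different $\mathscr Q_j$, so its colorability is not equivalent to ``$\mathscr Q_j$-colorable for some $j$''. Instead I will glue together witnesses obtained separately for each $j$.

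\emph{Necessity.} Suppose $F$ is $\mathscr P_i$-colorable for every $i\in[s]$ and is not $\mathscr Q_j$-colorable for any $j\in[t]$. Fix $j\in[t]$. Then $F$ witnesses the existence statement of \cref{thm: classification for multicase} with palettes $\mathscr P_1,\dots,\mathscr P_s$ and $\mathscr P_0=\mathscr Q_j$. The ``only if'' direction of that theorem says there is no homomorphism from $\mathscr P_i\times\prod_{i'\in[s]\setminus\{i\}}\mathscr P_{i'}^{\mathcal S}$ to $\mathscr Q_j$ or to ${\rm rev}(\mathscr Q_j)$, for every $i\in[s]$. Since $j$ was arbitrary, this gives the stated condition.

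\emph{Sufficiency.} Assume the homomorphism condition holds for all $i\in[s]$ and $j\in[t]$. For each fixed $j$, \cref{thm: classification for multicase} with $\mathscr P_0=\mathscr Q_j$ gives a $k$-graph $F_j$ that is $\mathscr P_i$-colorable for all $i\in[s]$ but not $\mathscr Q_j$-colorable. Let $F$ be the vertex-disjoint union $F_1\sqcup\cdots\sqcup F_t$.

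\begin{itemize}
\item $F$ is $\mathscr P_i$-colorable for each $i$. Take witnessing orders $\prec_j$ and colorings $\varphi_j$ of the $F_j$. Order $V(F)$ by placing all of $V(F_1)$ first, then $V(F_2)$, and so on, keeping the internal order $\prec_j$ inside each block. Color each $(k-1)$-set contained in some $V(F_j)$ by $\varphi_j$, and color the remaining $(k-1)$-sets arbitrarily. Every edge of $F$ lies inside a single block, where both the order and the coloring agree with the witness for $F_j$. Hence every edge receives an admissible tuple of $\mathscr P_i$.
\item $F$ is not $\mathscr Q_j$-colorable for any $j$. If an order $\prec$ and a coloring $\varphi$ witnessed $\mathscr Q_j$-colorability of $F$, then restricting $\prec$ and $\varphi$ to $V(F_j)$ would witness $\mathscr Q_j$-colorability of $F_j$, since the edges of $F_j$ are edges of $F$. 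This is a contradiction.
\end{itemize}

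So $F$ is the desired $k$-graph. The only step needing care is to check that restricting and concatenating colorings is compatible with the ordered-tuple condition in \cref{def:k-palette}. This holds because every edge lies within one block, and the relative order of its vertices is the same in the block order as in the combined order.
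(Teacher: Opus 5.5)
Your proposal is correct and is essentially the paper's proof. Both apply \cref{thm: classification for multicase} with $\mathscr P_0=\mathscr Q_j$ for each $j$ to get necessity, and both take the vertex-disjoint union of the resulting $F_j$ to get sufficiency. Your explicit check is also sound: concatenating block orders and colorings preserves $\mathscr P_i$-colorability, and restricting a witness to $V(F_j)$ preserves $\mathscr Q_j$-colorability. The paper asserts this step without detail.
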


\begin{proof} Suppose first that there exists a $k$-graph $F$ that is $\mathscr P_i$-colorable for every $i\in[s]$ but not $\mathscr Q_j$-colorable for every $j\in[t]$. Fix $i\in[s]$ and $j\in[t]$. Applying \cref{thm: classification for multicase} with ${\mathscr P}_0={\mathscr Q}_j$ yields the statement that there is no homomorphism from the palette ${\mathscr P}_i\times \prod_{i'\in[s]\setminus\{i\}} {\mathscr P}_{i'}^{\mathcal S}$ to ${\mathscr Q}_j$ or to ${\rm rev}({\mathscr Q}_j)$.

For the converse, fix $j\in[t]$. Applying \cref{thm: classification for multicase} with
${\mathscr P}_0={\mathscr Q}_j$,
we obtain a $k$-graph $F_j$ that is ${\mathscr P}_i$-colorable for every $i\in[s]$ but not ${\mathscr Q}_j$-colorable. 
Let $F$ be the disjoint union of $F_1,\dots,F_t$. Then $F$ is ${\mathscr P}_i$-colorable for every $i\in[s]$ but not ${\mathscr Q}_j$-colorable for every $j \in [t]$.    
\end{proof}

Next, we state and prove two lemmas that are useful for the proof of \cref{thm: classification for multicase}. The first is a Ramsey-type majority lemma.

\begin{theorem}\label{lm: exists majority function lemma}
Given integers $k\ge 3$ and $n,t,\ell \in \mathbb{N}$, there exists $N \in \mathbb{N}$ such that the following holds. For every function $F : ({[N]}^{t})^{k-1} \to [\ell]$, there exist pairwise disjoint $J_1, J_2,\dots, J_t \subseteq [N]$ of each size $n$ and a function $\Gamma:(\mathbb S_{k-1})^t\to[\ell]$ such that
\[
F(\mathbf{x}_1, \mathbf{x}_2,\dots, \mathbf{x}_{k-1})= \Gamma \bigl(\tau_1,\tau_2,\dots,\tau_{t}\bigr),
\]
whenever the following conditions hold:
\stepcounter{propcounter}
\begin{enumerate}[label = {{\rm (\Alph{propcounter}\arabic{enumi})}}]
\item \label{J1} each $\mathbf x_r=(x_r^1,\dots,x_r^t)\in J_1\times\cdots\times J_t$ for every $r\in[k-1]$;
\item \label{J2} for every $s\in[t]$, the numbers $x_1^s,\dots,x_{k-1}^s$
are pairwise distinct;
\item \label{J3} $\tau_s=\alpha^*[(x_1^s,\dots,x_{k-1}^s)]$ for every $s\in[t]$.
\end{enumerate}
\end{theorem}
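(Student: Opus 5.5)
The plan is to reduce the statement to a single application of the hypergraph Ramsey theorem for $t(k-1)$-uniform hypergraphs, using a product structure that fixes all relative orders across the $t$ coordinates. Set $q := t(k-1)$ and choose $N := R_{q}(M,\ell^{c})$ for a suitable $M$ (say $M = tn\cdot q$) and an appropriate number $c$ of ``order types''. The idea is to encode a tuple $(\mathbf x_1,\dots,\mathbf x_{k-1})$ with $\mathbf x_r\in J_1\times\cdots\times J_t$ by the set $A$ of its $q$ entries, once we arrange that the sets $J_1,\dots,J_t$ occupy disjoint intervals $J_1<J_2<\cdots<J_t$. Then the entries from $J_s$ are exactly the $(k-1)$ elements of $A$ lying in the $s$-th block, and by \ref{J2} they are distinct.

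Concretely, color each $q$-subset $A=\{a_1<\cdots<a_q\}$ of $[N]$ by the vector
\[
\bigl(F(\mathbf x_1,\dots,\mathbf x_{k-1})\bigr)_{(\tau_1,\dots,\tau_t)\in(\mathbb S_{k-1})^t}\in[\ell]^{((k-1)!)^t}.
\]
Here, for each $(\tau_1,\dots,\tau_t)$, the tuple is formed by splitting $A$ into $t$ consecutive groups $A^1<\cdots<A^t$ of size $k-1$. We then set $x_r^s$ to be the element of $A^s$ of rank $\tau_s^{-1}$-prescribed, so that $\alpha^*[(x_1^s,\dots,x_{k-1}^s)]=\tau_s$. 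This is a coloring with $c=\ell^{((k-1)!)^t}$ colors. Ramsey's theorem gives a set $Z\subseteq[N]$ of size $M=tn$ all of whose $q$-subsets get the same color vector; call its $(\tau_1,\dots,\tau_t)$-entry $\Gamma(\tau_1,\dots,\tau_t)$.

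Next, split $Z$ into $t$ consecutive intervals $J_1<\cdots<J_t$ of size $n$. Given $\mathbf x_1,\dots,\mathbf x_{k-1}$ satisfying \ref{J1}--\ref{J3}, let $A$ be the set of all entries. Because the $J_s$ are consecutive intervals, the $s$-th group $A^s$ of $A$ is exactly $\{x_1^s,\dots,x_{k-1}^s\}$. Moreover, the positions within $A^s$ are determined precisely by $\tau_s$. Hence the recovered tuple for index $(\tau_1,\dots,\tau_t)$ is exactly $(\mathbf x_1,\dots,\mathbf x_{k-1})$, and $F$ of it equals $\Gamma(\tau_1,\dots,\tau_t)$.

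The only real care point is the bookkeeping that the decoding map (from an ordered $q$-set and a permutation vector to a tuple) is well defined and inverts the encoding. This requires consistently using the convention for $\alpha^*$, namely that $\tau_s(i)$ gives the rank of $x_i^s$; under that convention, $x_i^s$ is the $\tau_s(i)$-th smallest element of $A^s$. No regularity or probabilistic input is needed.
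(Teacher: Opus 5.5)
Your proposal is correct and is essentially the paper's proof. The paper likewise takes $N=R_{(k-1)t}\bigl(tn,\ell^{((k-1)!)^t}\bigr)$ and colors each $(k-1)t$-set by its vector of $F$-values indexed by $(\mathbb S_{k-1})^t$, with $x_r^s$ placed as the $\tau_s(r)$-th smallest element of the $s$-th consecutive group of size $k-1$; it then takes $J_1,\dots,J_t$ to be consecutive blocks of size $n$ of the monochromatic set. The only blemish is the stray suggestion $M=tn\cdot q$, which is unnecessary: $M=tn$, the value you actually use, suffices.
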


\begin{proof}
Let $N:=R_{(k-1)t}\bigl(tn,\ell^{((k-1)!)^t}\bigr)$.
Fix a function
$F:([N]^t)^{k-1}\to[\ell]$.
For each
\[
Y=\llbracket z_1,\dots,z_{(k-1)t}\rrbracket\in \binom{[N]}{(k-1)t},
\]
write $y_r^s:=z_{(s-1)(k-1)+r}$ for each $s\in[t]$ and $r\in[k-1]$. Thus, for each fixed $s$,
\[
y_1^s<\cdots<y_{k-1}^s.
\]
We color $Y$ by the function
\[
\Gamma_Y:(\mathbb S_{k-1})^t\to[\ell]
\]
defined as follows.

Given $(\tau_1,\ldots,\tau_t)\in (S_{k-1})^t$, with
\[
  \tau_s=
  \begin{pmatrix}
  1 & \cdots & k-1\\
  \tau_s(1) & \cdots & \tau_s(k-1)
  \end{pmatrix}
\]
for each $s\in[t]$, let
\[
  \mathbf x_r=\bigl(y^1_{\tau_1(r)},\ldots,y^t_{\tau_t(r)}\bigr)
  \in [N]^t
\]
for each $r\in[k-1]$.

Set
\[
\Gamma_Y(\tau_1,\dots,\tau_t):=F(\mathbf x_1,\dots,\mathbf x_{k-1}).
\]
Thus each $(k-1)t$-subset of $[N]$ receives one of
$\ell^{((k-1)!)^t}$ possible colors.

By the choice of $N$, there exists a subset
$S=\llbracket x_1,\dots, x_{tn}\rrbracket\subseteq[N]$
of size $tn$ such that every $(k-1)t$-subset of $S$ receives the same function, say $\Gamma$. For each $s\in[t]$, define
\[
J_s:=\{x_{(s-1)n+1},\dots,x_{sn}\}.
\]
Then each $J_s$ has size $n$.

Now for each $r\in[k-1]$,
let
$\mathbf x_r=(x_r^1,\dots,x_r^t)\in J_1\times\cdots\times J_t$
be as in the statement. For each $s\in[t]$, the numbers
$x_1^s,\dots,x_{k-1}^s$
are pairwise distinct and belong to $J_s$, so their union over all $s$ is a $(k-1)t$-subset
$Y\subseteq S$.  
Since every $(k-1)t$-subset of $S$ receives the same function $\Gamma$, we have
$\Gamma_Y=\Gamma$.
By construction of $\Gamma_Y$, if
\[
\tau_s=\alpha^*[(x_1^s,\dots,x_{k-1}^s)]
 \text{ for every }s\in[t],
\]
then
\[
F(\mathbf x_1,\dots,\mathbf x_{k-1})=\Gamma_Y(\tau_1,\dots,\tau_t)=\Gamma(\tau_1,\dots,\tau_t).
\]
This proves the lemma.
\end{proof}

The next lemma is the multi-palette analog of \cref{lem:single order}. It is the key step in the proof of \cref{thm: classification for multicase}.

\begin{lemma}\label{lemma: key lemma for classification for multicase}
Let ${\mathscr P}_1,\dots,{\mathscr P}_t$ and ${\mathscr P}_0$ be $k$-palettes. Suppose that every ordered $k$-graph $H^{\prec}$ such that
\begin{itemize}
\item $H^{\prec}$ is ${\mathscr P}_1$-colorable, and
\item $H$ is ${\mathscr P}_j$-colorable for every $2\le j\le t$,
\end{itemize}
is also ${\mathscr P}_0$-colorable.
Then there exists a homomorphism
from the palette ${\mathscr P}_1\times\prod_{j=2}^t{\mathscr P}_j^{\mathcal S}$ to ${\mathscr P}_0$.
\end{lemma}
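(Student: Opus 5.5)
We follow the template of the proof of \cref{lem:single order}, replacing the single random coloring by a product construction on a $t$-dimensional grid. Write $\mathscr P_j=(\mathcal C_j,\mathcal T_j)$ and $\mathscr Q:=\mathscr P_1\times\prod_{j=2}^t\mathscr P_j^{\mathcal S}$. Take $N$ large and vertex set $[N]^t$, ordered lexicographically by the first coordinate and then arbitrarily; the order $\prec$ we use is the natural order of the first coordinate. For each $j\in[t]$, independently color the $(k-1)$-subsets of $[N]$ (projected in coordinate $j$) uniformly at random by $f_j:\binom{[N]}{k-1}\to\mathcal C_j$, with the same quasirandomness property \eqref{eq:random-color-property}. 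Define $H$ on vertex set $[N]^t$: a $k$-set $\{\mathbf v_1,\dots,\mathbf v_k\}$ whose coordinates are pairwise distinct in each of the $t$ coordinates is an edge if, for every $j$, ordering the vertices by their $j$-th coordinate and reading off the $f_j$-colors of the $(k-1)$-subsets of $j$-th coordinates gives a tuple in $\mathcal T_j$.

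Then $H$ is $\mathscr P_j$-colorable for each $j$, with order given by the $j$-th coordinate; break ties arbitrarily, since edges never have ties. Moreover, $H^\prec$ (ordered by the first coordinate) is $\mathscr P_1$-colorable. By hypothesis, $H^\prec$ is therefore $\mathscr P_0$-colorable via some $f_0$ on $(k-1)$-subsets of $[N]^t$.

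The key point is that, relative to the order $\prec$, the color that $\mathscr P_j$ assigns to a $(k-1)$-set can be recorded by the pair $(f_j(\text{projection}),\sigma_j)$. Here $\sigma_j=\alpha^*[\cdot]\in\mathbb S_{k-1}$ is the permutation comparing the order of the $j$-th coordinates with the order of the first coordinates. This is exactly the color $c^{\sigma}\in\mathcal C_j^{\mathcal S}$, and an edge's full tuple is then $\tau\cdot(c_1,\dots,c_k)\in\mathcal T_j^{\mathcal S}$ (cf.\ \cref{obs:symmetrization-order}).

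To find the homomorphism, we first apply \cref{lm: exists majority function lemma}. Its input function sends an ordered $(k-1)$-tuple of grid points to the map (color profile $\mapsto$ majority $f_0$-color), where the majority is computed after regularization, in analogy with $h_I$ in the proof of \cref{lem:single order}. The lemma yields disjoint coordinate blocks $J_1,\dots,J_t$ on which the majority function depends only on the permutation pattern $(\tau_1,\dots,\tau_t)$, i.e.\ only on the symmetrization labels. This defines $h:\mathcal C_{\mathscr Q}\to\mathcal C_0$ by sending $(x_1,x_2^{\sigma_2},\dots,x_t^{\sigma_t})$ to the majority $f_0$-color among $(k-1)$-sets with $f_j$-colors $x_j$ and relative pattern $\sigma_j$.

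We then verify that $h$ is a homomorphism. Given a tuple in $\mathcal T_{\mathscr Q}$, which consists of a $\mathcal T_1$ tuple and tuples $\tau_j\cdot(c)$, we realize the prescribed relative orderings by placing $k$ clusters appropriately in the blocks $J_j$. We then apply the regularity lemma (\cref{multicolor hypergraph regularity lemma}) to the $(k-1)$-graph colored by all $f_j$ together with $f_0$ (possibly on product clusters), select consistent good polyads as in \cref{claim-single,claim:choice of polyads}, and use \cref{E-lemma} to embed $K_k^{(k-1)}$ with each face in the majority class. The resulting $k$ vertices form an edge of $H$ ordered by $\prec$, and hence $f_0$ gives a tuple in $\mathcal T_0$, which is exactly the $h$-image of the tuple.

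We expect the main obstacle to be coordinating regularity with the grid structure. Each $f_j$ is a coloring of projections, so the $(k-1)$-graph on $[N]^t$ inherits its color classes from $t$ independent colorings of $[N]$. We must ensure that the quasirandom density bounds \eqref{eq:random-color-property} survive intersection across coordinates, since independence gives a product density $\prod_j|\mathcal C_j|^{-1}$. We also need the initial equipartition to refine both the blocks $J_s$ and the order patterns, so that \cref{lm: exists majority function lemma} and the regularity lemma are applied compatibly. We would first apply the Ramsey step to the cluster indices, after regularizing with an initial partition of each coordinate into ordered intervals, exactly mirroring how $R_{k-1}(k,\ell)$ was used in \cref{lem:single order}.
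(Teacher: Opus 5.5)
Your skeleton matches the paper's proof. You build a random host read off along $t$ orders and apply the hypothesis to $H^{\prec_1}$ to get $f_0$. You regularize with an ordered initial equipartition indexed by $[R]^t$, fix one cluster per part and a consistent polyad complex, and define majority maps $\phi_I:\mathcal C^{\times}\to\mathcal C_0$. Then \cref{lm: exists majority function lemma} gives $\Gamma$, you set $h(x_1,\dots,x_t)=\Gamma(\mathrm{id},\operatorname{pat}_2(x_2),\dots,\operatorname{pat}_t(x_t))(x_1,\operatorname{org}_2(x_2),\dots,\operatorname{org}_t(x_t))$, and you embed on clusters $V_{\mathbf i_q}$ with $i^s_q=j^s_{\tau_s(q)}$.

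There is a genuine gap, and it sits exactly where you deviate: the grid host $[N]^t$ whose faces inherit colors from projections $\pi_j$. The embedding step needs $d(G_{\mathbf c_q,h(\mathbf y_q)}\mid \hat Q^{(k-2)}_{I_q})\ge d_{k-1}$. For that, every class $\{X:(f_j(\pi_jX))_j=\mathbf c\}$ must take about a $1/m_0$ fraction of $\mathcal K_{k-1}(G^{(k-2)})$ for \emph{every} $(k-2)$-graph $G^{(k-2)}$ on $[N]^t$ with many $(k-1)$-cliques. The polyads live on $[N]^t$ and depend on $f_0$, so they cannot be fixed in advance. This is not the property \eqref{eq:random-color-property} for each $f_j$ on $[N]$, and its Chernoff-plus-union-bound proof does not transfer. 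Faces of $[N]^t$ no longer get independent colors: each value $f_j(A)$ is shared by $N^{(t-1)(k-1)}$ faces, so concentration only yields tails $e^{-\Theta(\varepsilon^2N^{k-1})}$. The union bound, however, would run over $2^{\binom{N^t}{k-2}}$ graphs, and $t(k-2)\ge k-1$ once $t\ge2$, so the argument of \cref{lem:single order} cannot close. You name this as the main obstacle but leave it open, so as written the density hypothesis of \cref{E-lemma} is unsupported.

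The paper avoids the issue by not using a grid at all:
\begin{itemize}
\item the vertex set is $[N]$, equipartitioned into parts $U_{\mathbf i}$ with $\mathbf i\in[R]^t$;
\item $\prec_s$ is any linear order that puts $U_{\mathbf i}$ before $U_{\mathbf i'}$ whenever the $s$-th coordinate of $\mathbf i$ is smaller than that of $\mathbf i'$;
\item each $(k-1)$-subset of $[N]$ independently receives a uniform vector in $\mathcal C^{\times}$.
\end{itemize}
Only parts whose indices differ in every coordinate are ever used, so the arbitrary ordering inside parts is harmless. Then \eqref{eq:random-color-property-multicase} follows verbatim from the single-palette argument with $m_0=|\mathcal C^{\times}|$ colors.

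If you want to keep the grid, you need an extra slicing argument:
\begin{itemize}
\item Require the concentration for each $f_s$ over all $(k-1)$-partite $(k-2)$-graphs on $[N]$; this is a union bound over $2^{O(N^{k-2})}$ events, which is fine.
\item Observe that once all coordinates other than $s$ are fixed, the clique indicator of a $(k-2)$-graph on $[N]^t$ becomes a partite clique indicator in coordinate $s$.
\item Peel off the factors $1/|\mathcal C_s|$ one coordinate at a time, with total additive error $O(t\varepsilon N^{t(k-1)})$.
\item Give faces with non-injective projections a dummy color.
\end{itemize}

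Finally, fix the order of steps. Your verification paragraph reads as if the regularity lemma is applied after a tuple of $\mathcal T_{\mathscr Q}$ is given. The partition, the clusters and the consistent complex must be fixed once, before the maps $\phi_I$ and the Ramsey step. Otherwise $h$ would depend on the tuple; each verification should only apply \cref{E-lemma} inside that fixed complex.
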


\begin{proof}
Let $\mathscr P_s=(\mathcal C_s,\mathcal T_s)$ be a $k$-palette for every $s\in[t]\cup\{0\}$. Set
\[
\mathcal C^{\times}:=\prod_{s=1}^t \mathcal C_s,
\qquad
m_0:=|\mathcal C^{\times}|=\prod_{s=1}^t |\mathcal C_s|,
\qquad
m:=m_0\cdot |\mathcal C_0|,
\qquad
\ell:=|\mathcal C_0|^{m_0},
\qquad
d_{k-1}:=\frac{1}{2m}.
\]
Let $R$ be the integer obtained from \cref{lm: exists majority function lemma}
with parameters $k, n=k$, $t$, and $\ell$.
Choose auxiliary constants satisfying
\[
{1}/{n_1}\ll {1}/{r}, \delta \ll d_2,\dots,d_{k-2}\ll \delta_{k-1} \ll d_{k-1}, {1}/{k},
\]
where $1/d_i\in \mathbb N$ for all $i=2,\dots,k-2$, such that the assumptions of~\cref{E-lemma} are satisfied with $k$ replaced by $k-1$ and $F=K_k^{(k-1)}$. Set $\eta=R^{-t}$ and choose $\delta_{k-1}>0$ sufficiently small such that
\begin{equation}\label{eq:delta-multicase}
2\sqrt{\delta_{k-1}}\,m\,R^{t(k-1)}<1.
\end{equation}
Apply \cref{multicolor hypergraph regularity lemma} with parameters
$R^t, k-1, m, \eta,  \delta_{k-1}, r, \delta$
to obtain integers $t_0$ and $n_0$. Next choose $\eps>0$ sufficiently small such that 
\begin{equation}\label{eq:eps-multicase}
\eps<\min\left\{\frac12,\ (1-\eta)t_0^{-(k-1)}\prod_{j=2}^{k-2}d_j^{\binom{k-1}{j}}\right\},
\end{equation}
and let $N\ge \max\{n_0,n_1\}$ be  large enough such that  $R^t\cdot t_0!\mid N$ and
\begin{equation}\label{eq:positive probability-order}
1-2^{\binom{N}{k-2}}\cdot m_0\cdot  e^{-\frac{\varepsilon^3 N^{k-1}}{2m_0}}>0.
\end{equation}

Let $G$ be the complete $(k-1)$-graph on vertex set $[N]$. Partition $[N]$ into equal parts
\[
[N]=\bigcup_{\mathbf i\in[R]^t}U_{\mathbf i},
\qquad
\mathbf i=(i^1,\dots,i^t).
\]
For each $s\in[t]$, choose a linear order $\prec_s$ on $[N]$ such that
\[
u\prec_s u'
\quad\text{whenever}\quad
u\in U_{(i_1,\dots,i_t)},\ u'\in U_{(i_1',\dots,i_t')},\ \text{and } i_s<i'_s.
\]
In particular, we can take $\prec_1$ to be the natural order on $[N]$.

Now color the edges of $G$ independently and uniformly at random with colors from $\mathcal C^{\times}$. For every $\mathbf c\in\mathcal C^{\times}$, let $G_{\mathbf c}$ denote the spanning subgraph of $G$ consisting of all edges colored $\mathbf c$.
By the same Chernoff-bound argument as in the proof of \cref{lem:single order}, there exists a coloring
\[
\mathbf f:E(G)\to \mathcal C^{\times}
\]
such that for every $(k-2)$-graph $G^{(k-2)}$ on $[N]$ with
$|\mathcal K_{k-1}(G^{(k-2)})|\ge \eps N^{k-1}$
and every $\mathbf c\in\mathcal C^{\times}$, we have
\begin{equation}\label{eq:random-color-property-multicase}
|\mathcal K_{k-1}(G^{(k-2)})\cap E(G_{\mathbf c})|
\ge
(1-\eps)\frac{|\mathcal K_{k-1}(G^{(k-2)})|}{m_0}.
\end{equation}
Write
\[
\mathbf f=(f_1,\dots,f_t),
\]
where $f_s:E(G)\to \mathcal C_s$ is the $s$-th coordinate coloring.

We now define a $k$-graph $H$ on $[N]$.
A $k$-set $e\in\binom{[N]}{k}$ forms an edge of $H$ if and only if for every $s\in[t]$, when the vertices of $e$ are listed in increasing order with respect to $\prec_s$ as
$w_1^s\prec_s \cdots \prec_s w_k^s$,
we have
\[
\bigl(
f_s(e\setminus\{w_1^s\}),
f_s(e\setminus\{w_2^s\}),
\dots,
f_s(e\setminus\{w_k^s\})
\bigr)\in\mathcal T_s.
\] 
By construction, $H^{\prec_1}$ is $\mathscr P_1$-colorable, and $H$ is $\mathscr P_s$-colorable for every $2\le s\le t$.
By the assumption of the lemma, $H^{\prec_1}$ is also $\mathscr P_0$-colorable, and let
\[
f_0:E(G)\to\mathcal C_0
\]
be a coloring witnessing this.

For each
$(\mathbf c,c_0)\in\mathcal C^{\times}\times\mathcal C_0$,
let $G_{\mathbf c,c_0}$ be the spanning subgraph of $G$ with edge set \[
E(G_{\mathbf c,c_0})
=
\{e\in E(G):\mathbf f(e)=\mathbf c,\ f_0(e)=c_0\}.
\]

Applying \cref{multicolor hypergraph regularity lemma} to this $(\mathcal C^{\times}\times\mathcal C_0)$-colored $(k-1)$-graph $G$ with the initial equipartition $\{U_{\mathbf i}: \mathbf i\in [R]^{t}\}$, we obtain a family of partitions
$\bm{\mathcal P}=\bm{\mathcal P}(k-2,\mathbf a)$
where $a_i=1/d_i$ for $i=2,\dots,k-2$ and $1/\eta<a_1<t_0$, such that
\begin{enumerate}
\item $\bm{\mathcal P}$ is $(\eta,\delta(\mathbf a),t_0)$-equitable and $\mathcal P^{(1)}$ refines the partition $\{U_{\mathbf i}:\mathbf i\in [R]^{t}\}$;
\item for each
$(\mathbf c,c_0)\in\mathcal C^{\times}\times\mathcal C_0$, the $(k-1)$-graph $G_{\mathbf c,c_0}$ is $(\delta_{k-1},r(\mathbf a))$-regular with respect to $\bm{\mathcal P}$.
\end{enumerate}

We say that a $(k-1)$-tuple of clusters is \emph{good} for
$(\mathbf c,c_0)\in\mathcal C^{\times}\times\mathcal C_0$
if $G_{\mathbf c,c_0}$ is $(\delta_{k-1},r(\mathbf a))$-regular with respect to all but at most a $\sqrt{\delta_{k-1}}$-fraction of the polyads induced on this $(k-1)$-tuple of clusters. A $(k-1)$-tuple of clusters is called \emph{all-good} if it is good for every $(\mathbf c,c_0)\in\mathcal C^{\times}\times\mathcal C_0$. Similarly as \cref{claim-single} and \cref{claim:choice of polyads}, we have the following statement.

\begin{claim}\label{claim:number of all good tuple}
There exist clusters $V_{\mathbf i}\in \mathcal P^{(1)}$ with $V_{\mathbf i}\subseteq U_{\mathbf i}$ for every $\mathbf i\in[R]^t$ such that every $(k-1)$-tuple among the clusters $\{V_{\mathbf i}:\mathbf i\in[R]^t\}$ is all-good.
Moreover, one can choose a $(|R|^t, k-2)$-complex $\{Q^{(i)}\}_{i=1}^{k-2}$ with clusters $\{V_{\mathbf i}:\mathbf i\in[R]^t\}$ such that the following hold.
\stepcounter{propcounter}
\begin{enumerate}[label = {{\rm (\Alph{propcounter}\arabic{enumi})}}]
\item \label{polyad-11} $Q^{(1)}[V_{\mathbf i}]=V_{\mathbf i}$ for each $\mathbf i \in [R]^t$.

\item \label{polyad-22} For every $2\le i\le k-2$ and every $\Lambda_i\in \binom{[R]^t}{i}$, the restriction $Q^{(i)}_{\Lambda_i}:=Q^{(i)}[\cup_{\mathbf j\in \Lambda_i }V_{\mathbf j}]$ is a $i$-cell.
Moreover, these cells are chosen consistently: for every
$2\le i\le k-3$ and every $\Lambda_{i+1}\in\binom{[R]^t}{i+1}$, $Q_{\Lambda_{i+1}}^{(i+1)}
\subseteq
\mathcal K_{i+1}\bigl(\hat Q_{\Lambda_{i+1}}^{(i)}\bigr)$.

\item \label{polyad-33} For every $I\in \binom{[R]^t}{k-1}$, the union 
\[\hat Q_{I}^{(k-2)}
:=
\bigcup\Bigl\{Q_{\Lambda_{k-2}}^{(k-2)}: \Lambda_{k-2}\in \binom{I}{k-2}\Bigr\}
\]
is a polyad on the $(k-1)$-tuple of clusters indexed by $I$.

\item \label{polyad-44} For every $I\in \binom{[R]^t}{k-1}$
and each pair $(\mathbf c,c_0)\in\mathcal C^{\times}\times\mathcal C_0$,
the $(k-1)$-graph $G_{\mathbf c,c_0}$ is $(\delta_{k-1},r(\mathbf a))$-regular with respect to $\hat Q_I^{(k-2)}$.
\end{enumerate}
\end{claim}

Since the proof is identical to that of~\cref{claim-single} and~\cref{claim:choice of polyads}, we omit the proof of~\cref{claim:number of all good tuple}.
Next, fix an $(R^t,k-2)$-complex
$\{Q^{(i)}\}_{i=1}^{k-2}$
with clusters $\{V_{\mathbf i}:\mathbf i\in[R]^t\}$ satisfying~\cref{claim:number of all good tuple}. For each $I\in \binom{[R]^t}{k-1}$, define a function
\[
\phi_I:\mathcal C^{\times}\to \mathcal C_0
\]
as follows: for each $\mathbf c\in \mathcal C^{\times}$, let $\phi_I(\mathbf c)$ be a color in $\mathcal C_0$ maximizing
\[
\bigl|\mathcal K_{k-1}(\hat Q_I^{(k-2)})\cap E(G_{\mathbf c,c_0})\bigr|
\]
over all $c_0\in\mathcal C_0$.

Let $\Phi:=\{\phi:\mathcal C^{\times}\to\mathcal C_0\}$. Then we have $|\Phi|=\ell$.
Define a function
\[
F_{\mathrm{maj}}:([R]^t)^{k-1}\to\Phi
\]
as follows: 
if $\mathbf z_1,\dots,\mathbf z_{k-1}\in[R]^t$  pairwise have no common coordinate, then set
\[
F_{\mathrm{maj}}(\mathbf z_1,\dots,\mathbf z_{k-1})
:=
\phi_{\{\mathbf z_1,\dots,\mathbf z_{k-1}\}}.
\]
Otherwise, set $F_{\mathrm{maj}}$ to be an arbitrary function in $\Phi$.

We identify $\Phi$ with $[\ell]$ and apply \cref{lm: exists majority function lemma} to the function $F_{\mathrm{maj}}$. Then we obtain 
pairwise disjoint $k$-element sets $J_1,\dots,J_t\subseteq [R]$
and a function
\[
\Gamma:(\mathbb S_{k-1})^t\to \Phi
\]
such that
\begin{equation}\label{eq:gamma-multicase}
F_{\mathrm{maj}}(\mathbf z_1,\dots,\mathbf z_{k-1})
=
\Gamma(\sigma_1,\dots,\sigma_t)
\end{equation}
whenever each
$\mathbf z_r=(z_r^1,\dots,z_r^t)\in J_1\times\cdots\times J_t$ for every $r\in[k-1]$,
any two distinct tuples $\mathbf z_r$ and $\mathbf z_{r'}$ have no common coordinate in any position, and
$\sigma_s=\alpha^*[(z_1^s,\dots,z_{k-1}^s)]$ for every $s\in[t]$.

For each $\mathscr P_s^{\mathcal S}=(\mathcal C_s^{\mathcal S},\mathcal T_s^{\mathcal S})$ and $x=c^\sigma\in\mathcal C_s^{\mathcal S}$, we write
\[
\operatorname{org}_s(x):=c
\qquad\text{and}\qquad
\operatorname{pat}_s(x):=\sigma.
\]
Now define
\[
h:\mathcal C_1\times\prod_{s=2}^t \mathcal C_s^{\mathcal S}\to \mathcal C_0
\]
by
\[
h(x_1,x_2,\dots,x_t)
:=
\Gamma\bigl(\mathrm{id},\operatorname{pat}_2(x_2),\dots,\operatorname{pat}_t(x_t)\bigr)
\bigl(x_1,\operatorname{org}_2(x_2),\dots,\operatorname{org}_t(x_t)\bigr).
\]
Note that $\Gamma\bigl(\mathrm{id},\operatorname{pat}_2(x_2),\dots,\operatorname{pat}_t(x_t)\bigr)\in \Phi$, i.e., $\Gamma\bigl(\mathrm{id},\operatorname{pat}_2(x_2),\dots,\operatorname{pat}_t(x_t)\bigr)$ is a function from $\mathcal C^{\times}$ to $\mathcal C_0$
We claim that $h$ is a homomorphism from
\[
\mathscr P_1\times\prod_{s=2}^t \mathscr P_s^{\mathcal S}
\]
to $\mathscr P_0$.

Let
\[
(\mathbf y_1,\dots,\mathbf y_k)\in \mathcal  T\!\left({\mathscr P}_1\times\prod_{j=2}^t{\mathscr P}_j^{\mathcal S}\right)
\]
and write $\mathbf y_q=(x_{1,q},x_{2,q},\dots,x_{t,q})$ for each $q\in[k]$.
Thus, $(x_{1,1},\dots,x_{1,k})\in \mathcal T_1$
and for each $s\in\{2,\dots,t\}$,
$(x_{s,1},\dots,x_{s,k})\in \mathcal T_s^{\mathcal S}$.

For $s=1$, set $\tau_1:=\mathrm{id}$ and $c_{1,q}:=x_{1,q}$ for each $q\in[k]$.
For each $s\in\{2,\dots,t\}$, since
$(x_{s,1},\dots,x_{s,k})\in \mathcal T_s^{\mathcal S}$,
there exist a permutation $\tau_s\in \mathbb S_k$ and a tuple
$(c_{s,1},\dots,c_{s,k})\in \mathcal T_s$
such that
\[
(x_{s,1},\dots,x_{s,k})
=
\tau_s\cdot (c_{s,1},\dots,c_{s,k}).
\]
Equivalently,
\[
x_{s,q}=c_{s,\tau_s(q)}^{\partial_q\tau_s}
\qquad\text{for every } q\in[k].
\]
Hence
\begin{equation}\label{eq:org-pat-multicase}
\operatorname{org}_s(x_{s,q})=c_{s,\tau_s(q)}
\qquad\text{and}\qquad
\operatorname{pat}_s(x_{s,q})=\partial_q\tau_s
\qquad\text{for every } q\in[k].
\end{equation}
For each $s\in[t]$, write
\[
J_s=\llbracket j_1^s, j_2^s, \cdots, j_k^s\rrbracket \subseteq [R],
\]
and define
\[
i_q^s:=j_{\tau_s(q)}^s
\qquad\text{for } q\in[k],\ s\in[t].
\]
For each $q\in[k]$, let
\[
\mathbf i_q:=(i_q^1,\dots,i_q^t)\in J_1\times\cdots\times J_t.
\]
Then for every $s\in[t]$,
\[
\alpha^*[(i_1^s,\dots,i_k^s)]=\tau_s.
\]
In particular, since $\tau_1=\mathrm{id}$, we have
$i_1^1<i_2^1<\cdots<i_k^1$.

For each $q\in[k]$, let
\[
\Lambda_q:=[k]\setminus\{q\}
\qquad\text{and}\qquad
I_q:=\{\mathbf i_r:r\in\Lambda_q\}.
\]
Note that $\partial_q\tau_s=\alpha^*[((i_1^s,\dots,i_k^s)\setminus\{i_q^s\})]$

Recalling the definition of $\partial_q\tau_s$, deleting the $q$-th entry from the sequence
\[
i_1^s,\dots,i_k^s
\]
leaves a $(k-1)$-tuple with pattern $\partial_q\tau_s$.
Therefore, applying \eqref{eq:gamma-multicase} to the ordered tuple $(\mathbf i_r)_{r\in\Lambda_q}$, taken in the natural order of the indices $r$, gives
\[
F_{\mathrm{maj}}\bigl((\mathbf i_r)_{r\in\Lambda_q}\bigr) 
=
\Gamma\bigl(\partial_q\tau_1,\partial_q\tau_2,\dots,\partial_q\tau_t\bigr)
=
\Gamma\bigl(\mathrm{id},\partial_q\tau_2,\dots,\partial_q\tau_t\bigr).
\]
Since the tuples $\mathbf i_r$ with $r\in\Lambda_q$ are pairwise distinct, we also have
\[
F_{\mathrm{maj}}\bigl((\mathbf i_r)_{r\in\Lambda_q}\bigr)=\phi_{I_q}.
\]
Thus
\begin{equation}\label{eq:phiIq}
\phi_{I_q}
=
\Gamma\bigl(\mathrm{id},\partial_q\tau_2,\dots,\partial_q\tau_t\bigr).
\end{equation}

Now for each $q\in[k]$, define
\[
\mathbf c_q:=
\bigl(x_{1,q},\,\operatorname{org}_2(x_{2,q}),\,\dots,\,\operatorname{org}_t(x_{t,q})\bigr)
\in \mathcal C^{\times}.
\]
By \eqref{eq:org-pat-multicase} and \eqref{eq:phiIq}, we have
\[
\begin{aligned}
h(\mathbf y_q)
&=
\Gamma\bigl(\mathrm{id},\operatorname{pat}_2(x_{2,q}),\dots,\operatorname{pat}_t(x_{t,q})\bigr)
\bigl(x_{1,q},\operatorname{org}_2(x_{2,q}),\dots,\operatorname{org}_t(x_{t,q})\bigr) \\
&=
\Gamma\bigl(\mathrm{id},\partial_q\tau_2,\dots,\partial_q\tau_t\bigr)(\mathbf c_q) \\
&=
\phi_{I_q}(\mathbf c_q).
\end{aligned}
\]
By condition~\ref{p4} of \cref{def-eq-partition}, we have
\[
|\mathcal K_{k-1}(\hat Q_{I_q}^{(k-2)})|
\ge
(1-\eta)\prod_{j=1}^{k-2}\left(\frac1{a_j}\right)^{\binom{k-1}{j}}N^{k-1}\ge \eps N^{k-1},
\]
where the last inequality follows from  \eqref{eq:eps-multicase}.
Therefore, by~\eqref{eq:random-color-property-multicase},
\[
|\mathcal K_{k-1}(\hat Q_{I_q}^{(k-2)})\cap E(G_{\mathbf c_q})|
\ge
(1-\eps)
\frac{|\mathcal K_{k-1}(\hat Q_{I_q}^{(k-2)})|}{m_0}.
\]
Since $\phi_{I_q}(\mathbf c_q)=h(\mathbf y_q)$ is chosen as a majority $f_0$-color among those members of
$\mathcal K_{k-1}(\hat Q_{I_q}^{(k-2)})$ whose $\mathbf f$-color is $\mathbf c_q$, we obtain
\[
d\bigl(G_{\mathbf c_q,h(\mathbf y_q)}\mid \hat Q_{I_q}^{(k-2)}\bigr)
\ge
\frac{1-\eps}{m_0|\mathcal C_0|}
=
\frac{1-\eps}{m}
\ge
d_{k-1}.
\]
Moreover, by~\cref{claim:number of all good tuple}, the $(k-1)$-graph
$G_{\mathbf c_q,h(\mathbf y_q)}$
is $(\delta_{k-1},r(\mathbf a))$-regular with respect to $\hat Q_{I_q}^{(k-2)}$ for every $q\in[k]$.

Now consider the $(k-1)$-graph
\[
G':=\bigcup_{q=1}^k
G_{\mathbf c_q,h(\mathbf y_q)}
\Bigl[\bigcup_{r\in\Lambda_q}V_{\mathbf i_r}\Bigr].
\]
By~\cref{E-lemma}, $G'$ contains a copy of $K_k^{(k-1)}$ on the clusters $V_{\mathbf i_1},V_{\mathbf i_2},\dots,V_{\mathbf i_k}$.
Let this copy correspond to vertices $v_q\in V_{\mathbf i_q}$ for $q\in[k]$.
Since $i_1^1<i_2^1<\cdots<i_k^1$ and $\prec_1$ is the natural order, we have
$v_1<v_2<\cdots<v_k$.
Set $e:=\{v_1,\dots,v_k\}$.
For each $q\in[k]$, the $(k-1)$-set $e\setminus\{v_q\}$ belongs to
$G_{\mathbf c_q,h(\mathbf y_q)}$, so its $\mathbf f$-color is $\mathbf c_q$ and its $f_0$-color is $h(\mathbf y_q)$.

We now verify that $e\in E(H)$.
Fix $s\in[t]$.
Because $\alpha^*[(i_1^s,\dots,i_k^s)]=\tau_s$, the increasing rearrangement of the numbers
$i_1^s,\dots,i_k^s$ is
\[
i_{\tau_s^{-1}(1)}^s<i_{\tau_s^{-1}(2)}^s<\cdots<i_{\tau_s^{-1}(k)}^s.
\]
Hence the vertices of $e$ are ordered by $\prec_s$ as
\[
v_{\tau_s^{-1}(1)}\prec_s v_{\tau_s^{-1}(2)}\prec_s \cdots \prec_s v_{\tau_s^{-1}(k)}.
\]
For each $j\in[k]$, let $q=\tau_s^{-1}(j)$.
Then the $s$-th coordinate of the vector $\mathbf c_q$ is
\[
\begin{cases}
x_{1,q}=c_{1,j}, & \text{if } s=1,\\[1ex]
\operatorname{org}_s(x_{s,q})=c_{s,\tau_s(q)}=c_{s,j}, & \text{if } s\ge 2.
\end{cases}
\]
Therefore
\[
f_s(e\setminus\{v_{\tau_s^{-1}(j)}\})=c_{s,j}
\qquad\text{for every } j\in[k].
\]
It follows that
\[
\bigl(
f_s(e\setminus\{v_{\tau_s^{-1}(1)}\}),
f_s(e\setminus\{v_{\tau_s^{-1}(2)}\}),
\dots,
f_s(e\setminus\{v_{\tau_s^{-1}(k)}\})
\bigr)
=
(c_{s,1},c_{s,2},\dots,c_{s,k})
\in \mathcal T_s.
\]
Since this holds for every $s\in[t]$, the definition of $H$ gives $e\in E(H)$.

Finally, $v_1<\cdots<v_k$ and $f_0$ witnesses that $H^{\prec_1}$ is $\mathscr P_0$-colorable, so
\[
\bigl(h(\mathbf y_1),h(\mathbf y_2),\dots,h(\mathbf y_k)\bigr)\in \mathcal T_0.
\]
Therefore, $h$ is a homomorphism from
$\mathscr P_1\times\prod_{s=2}^t \mathscr P_s^{\mathcal S}$
to $\mathscr P_0$.
\end{proof}

We shall also need the following ordering lemma, which is due to Fishburn and Graham~\cite{F-G-order-93}. 

\begin{lemma}[\cite{F-G-order-93}]\label{lem:order lemma}
For every $n\in\mathbb N$ and $t\in\mathbb N$, there exists $R\in\mathbb N$ with the following property. For every linear order $\prec$ on $[R]^{t}$, there exist $n$-element sets
$J_1, \ldots, J_t \subseteq [R]$, a permutation $\sigma \in \mathbb S_t$, and a function $\zeta: [t] \to \{-1, +1\}$
such that the linear order $\prec$ of the $t$-tuples $(x_1, \ldots, x_t) \in J_1 \times \cdots \times J_t$ is the
lexicographic order given by $(\zeta(1)x_{\sigma(1)}, \dots, \zeta(t)x_{\sigma(t)})$, where for each $i\in [t]$, if $\zeta(i)=+1$, then the  $\sigma(i)$-th coordinate is compared
with the natural order on $[R]$, while if $\zeta(i)=-1$, it is compared
with the reverse natural order.
\end{lemma}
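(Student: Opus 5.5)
We prove \cref{lem:order lemma} by induction on $t$, using Ramsey's theorem on $2t$-subsets of $[R]$ to make the comparison pattern between pairs of tuples uniform. Fix a large $M$ and set $R:=R_{2t}(tM,\,L)$, where $L$ is the number of possible \emph{order types} defined below. Given a linear order $\prec$ on $[R]^t$, color each $2t$-subset $Y=\llbracket z_1,\dots,z_{2t}\rrbracket$ as follows. Split $Y$ into consecutive pairs $\{z_{2s-1},z_{2s}\}$ for $s\in[t]$. For every pair of $0/1$-vectors $(\mathbf a,\mathbf b)\in(\{0,1\}^t)^2$, record whether
\[
(z_{2-a_1},\dots,z_{2t-a_t})\prec(z_{2-b_1},\dots,z_{2t-b_t}).
\]
This gives at most $L=2^{4^t}$ colors. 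By Ramsey's theorem we obtain a homogeneous set $S$ of size $tM$, and we take $J_s$ to be the $s$-th block of $M$ consecutive elements of $S$. After this step, for any two tuples $\mathbf x,\mathbf y\in J_1\times\dots\times J_t$, whether $\mathbf x\prec\mathbf y$ depends only on the vector of coordinatewise comparisons $\mathrm{sgn}(x_s-y_s)\in\{-,0,+\}^t$.

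Unfortunately, the coloring above only covers pairs that differ in a single "interleaving" pattern. To handle general pairs, we refine the coloring so that it records, for each sign vector $\varepsilon\in\{-,0,+\}^t$, the outcome of comparing two tuples realizing $\varepsilon$ using two elements per coordinate. Homogeneity then yields a function $\Theta:\{-,0,+\}^t\setminus\{\mathbf 0\}\to\{\prec,\succ\}$ with $\Theta(-\varepsilon)$ opposite to $\Theta(\varepsilon)$, and $\mathbf x\prec\mathbf y$ if and only if $\Theta(\mathrm{sgn}(\mathbf x-\mathbf y))$ says so. This requires the blocks to be ordered, which is automatic since coordinate $s$ only ever uses elements of $J_s$.

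It remains to show that such a $\Theta$ must be lexicographic. The tool is transitivity: $\Theta$ must define a linear order on the grid $J_1\times\cdots\times J_t$ for $|J_s|\ge3$. Consider the sign vectors $e_s^{\pm}$, which are nonzero only in coordinate $s$, and set $\zeta(s)$ according to $\Theta(e_s^+)$. We claim there is a dominant coordinate $\sigma(1)$ such that $\Theta(\varepsilon)$ is determined by $\varepsilon_{\sigma(1)}$ whenever that entry is nonzero. Suppose instead that two coordinates $p,q$ each fail to dominate. Then there are sign vectors whose signs at $p$ and $q$ conflict, and we combine them through an intermediate tuple chosen using a third value in each coordinate (this is where $M\ge3$ is needed) to produce a $\prec$-cycle. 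Once $\sigma(1)$ is found, restricting to tuples with equal $\sigma(1)$-coordinate reduces the problem to $t-1$ coordinates, and induction, or simply repeating the argument, finishes the proof. The choice of $\zeta$ then accounts for reversed coordinates.

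The main obstacle is this combinatorial step, namely showing that a consistent sign rule $\Theta$ on $\{-,0,+\}^t$ yielding a transitive order must be lexicographic. I would first try the cycle argument: from $\Theta(\varepsilon)=\prec$ and $\Theta(\varepsilon')=\prec$, one should be able to derive $\Theta(\varepsilon\oplus\varepsilon')=\prec$ whenever the combination can be realized by actual tuples. This shows that the set of vectors with $\Theta=\prec$ is closed under this partial "addition", like a positive cone. A cone of this kind on sign vectors is known to be lexicographic.
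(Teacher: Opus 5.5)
The paper gives no proof of this lemma; it is quoted from Fishburn and Graham. Your proposal is therefore an attempt at a self-contained argument, and its first half is fine. With $M\ge\max\{n,3\}$, Ramsey on $2t$-subsets together with consecutive blocks $J_1<\cdots<J_t$ does make $\prec$ on $J_1\times\cdots\times J_t$ depend only on $\mathrm{sgn}(\mathbf x-\mathbf y)$. Two remarks on this part:
\begin{itemize}
\item Your first coloring already covers every sign vector, since $(a_s,b_s)\in\{(0,0),(1,1)\}$ gives sign $0$. So the ``refinement'' is redundant.
\item You do need to say how a coordinate with $x_s=y_s$ is handled. It is padded by another element of $J_s$, and choosing $x_s$ non-extremal in $J_s$ shows that both paddings give the same answer.
\end{itemize}

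The genuine gap is the step you flag yourself: you never prove that an antisymmetric, transitive sign rule $\Theta$ must be lexicographic, and the route you sketch cannot work as stated. A contradiction cannot come from ``two coordinates $p,q$ each fail to dominate''. For $t\ge 3$, in an honest lexicographic order every coordinate other than $\sigma(1)$ fails to dominate, with no contradiction. The argument must use the failure of all $t$ coordinates at once, and ``such cones are known to be lexicographic'' is an appeal, not an argument.

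Here is one way to close it.
\begin{itemize}
\item Let $Q$ be the set of nonzero $\varepsilon$ such that $\mathbf x\succ\mathbf y$ whenever $\mathrm{sgn}(\mathbf x-\mathbf y)=\varepsilon$. Then $Q$ and $-Q$ partition the nonzero sign vectors.
\item Transitivity, together with $|J_s|\ge 3$, gives a closure property. If $\alpha,\beta\in Q$, then $Q$ contains every nonzero $\gamma$ such that $\gamma_s$ equals the nonzero one among $\alpha_s,\beta_s$ (or $0$ if both vanish) where they do not conflict, and $\gamma_s$ is arbitrary where $\alpha_s=-\beta_s\ne 0$.
\item Pick $\zeta$ so that every unit vector $e_s^+$ lies in $Q$. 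Then $\{0,+\}^t\setminus\{\mathbf 0\}\subseteq Q$.
\item Let $u_p$ be $+$ at $p$ and $-$ elsewhere. If $u_p\in Q$, combining $u_p$ with a suitable vector of $\{0,+\}^t$ puts every $\varepsilon$ with $\varepsilon_p=+$ into $Q$, so $p$ dominates.
\item If no $u_p$ lies in $Q$, then every $-u_p\in Q$. Combining $-u_1,-u_2,\dots,-u_{t-1}$ successively, choosing $-$ on each conflicting coordinate, yields $(-,\dots,-,+)=u_t\in Q$. This contradicts $-u_t\in Q$.
\item The closure property survives restriction to $\{\varepsilon_p=0\}$, so you can recurse.
\end{itemize}
With this lemma inserted, your proof goes through.
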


Using~\cref{lem:order lemma}, we next prove the version of~\cref{lm4.8} for multiple palettes.

\begin{lemma}\label{thm6.dierorder} 
For every $k\ge 3$ and $t\in\mathbb N$, let $\mathscr{P}_1, \ldots, \mathscr{P}_t$ and $\mathscr{P}_0$ be $k$-palettes. Suppose that for every $s\in[t]$ there exist ordered $k$-graphs $H_s^+$ and $H_s^-$ such that
\begin{itemize}
\item $H_s^+$ and $H_s^-$ are $\mathscr P_s$-colorable,
\item the unordered $k$-graphs $\widetilde H_s^+$ and $\widetilde H_s^-$ are  $\mathscr P_{s'}$-colorable for every $s' \in [t] \setminus \{s\}$,
\item $H_s^+$ is not $\mathscr P_0$-colorable, and
\item $H_s^-$ is not ${\rm rev}(\mathscr{P}_0)$-colorable.
\end{itemize}
Then, there exists a $k$-graph $H$ that is $\mathscr{P}_s$-colorable for every $s \in [t]$ but not
$\mathscr P_0$-colorable.
\end{lemma}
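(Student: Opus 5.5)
We adapt the random construction from the proof of \cref{lm4.8}, now on the grid $[R]^t$ so that the vertex set carries $t$ independent ``coordinate orders'', one for each palette. Let $n$ exceed the number of vertices of all $H_s^\pm$, and let $R=R(n',t)$ be given by \cref{lem:order lemma}, where $n'$ is large enough that an $n$-element monotone subsequence can later be extracted along any coordinate. On a vertex set $X\cong[R]^t$ we define $t$ orders: $\prec_s$ is the lexicographic order in which coordinate $s$ is most significant, with the remaining coordinates breaking ties. For each $s$ we colour $\binom{X}{k-1}$ independently and uniformly with $\mathcal C_s$, giving $f_s$. We then declare a $k$-set $e$ to be an edge of the block $H_X$ if, for every $s$, listing $e$ in $\prec_s$-order gives an $f_s$-tuple in $\mathcal T_s$. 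By construction $H_X^{\prec_s}$ is $\mathscr P_s$-colourable for every $s$, so $H_X$ is $\mathscr P_s$-colourable for all $s$. The final graph $H$ consists of many copies of such blocks arranged along the edges of a linear $|X|$-graph $L$ on $[N]$, with independent colourings. As in \cref{lm4.8}, linearity together with $k\ge3$ makes the block events independent. Each block is $\mathscr P_s$-colourable via $\prec_s$, and distinct blocks share at most one vertex. However, a single global order and colouring of $H$ is needed for $\mathscr P_s$-colourability, so I would instead glue the blocks so that one global order restricts to $\prec_s$ on every block. Concretely, I would give $[N]$ itself a product structure $[M]^t$ and take only blocks that are sub-grids $J_1\times\cdots\times J_t$, using a linear family of such sub-grids.

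The core step is to show that, for any fixed linear order $\prec$ on a block $X$, the restriction $H_X^\prec$ fails to be $\mathscr P_0$-colourable with probability at least some $p>0$ depending only on the constants. By \cref{lem:order lemma}, $\prec$ restricted to some sub-grid $J_1\times\cdots\times J_t$ is lexicographic with respect to a permutation $\sigma$ and signs $\zeta$. Let $s=\sigma(1)$ be the most significant coordinate. We choose $n$ points of the sub-grid that differ in coordinate $s$ and agree in all other coordinates, arranging them so that every other order $\prec_{s'}$ restricted to these points is itself determined by the one varying coordinate, which we can take to be coordinate $s$. On this line $\prec$ coincides with $\prec_s$ when $\zeta(1)=+1$ and with its reverse when $\zeta(1)=-1$. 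We then embed $H_s^+$ in the first case and $H_s^-$ in the second. With probability at least $|\mathcal C_s|^{-n^{k-1}}$, the colouring $f_s$ on the line matches the witness colouring of $H_s^\pm$. The remaining palettes $s'\neq s$ only need $\widetilde H_s^\pm$ to be $\mathscr P_{s'}$-colourable with respect to whatever order $\prec_{s'}$ induces on the line.

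The main obstacle is that $\widetilde H_s^\pm$ is $\mathscr P_{s'}$-colourable only under \emph{some} order, not necessarily the one $\prec_{s'}$ induces. To handle this, I would first place the copy on a ``diagonal'' set of points whose $s'$-coordinates follow the witnessing order for $\mathscr P_{s'}$, so that $\prec_{s'}$ realizes the right order. The $s$-coordinates follow $\prec_s$, and on this set $\prec$, being lexicographic with top coordinate $s$, agrees with $\prec_s$ or its reverse. The catch is that the sub-grid from \cref{lem:order lemma} must be large enough to host such a diagonal, which is why $n'$ must be large. Once these ingredients are arranged, the matching $f_{s'}$-colourings also hold with positive probability, and the copy of $H_s^+$ or $H_s^-$ makes $H_X^\prec$ non-$\mathscr P_0$-colourable. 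The counting then finishes exactly as in \cref{lm4.8}: $N!\,(1-p)^{|E(L)|}<1$ for large $N$.
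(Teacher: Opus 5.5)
Your final argument (after you drop the independent-blocks idea and the axis-parallel line) is correct, and its core is the paper's. You choose $G=H^{+}_{\sigma(1)}$ or $H^{-}_{\sigma(1)}$ according to $\zeta(1)$ from \cref{lem:order lemma}. You then place each $w\in V(G)$ at $(\iota_1(w),\dots,\iota_t(w))$, where $\iota_s$ is an order-preserving injection of the witness order $<_s$ into $J_s$, so that each $\prec_s$ and the test order $\prec$ are read off from a single coordinate. This only needs $|J_s|\ge |V(G)|$, so $n'=n$ suffices.

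The real difference is how the global orders $\prec_s$ are made to respect the coordinates on many blocks. The paper chooses $\prec_1,\dots,\prec_t$ uniformly at random on $[N]$ and takes an arbitrary linear $R^t$-graph, which has at least $N^2/R^{4t}$ edges by the greedy count of \cref{lm4.8}. Each edge is coordinate-consistent for all $\prec_s$ with probability at least $((R^t)!)^{-t}$, so some fixed choice of orders leaves $\Omega(N^2)$ good blocks.

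You instead fix lexicographic orders on $[N]=[M]^t$. This makes every sub-grid automatically good, but forces the blocks to be sub-grids, and then the count is not ``exactly as in \cref{lm4.8}''. Many sub-grids pass through two points on a common axis-parallel line, so the greedy packing bound becomes
\[
\frac{\binom{M}{R}^t}{R^{2t}\binom{M-1}{R-1}^{t-1}\binom{M-2}{R-2}}\ \ge\ \frac{M^{t+1}}{2R^{3t+1}},
\]
which is of order $N^{1+1/t}$ rather than $N^2$. That still suffices, since $N!\,(1-p)^{M^{t+1}/(2R^{3t+1})}<1$ as soon as $pM>2tR^{3t+1}\log M$. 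However, you assert this step rather than check it. Naively taking products of a linear family of $R$-subsets of $[M]$ does not give a linear family at all: two such products can share $R^{t-1}$ points.

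In short, the paper's random orders give a design-free packing with quadratically many blocks, at the harmless cost of a $((R^t)!)^{-t}$ factor. Your deterministic orders avoid that conditioning step but require this separate packing estimate for sub-grids.
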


\begin{proof}
Fix the $k$-palettes $\mathscr{P}_s = (\mathcal C_s, \mathcal{T}_s)$ for $s \in [t] \cup \{0\}$ and the ordered $k$-graphs
$H_s^+$ and $H_s^-$ for $s \in [t]$, as in the statement of the lemma. Let \[
c:=\max \{|\mathcal C_s|:s\in[t]\},
\qquad
n:=\max\bigl\{|V(H_s^+)|,\ |V(H_s^-)|:s\in[t]\bigr\},
\qquad
p:=c^{-t n^{k-1}}.
\]
Apply \cref{lem:order lemma} with $n$ and $t$ to obtain an integer $R$. Choose $N$ sufficiently large such that
\begin{equation}\label{eq:random-multicase}
N!\cdot (1-p)^{N^2/(R^{4t}(R^t!)^t)}<1.
\end{equation}
Such an $N$ exists because $N!$ is at most $e^{N \log N}$ and the integers $c, n, t$ and $R$ are fixed.

We now define a random $k$-graph $H$ on the vertex set $[N]$. First choose linear orders $\prec_1, \dots, \prec_t$ on $[N]$ independently and uniformly at random. Next, for each $(k-1)$-subset $X\in\binom{[N]}{k-1}$ choose independently and uniformly a color vector \[
\mathbf f(X)=\bigl(f_1(X),\dots,f_t(X)\bigr)\in \mathcal C_1\times\cdots\times\mathcal C_t.
\] Finally, a $k$-subset $e=\{v_1,\dots,v_k\}\subset [N]$ forms an edge of $H$ if and only if for every $s\in[t]$, when the vertices of $e$ are listed in increasing order with respect to $\prec_s$ as $v_1'\prec_s \dots \prec_s v'_k$, we have
\[(f_s(e\backslash\{v_1'\}),f_s(e\backslash\{v_{2}'\}),\dots,f_s(e\backslash\{v_k'\}))\in\mathcal{T}_s.\]  
By construction, the resulting $k$-graph $H$ is $\mathscr P_s$-colorable for every $s\in[t]$.

We now show that with positive probability $H$ is not $\mathscr P_0$-colorable. Let $L$ be a linear $R^t$-graph on $[N]$ with the maximum possible number of edges. Since an $R^t$-edge can intersect fewer than
\[
\binom{R^t}{2}\binom{N-2}{R^t-2}
\]
other $R^t$-edges in the complete $R^t$-graph on $[N]$, we have
\[
|E(L)|\ge
\frac{\binom{N}{R^t}}{\binom{R^t}{2}\binom{N-2}{R^t-2}}
=
\frac{2N(N-1)}{R^{2t}(R^t-1)^2}
\ge
\frac{N^2}{R^{4t}}.
\]
Enumerate the edges of $L$ as $F_1,\dots,F_m$ with $m:=|E(L)|$.
Index the vertices of each $F_i$ by the set $[R]^t$, writing
$F_i=\{v_{\mathbf j}^i:\mathbf j\in[R]^t\}$.
For each $i\in[m]$ and each $s\in[t]$, let $E_s^i$ be the event that the order $\prec_s$ is consistent with the $s$-th coordinate on $F_i$, that is,
\[
v_{\mathbf j}^i\prec_s v_{\mathbf j'}^i
\qquad\text{whenever}\qquad
j_s<j_s'.
\]
Since among the $(R^t)!$ possible orders of the vertices of $F_i$ at least one has this property, we have
\[
\mathbb P(E_s^i)\ge \frac1{(R^t)!}.
\]
Since the orders $\prec_1,\dots,\prec_t$ are chosen independently, we have
\[
\mathbb P\Bigl(\bigcap_{s=1}^t E_s^i\Bigr)\ge \frac1{(R^t!)^t}.
\]
We call $F_i$ \emph{good} if all events $E_s^i$ occur. Hence the expected number of good sets is at least
\[
m\cdot \frac1{(R^t!)^t}
\ge
\frac{N^2}{R^{4t}(R^t!)^t}.
\]
Therefore there exists a choice of the orders
$\prec_1,\dots,\prec_t$
for which the number of good sets is at least
\[
M:=\frac{N^2}{R^{4t}(R^t!)^t}.
\]
Fix such a choice of the orders. From now on, only the color vectors $\mathbf f(X)$ remain random.

Fix any linear order $\prec$ on $[N]$. For each good set $F_i$, identify it with $[R]^t$ via the above indexing and apply \cref{lem:order lemma} to the order $\prec$ restricted to $F_i$. 
Then there exist $n$-element sets
$J_1, \ldots, J_t \subseteq [R]$,
a permutation $\sigma$ of $[t]$, and a sign function
$\zeta:[t]\to\{-1,+1\}$ such that the restriction of $\prec$ to
$J_1\times\cdots\times J_t$
is lexicographic in the signed permuted coordinates
$\bigl(\zeta(1)x_{\sigma(1)},\dots,\zeta(t)x_{\sigma(t)}\bigr)$.
In particular, on the vertices of $F_i$ indexed by
$J_1\times\cdots\times J_t$, one of the following the
conclusions holds:
\begin{itemize}
\item if $\zeta(1)=1$, $v_{\mathbf j}^i \prec v_{\mathbf j'}^i$ for  $\mathbf j=(j_1,\ldots,j_t)$, $\mathbf j'=(j_1',\ldots,j_t') \in J_1 \times \cdots \times J_t$ with $j_{\sigma(1)} < j_{\sigma(1)}'$;
\item if $\zeta(1)=-1$, $v_{\mathbf j}^i \prec v_{\mathbf j'}^i$ for  $\mathbf j=(j_1,\ldots,j_t)$, $ \mathbf j'=(j_1',\ldots,j_t') \in J_1 \times \cdots \times J_t$ with $j_{\sigma(1)} > j_{\sigma(1)}'$.
\end{itemize}

We now choose one of the ordered $k$-graphs $H_{\sigma(1)}^+$ and $H_{\sigma(1)}^-$
depending on the sign of $\zeta(1)$. If $\zeta(1)=+1$, let $G:=H_{\sigma(1)}^+$.
If $\zeta(1)=-1$, let $G:=H_{\sigma(1)}^-$.
For each $s\in[t]$, choose an ordering $<_s$ of $V(G)$ as follows:
\begin{itemize}
\item if $s=\sigma(1)$, take the given order of $G$ witnessing that $G$ is $\mathscr P_{\sigma(1)}$-colorable;
\item if $s\neq \sigma(1)$, choose any order of $V(G)$ witnessing that the $k$-graph $\widetilde{G}$ is $\mathscr P_s$-colorable.
\end{itemize}
Since $|J_s|=n\ge |V(G)|$ for every $s\in[t]$, 
we choose, for each $s\in[t]$, an injective map
$\iota_s:V(G)\to J_s$
that preserves the order $<_s$. For each vertex $w\in V(G)$, set
\[
\mathbf j(w):=\bigl(\iota_1(w),\dots,\iota_t(w)\bigr)\in J_1\times\cdots\times J_t.
\]
Since $F_i$ is good, for each $s\in[t]$ the order $\prec_s$ on the vertices $\{v_{\mathbf j(w)}^i: w\in V(G)\}$
agrees with the chosen order $<_s$. 
Thus, if the random color vectors on all $(k-1)$-subsets of these vertices match the witness colorings for $\mathscr P_s$-colorable of $G$ described above, then
there exists a copy of $G$ on the vertices $\{v_{\mathbf j(w)}^i: w\in V(G)\}$. Note that this happens with probability at least
\[
\prod_{j=1}^t |\cc_j|^{-\binom{|V(G)|}{k-1}} \geq \prod_{j=1}^t |\cc_j|^{-n^{k-1}} \geq c^{-tn^{k-1}}=p.
\]

If $\zeta(1)=+1$, then the order $\prec$ on these vertices agrees with the $\sigma(1)$-th coordinate order. Thus, $H^{\prec}[F_i]$
contains an ordered copy of $H_{\sigma(1)}^+$. Since $H_{\sigma(1)}^+$ is not $\mathscr P_0$-colorable, this implies that $H^{\prec}[F_i]$ is not $\mathscr P_0$-colorable.
If $\zeta(1)=-1$, then the order $\prec$ on these vertices is the reverse of the $\sigma(1)$-th coordinate order. 
Let
$\rm{rev}(H_{\sigma(1)}^{-})$
denote the ordered $k$-graph obtained from $H_{\sigma(1)}^{-}$ by reversing its linear order. Then
$H^{\prec}[F_i]$ contains an ordered copy of $\rm{rev}(H_{\sigma(1)}^{-})$. Since $\rm{rev}(H_{\sigma(1)}^{-})$ is not $\mathscr P_0$-colorable, this again implies that $H^{\prec}[F_i]$ is not $\mathscr P_0$-colorable.
Therefore, for every good set $F_i$, we have
\[
\mathbb P\bigl(H^{\prec}[F_i]\text{ is }\mathscr P_0\text{-colorable}\bigr)\le 1-p.
\]

Since the events
$\bigl\{H^{\prec}[F_i]\text{ is }\mathscr P_0\text{-colorable}\bigr\}$
over all good sets $F_i$ are independent, we obtain
\[
\mathbb P\bigl(H^{\prec}\text{ is }\mathscr P_0\text{-colorable}\bigr)\le (1-p)^M.
\]

Finally, taking the union bound over all $N!$ linear orders on $[N]$, we conclude that
\[
\mathbb P\bigl(H\text{ is }\mathscr P_0\text{-colorable}\bigr)
\le
N!\cdot (1-p)^M
<
1
\]
by \eqref{eq:random-multicase}. Therefore, with positive probability, the random $k$-graph $H$ is $\mathscr P_s$-colorable for every $s\in[t]$ but not $\mathscr P_0$-colorable. 
\end{proof}

Finally, we use \cref{lemma: key lemma for classification for multicase} and \cref{thm6.dierorder} to prove \cref{thm: classification for multicase}.

\begin{proof}[Proof of  \cref{thm: classification for multicase}]
Suppose first that there exists a $k$-graph $H$ that is $\mathscr P_i$-colorable for every $i\in[t]$ but not $\mathscr P_0$-colorable. Fix $j\in[t]$. Let $\prec$ be an order witnessing that $H$ is $\mathscr P_j$-colorable. By \cref{obs:symmetrization-order}, for every $i\in[t]\setminus\{j\}$, the ordered $k$-graph $H^\prec$ is $\mathscr P_i^{\mathcal S}$-colorable. Hence $H^\prec$ is colorable by the $k$-palette $\mathscr P_j\times\prod_{i\in[t]\setminus\{j\}}\mathscr P_i^{\mathcal S}$.
If there exists a homomorphism from this $k$-palette to $\mathscr P_0$, then $H$ would be $\mathscr P_0$-colorable, a contradiction.  Thus, no such homomorphism exists.

Conversely, suppose that for every $j\in[t]$, there is no homomorphism from
$\mathscr P_j\times\prod_{i\in[t]\setminus\{j\}}\mathscr P_i^{\mathcal S}$
to $\mathscr P_0$ or to ${\rm rev}(\mathscr P_0)$. Fix $i\in [t]$.
By \cref{lemma: key lemma for classification for multicase}, there exists an ordered $k$-graph $H_i^{+}$ that is ${\mathscr P}_i$-colorable but not ${\mathscr P}_0$-colorable, and $\widetilde H_i^{+}$ is ${\mathscr P}_{i'}$-colorable, for every $i' \in [t] \setminus \{i\}$. Applying the same contrapositive of~\cref{lemma: key lemma for classification for multicase} with ${\rm rev}(\mathscr P_0)$,
there exists an ordered $k$-graph $H_i^{-}$ that is ${\mathscr P}_i$-colorable but not ${\rm rev}({\mathscr P}_0)$-colorable, and $\widetilde H_i^{-}$ is ${\mathscr P}_{i'}$-colorable for every $i' \in [t] \setminus \{i\}$.

Thus, the hypotheses of \cref{thm6.dierorder} are satisfied. By \cref{thm6.dierorder}, there exists a $k$-graph $H$ that is ${\mathscr P}_i$-colorable for every $i \in [t]$ but not ${\mathscr P}_0$-colorable.
\end{proof}

%%%%%%%%%%%%%%%%%%%%%%5
\section{New values of $(k-2)$-uniform Tur\'an density}\label{sec:example}

In this section, we shall use the
following consequence of \cref{Thm:main theorem 1} and
\cref{thm: classification for multicase} to prove~\cref{thm:values of k-2-pi} and~\cref{thm:non-principal family}.

\begin{lemma}\label{lem:palette-certificate}
Given $k\ge 3$ and $t\in \mathbb N$, let $\alpha\in[0,1]$, and let
$\mathscr P_0,\mathscr P_1,\dots,\mathscr P_t$ be $k$-palettes such that
$d(\mathscr P_0)=\alpha$. Suppose that the following two conditions hold.
\stepcounter{propcounter}
\begin{enumerate}[label = {{\rm (\Alph{propcounter}\arabic{enumi})}}]
\item\label{palette step 1} For every $i\in[t]$, there is no homomorphism from
\[
\mathscr P_i\times \prod_{j\in[t]\setminus\{i\}}\mathscr P_j^{\mathcal S}
\]
to $\mathscr P_0$ or to ${\rm rev}(\mathscr P_0)$.
\item \label{palette step 2} For every $k$-palette $\mathscr P$ with $d(\mathscr P)>\alpha$, there exists
$i\in[t]$ such that $\mathscr P_i$ admits a homomorphism to $\mathscr P$ or to
${\rm rev}(\mathscr P)$.
\end{enumerate}
Then there exists a $k$-graph $F$ with
$\pi_{k-2}(F)=\alpha$ that is $\mathscr{P}_i$-colorable for $i\in[t]$ but not $\mathscr{P}_0$-colorable.
\end{lemma}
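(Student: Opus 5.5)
Apply \cref{thm: classification for multicase} to obtain the graph $F$. The converse direction of that theorem needs exactly condition~\ref{palette step 1}, so it gives a $k$-graph $F$ that is $\mathscr P_i$-colorable for every $i\in[t]$ but not $\mathscr P_0$-colorable. It remains to show $\pi_{k-2}(F)=\alpha$. Both inequalities will be derived from \cref{Thm:main theorem 1}, which says $\pi_{k-2}(F)=\pi^{\pal}_k(\{F\})$.

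For the lower bound, $F$ is not $\mathscr P_0$-colorable. Hence $\mathscr P_0$ is one of the palettes in the supremum defining $\pi^{\pal}_k(\{F\})$, or directly \cref{thm: palette lower bound} applies, giving $\pi_{k-2}(F)\ge d(\mathscr P_0)=\alpha$.

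For the upper bound, let $\mathscr P$ be any $k$-palette with $d(\mathscr P)>\alpha$; we show $F$ is $\mathscr P$-colorable.
\begin{itemize}
\item By condition~\ref{palette step 2}, there is $i\in[t]$ with a homomorphism $\mathscr P_i\to\mathscr P$ or $\mathscr P_i\to{\rm rev}(\mathscr P)$.
\item Since $F$ is $\mathscr P_i$-colorable, the basic homomorphism observation shows $F$ is $\mathscr P$-colorable or ${\rm rev}(\mathscr P)$-colorable.
\item In the second case, reversing the witnessing vertex order shows $F$ is again $\mathscr P$-colorable.
\end{itemize}
Therefore every palette avoiding $F$ has density at most $\alpha$, so $\pi^{\pal}_k(\{F\})\le\alpha$, and \cref{Thm:main theorem 1} gives $\pi_{k-2}(F)\le\alpha$.

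There is no real obstacle here, since the heavy lifting is in the cited theorems. The one point to check is that the hypothesis of \cref{thm: classification for multicase} matches condition~\ref{palette step 1} verbatim, including the reverse palette, and this is the case. Note also that the supremum in $\pi^{\pal}_k$ is over palettes with density at most $\alpha$, and the bound $\le\alpha$ holds even though the supremum need not be attained.
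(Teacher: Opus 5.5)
Your proposal is correct and matches the paper's proof essentially step for step. Both obtain $F$ from \cref{thm: classification for multicase} via condition (A1), and both derive the two inequalities from \cref{Thm:main theorem 1}. The upper bound uses condition (A2) together with the homomorphism and reversal observations, exactly as the paper does.
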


\begin{proof}
By \cref{thm: classification for multicase}, condition~\ref{palette step 1}
gives a $k$-graph $F$ that is $\mathscr P_i$-colorable for every
$i\in[t]$ but is not $\mathscr P_0$-colorable.  Hence, by
\cref{Thm:main theorem 1},
\[
\pi_{k-2}(F)=\pi_k^{\rm pal}(F)\ge d(\mathscr P_0)=\alpha .
\]
On the other hand, let $\mathscr P$ be any $k$-palette with
$d(\mathscr P)>\alpha$.  By condition~\ref{palette step 2}, there exists
$i\in[t]$ such that $\mathscr P_i$ admits a homomorphism to $\mathscr P$ or to
${\rm rev}(\mathscr P)$.  Since $F$ is $\mathscr P_i$-colorable, it follows
that $F$ is $\mathscr P$-colorable; in the second case, we can reverse the
vertex order witnessing the ${\rm rev}(\mathscr P)$-coloring.  Thus, no
$k$-palette of density greater than $\alpha$ contributes to the supremum in
$\pi_k^{\rm pal}(F)$.  Again using \cref{Thm:main theorem 1}, we get
$\pi_{k-2}(F)=\pi_k^{\rm pal}(F)\le \alpha$.
\end{proof}

\begin{remark}
\cref{lem:palette-certificate} serves as a useful tool that reduces the problem of determining the exact value of $\pi_{k-2}$ to two finite homomorphism problems for palettes.
In the applications below, the $k$-palettes $\mathscr P_i$ are chosen such that the
non-existence of homomorphisms to $\mathscr P_0$ is witnessed by a small
``forbidden coordinate pattern'', while the upper bound follows from some
extremal argument on coordinate-support sets or auxiliary digraphs.
This is the main advantage of using \cref{lem:palette-certificate}:
it separates the construction of the forbidden $k$-graph from the verification
of the density threshold.
\end{remark}

Before proving~\cref{thm:values of k-2-pi}, we first introduce two elementary extremal results for digraphs that will later be used in the proof of~\cref{thm:values of k-2-pi}. 

Recall that a \emph{transitive tournament} $T_r$ is a digraph on $r$ vertices where the vertices can be ordered as $v_1, \dots, v_r$ such that $v_iv_j$ is an arc of $T_r$ for all $ 1\le i<j\le r$. The following lemma follows from Brown and Harary~\cite{brown1970extremal}, by noting that they determined the Tur\'an number of tournaments.

\begin{lemma}[\cite{brown1970extremal}]\label{lem:turan number of tournaments}
For any $n\ge r\ge 2$, if a loopless digraph $D$ on $n$ vertices contains no copy of
$T_{r+1}$, then the number of arcs of $D$ is at most $\frac{r-1}{r}n^2$.
\end{lemma}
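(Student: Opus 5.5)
The plan is to reduce to the undirected Turán theorem by averaging over a uniformly random linear order of $V(D)$. The idea is that, once an order is fixed, the arcs pointing ``forward'' form an ordinary graph. A clique in that graph is precisely a transitive tournament in $D$.

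Fix a linear order $v_1,\dots,v_n$ of $V(D)$ and let $F_\prec$ be the (simple, undirected) graph on $V(D)$ in which $v_iv_j$ with $i<j$ is an edge if and only if the arc $v_i\to v_j$ belongs to $D$. First I would check that $F_\prec$ is $K_{r+1}$-free. Suppose $v_{i_1},\dots,v_{i_{r+1}}$ with $i_1<\cdots<i_{r+1}$ formed a clique in $F_\prec$. Then every arc $v_{i_a}\to v_{i_b}$ with $a<b$ is present in $D$. So these vertices span a copy of $T_{r+1}$ as a subdigraph; any additional backward arcs are irrelevant, since we only need a subgraph copy. This contradicts the hypothesis. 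By Turán's theorem, $e(F_\prec)\le \frac{r-1}{r}\cdot\frac{n^2}{2}$ for every order $\prec$.

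Next I would average over a uniformly random order $\prec$. Since $D$ is loopless, each arc $u\to w$ of $D$ has $u\ne w$, and it is counted in $F_\prec$ exactly when $u\prec w$, which happens with probability $1/2$. Two opposite arcs $u\to w$ and $w\to u$ are never counted simultaneously, but each is counted with probability $1/2$, so linearity of expectation still applies. Hence
\[
\mathbb E\bigl[e(F_\prec)\bigr]=\tfrac12\,|A(D)|,
\]
and combining with the Turán bound gives $|A(D)|\le 2\cdot\frac{r-1}{r}\cdot\frac{n^2}{2}=\frac{r-1}{r}n^2$, as required.

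There is no serious obstacle here. The only point needing care is the correspondence between cliques of $F_\prec$ and copies of $T_{r+1}$ in the presence of $2$-cycles. The definition of $F_\prec$ keeps an edge whenever the forward arc exists, regardless of whether the backward arc also exists, so the clique-to-$T_{r+1}$ implication remains valid. The assumption $n\ge r$ is not needed by this argument.
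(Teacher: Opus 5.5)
Your proof is correct. Note that the paper gives no argument for this lemma; it only cites Brown and Harary's determination of the extremal number for transitive tournaments.

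Your route is a self-contained reduction to Turán's theorem. Once an order is fixed, the forward arcs form a $K_{r+1}$-free graph, because a forward clique is exactly a (not necessarily induced) copy of $T_{r+1}$. Each arc is then forward for half of all orders.

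You can drop the averaging. Take any order $\prec$ and its reverse $\succ$. Each arc of $D$ is forward in exactly one of the two, so
\[
|A(D)| = e(F_\prec)+e(F_\succ) \le 2\,\mathrm{ex}(n,K_{r+1}) \le \frac{r-1}{r}n^2 .
\]
The middle quantity is the exact extremal number. It is attained by the complete balanced $r$-partite digraph with arcs in both directions between parts, and, as you note, no hypothesis $n\ge r$ is needed.

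The citation buys the paper brevity and the characterization of the extremal digraphs. Your argument gives a short elementary proof, and it makes explicit that ``copy'' must mean a not-necessarily-induced copy. That is exactly how the paper uses the lemma when it extracts a transitive tournament from the digraphs $L$ and $R$. Under the induced reading the lemma would be false: the complete digraph with both arcs between every pair contains no induced $T_{r+1}$.
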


A \emph{directed walk} of length $r$ in a digraph $D$ is a sequence $(v_0, v_1, \dots, v_r)$ such that $v_{i-1}v_i$ is an arc in $D$ for each $1\le i\le r$. The vertices and arcs in a directed walk are allowed to repeat.

\begin{lemma}\label{lem:di-walk}
For any $n> r\ge 1$, if a digraph $D$ on $n$ vertices contains no directed walk of
length $r$, then the number of arcs of $D$ is at most $\frac{r-1}{2r}n^2$.
\end{lemma}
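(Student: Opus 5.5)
The plan is to show that $D$ is acyclic with a bounded longest path, split $V(D)$ into $r$ levels, and then apply Tur\'an's bound for $r$-partite graphs.

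First, $D$ contains no directed cycle, and in particular no loop and no pair of opposite arcs $uv,vu$. Indeed, if $D$ had a directed cycle $C$ (a loop or a $2$-cycle included), then walking around $C$ repeatedly would produce a directed walk of any prescribed length, in particular of length $r$. So $D$ is acyclic. In an acyclic digraph every directed walk is a directed path, since it cannot revisit a vertex. The hypothesis therefore says exactly that every directed path in $D$ has at most $r-1$ arcs.

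Next, for each vertex $v$ let $h(v)$ be the number of arcs in a longest directed path of $D$ ending at $v$. By the previous step, $h(v)\in\{0,1,\dots,r-1\}$. If $uv$ is an arc, then appending it to a longest path ending at $u$ gives a path ending at $v$. This path is a genuine path by acyclicity, so $h(v)\ge h(u)+1$. Setting $L_i:=\{v:h(v)=i\}$ for $0\le i\le r-1$ gives a partition of $V(D)$ into $r$ classes such that every arc goes from a class $L_i$ to a class $L_j$ with $i<j$. In particular, no arc lies inside a class.

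Finally, forget orientations. Since $D$ has no loops and no $2$-cycles, each unordered pair of vertices carries at most one arc. Hence $|E(D)|$ is at most the number of edges of the complete $r$-partite graph with parts $L_0,\dots,L_{r-1}$. This number equals $\tfrac12\bigl(n^2-\sum_i|L_i|^2\bigr)$. By Cauchy--Schwarz, $\sum_i|L_i|^2\ge n^2/r$, so $|E(D)|\le \tfrac12\bigl(n^2-n^2/r\bigr)=\frac{r-1}{2r}n^2$, as required.

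The only delicate point is the first step: we must make sure that loops and antiparallel pairs are excluded, so that each pair of vertices carries at most one arc. This follows directly from the cycle-repetition argument. The assumption $n>r$ is not used.
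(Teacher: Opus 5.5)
Your proof is correct and is essentially the paper's argument. You partition the vertices by the length of a longest walk (path) ending at each vertex, observe that every arc goes strictly upward between levels, and bound the arc count by $\sum_{i<j}|L_i||L_j|=\frac12\bigl(n^2-\sum_i|L_i|^2\bigr)\le\frac{r-1}{2r}n^2$. The only difference is that the paper works with walks directly, so $\ell(v)\ge\ell(u)+1$ holds immediately for every arc $uv$; this already excludes loops and antiparallel arcs and makes your preliminary acyclicity step unnecessary, though harmless.
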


\begin{proof}
For each vertex $v\in V(D)$, let $\ell(v)$ be the maximum length of a directed
walk ending at $v$.  Since there is no directed walk of length $r$, we have
$\ell(v)\in\{0,1,\dots,r-1\}$. Let $L_i=\{v\in V(D):\ell(v)=i\}$ for each $0\le i\le r-1$.
Note that every arc goes from some $L_i$ to some $L_j$ with $i<j$.  Hence,
\[
|E(D)|
\le \sum_{0\le i<j\le r-1}|L_i||L_j|
=
\frac12\left(n^2-\sum_{i=0}^{r-1}|L_i|^2\right)
\le
\frac12\left(n^2-\frac{n^2}{r}\right)
=
\frac{r-1}{2r}n^2.
\]
\end{proof}

\begin{proof}[Proof of \cref{thm:values of k-2-pi}]
We prove each value in~\cref{thm:values of k-2-pi} by~\cref{lem:palette-certificate}, respectively.

\smallskip
\noindent\textbf{(1) Proving $\frac{r-1}{r}\in \Pi^{(k)}_{\rm u}$.}
Let
\[
\mathscr P_0=([r],\mathcal T_0),
\qquad
\mathcal T_0=\{(x_1,\dots,x_k)\in[r]^k:x_1\ne x_2\}.
\]
Then $d(\mathscr P_0)=(r-1)/r$.  Let $T_{r+1}$ and $T'_{r+1}$ be two disjoint transitive tournaments with
$V(T_{r+1})=\{p_1,\dots,p_{r+1}\}$ and 
$V(T'_{r+1})=\{q_1,\dots,q_{r+1}\}$. Define a $k$-palette $\mathscr P_1=(\mathcal C_1,\mathcal T_1)$ as follows. For every
arc $p_ap_b$ of $T_{r+1}$ with $a<b$, put one $k$-tuple $(p_a, p_b, \alpha^{ab}_3,\dots, \alpha^{ab}_k)$
into $\mathcal T_1$, where all filler colors are pairwise distinct.
For every arc $q_aq_b$ of $T'_{r+1}$ with $a<b$, put one $k$-tuple $(\beta^{ab}_1,\dots,\beta^{ab}_{k-2},q_a,q_b)$ into $\mathcal T_1$, again using new pairwise distinct filler colors. $\mathcal C_1$ consists of all colors in the $k$-tuples of $\mathcal T_1$.

We first verify condition~\ref{palette step 1} of
\cref{lem:palette-certificate}. There is no homomorphism from $\mathscr P_1$ to $\mathscr P_0$. Indeed, any
map from $\{p_1,\dots,p_{r+1}\}$ to $[r]$ identifies two vertices, and the
corresponding arc of $T_{r+1}$ is mapped to a $k$-tuple whose
first two coordinates are equal, which is forbidden in $\mathcal T_0$. Similarly, there is no homomorphism from $\mathscr P_1$ to
${\rm rev}(\mathscr P_0)$, using the arc of $T'_{r+1}$ and the last two coordinates.

Next, we verify condition~\ref{palette step 2} of
\cref{lem:palette-certificate}. 
Let $\mathscr P=(\mathcal C,\mathcal T)$ be a $k$-palette with
$d(\mathscr P)>(r-1)/r$. Define two auxiliary digraphs $L$ and $R$ on
$\mathcal C$ by
\[
ab\in E(L)
\quad\Longleftrightarrow\quad
\text{there exists }(x_1,\dots,x_k)\in\mathcal T
\text{ with }x_1=a,\ x_2=b,
\]
and
\[
ab\in E(R)
\quad\Longleftrightarrow\quad
\text{there exists }(x_1,\dots,x_k)\in\mathcal T
\text{ with }x_{k-1}=a,\ x_k=b.
\]
Since $|\mathcal T|\le |E(L)|\cdot|\mathcal C|^{k-2}$ and $|\mathcal T|\le |E(R)|\cdot|\mathcal C|^{k-2}$, 
we have $|E(L)|, |E(R)|>\frac{r-1}{r}|\mathcal C|^2$. Combined with~\cref{lem:turan number of tournaments}, both $L$ and $R$ either contain a loop or a transitive tournament of $r+1$ vertices.
If $L$ contains a loop, then there is a map from $\mathcal C_1$ to $\mathcal C$ by sending all vertices $p_1,\dots,p_{r+1}$ to the loop vertex and using a witnessing admissible tuple
for each arc.  
If $L$ contains a copy of $T_{r+1}$, map $p_1,\dots,p_{r+1}$ to the vertices of
this copy in the transitive order. Since all other colors in the admissible tuples associated with $T_{r+1}$ are distinct, this gives a homomorphism from the first tournament gadget to $\mathscr P$. 
By symmetry, if $R$ contains a loop or a transitive tournament of $r+1$ vertices,
we similarly obtain a homomorphism from the second tournament gadget to $\mathscr P$.
Since the two parts of
$\mathscr P_1$ use distinct colors, these two maps combine to a homomorphism
from $\mathscr P_1$ to $\mathscr P$.

Thus, the two conditions of \cref{lem:palette-certificate} hold with $t=1$ and
$\alpha=(r-1)/r$, implying $\alpha=(r-1)/r \in \Pi^{(k)}_{\rm u}$.

\smallskip
\noindent{\bf (2) Proving $\frac{(r-1)^2}{r^2}\in \Pi^{(k)}_{\rm u}$.} For $k=3$, this value was obtained in \cite[Theorem~1.9]{LWZZ2025}.  We give
the proof for $k\ge4$.  Let
\[
\mathscr P_0=([r],\mathcal T_0),\qquad
\mathcal T_0=
\{(x_1,\dots,x_k)\in[r]^k:x_1\ne x_2
\text{ and }x_{k-1}\ne x_k\}.
\]
Since the two constrained pairs of coordinates are disjoint for $k\ge4$, we have $d(\mathscr P_0)=\frac{(r-1)^2}{r^2}$. Moreover, $\mathscr P_0={\rm rev}(\mathscr P_0)$.

Let $T_{r+1}$ be a transitive tournament on vertices $p_1,\dots,p_{r+1}$.  Define a $k$-palette $\mathscr P_1=(\mathcal C_1,\mathcal T_1)$ as follows. For every
arc $p_ap_b$ of $T_{r+1}$ with $a<b$, put one $k$-tuple $(p_a, p_b, \alpha^{ab}_3,\dots, \alpha^{ab}_k)$ into $\mathcal T_1$, where all filler colors are pairwise distinct.  Let
$\mathcal C_1$ be the set of all colors appearing in these tuples.

There is no homomorphism from $\mathscr P_1$ to $\mathscr P_0$.  Indeed, any
map from $\{p_1,\dots,p_{r+1}\}$ to $[r]$ identifies two vertices, say
$p_a$ and $p_b$ with $a<b$.  Since $p_ap_b$ is an arc of $T_{r+1}$, the
corresponding admissible tuple of $\mathscr P_1$ is mapped to a tuple whose
first two coordinates are equal, which is forbidden in $\mathcal T_0$.
Since $\mathscr P_0={\rm rev}(\mathscr P_0)$, the same argument excludes
homomorphisms to ${\rm rev}(\mathscr P_0)$.

Let $\mathscr P=(\mathcal C,\mathcal T)$ be a $k$-palette with $d(\mathscr P)>\frac{(r-1)^2}{r^2}$.
Define two digraphs $L$ and $R$ on $\mathcal C$ by recording the first pair and
the last pair of an admissible tuple:
\[
ab\in E(L)
\Longleftrightarrow
\text{there exists }(x_1,\dots,x_k)\in\mathcal T\text{ with }x_1=a,\ x_2=b,
\]
and
\[
ab\in E(R)
\Longleftrightarrow
\text{there exists }(x_1,\dots,x_k)\in\mathcal T\text{ with }x_{k-1}=a,\ x_k=b.
\]
Since the first two coordinates and the last two coordinates of admissible tuples are disjoint, we have
\[
|\mathcal T|\le |E(L)|\cdot|E(R)|\cdot|\mathcal C|^{k-4}.
\]
Hence, at least one of $L$ and $R$ has more than
$\frac{r-1}{r}|\mathcal C|^2$ arcs.

If $L$ has this many arcs, then, as in the previous case, either $L$ has a
loop or it contains a copy of $T_{r+1}$; in both cases $\mathscr P_1$ admits a
homomorphism to $\mathscr P$.  If $R$ has this many arcs, then the similar
argument applied to the first two coordinates of ${\rm rev}(\mathscr P)$ shows
that $\mathscr P_1$ admits a homomorphism to ${\rm rev}(\mathscr P)$.  

Applying \cref{lem:palette-certificate}, we obtain $\frac{(r-1)^2}{r^2}\in\Pi_{\rm u}^{(k)}$ for every $k\ge4$. Together with the cited $k=3$ case, this proves the assertion for all
$k\ge3$.

\smallskip
\noindent\textbf{(3) Proving $\frac{r-1}{2r}\in \Pi^{(k)}_{\rm u}$.}
Let
\[
\mathscr P_0=([r],\mathcal T_0),
\qquad
\mathcal T_0=\{(x_1,\dots,x_k)\in[r]^k:x_1<x_k\}.
\]
Then $d(\mathscr P_0)=\frac12-\frac1{2r}$.  Let $\mathscr P_1=(\mathcal C_1,\mathcal T_1)$ be the $k$-palette with $|\mathcal C_1|=r(k-1)+1$ and $\mathcal T_1=\{(c_i, c^i_1, \dots, c^i_{k-2}, c_{i+1}): i\in[r]\}$, where
\[
  c_1,\ldots,c_{r+1},
  c^1_1,\ldots,c^1_{k-2},\ldots,
  c^r_1,\ldots,c^r_{k-2}
\]
are pairwise distinct.   There is no
homomorphism from $\mathscr P_1$ to $\mathscr P_0$ or to ${\rm rev}(\mathscr P_0)$,
since such a homomorphism would give a strictly increasing, respectively
strictly decreasing, sequence of length $r+1$ in $[r]$.

Let $\mathscr P=(\mathcal C,\mathcal T)$ be a $k$-palette with
$d(\mathscr P)>\frac12-\frac1{2r}$. 
Next, we prove that there is a homomorphism from $\mathscr{P}_1$ to $\mathscr{P}$.
Define a digraph $D$ on vertex set
$\mathcal C$ by putting an arc $a\to b$ whenever some admissible $k$-tuple in
$\mathcal T$ has first coordinate $a$ and last coordinate $b$. If there is no homomorphism from $\mathscr{P}_1$ to $\mathscr{P}$, then $D$ contains no directed walk of
length $r$. 
By~\cref{lem:di-walk}, we have
\begin{align*}
 d(\mathscr{P})&\leq\frac{|\mathcal{C}|^{k-2}|E(D)|}{|\mathcal{C}|^{k}}
\leq\frac{r-1}{2r},
\end{align*}
which leads to a contradiction. 
By \cref{lem:palette-certificate}, we obtain $\frac{r-1}{2r}\in\Pi_{\rm u}^{(k)}$ for every $k\ge3$.

\smallskip
\noindent\textbf{(4) Proving $\frac{(k-1)^k}{k^k}\in \Pi^{(k)}_{\rm u}$.}
Let
\[
\mathscr P_0=([k],\mathcal T_0),
\qquad
\mathcal T_0=\{(x_1,\dots,x_k)\in[k]^k: x_i\ne i\text{ for all }i\in[k]\}.
\]
Then $d(\mathscr P_0)=\frac{(k-1)^k}{k^k}$.
Let $\mathscr{P}_1=(\mathcal{C}_1, \mathcal{T}_1)$ be a $k$-palette with one special color
$a$ and
\[
\mathcal T_1=\{A_i=(c^i_1,\ldots,c^i_{i-1},a,c^i_{i+1},\ldots,c^i_k):
  i\in[k]\},
\]
where all colors other than $a$ are pairwise distinct.

If there is a homomorphism $f:\mathcal{C}_1 \to [k]$. Let $f(a)=s\in[k]$. Then we have 
\[
(f(c^s_1),\dots,f(c^s_{s-1}),f(a),f(c^s_{s+1}), \dots,f(c^s_{k}))\in \mathcal{T}_0,
\]
which leads to a contradiction. Thus, there is no homomorphism from $\mathscr{P}_1$ to $\mathscr{P}_0$. Similarly, there is no homomorphism from $\mathscr P_1$ to
${\rm rev}(\mathscr P_0)$. Since
\[
{\rm rev}(\mathcal T_0)
=
\{(y_1,\dots,y_k)\in[k]^k:y_i\ne k+1-i\text{ for all }i\in[k]\},
\]
if $f(a)=s$, then the admissible tuple of $\mathscr P_1$ in which $a$
appears in coordinate $k+1-s$ is mapped to a tuple whose $(k+1-s)$-th
coordinate is $s$, which is forbidden in ${\rm rev}(\mathscr P_0)$.

Next, we  prove that for every $k$-palette $\mathscr{P}=(\mathcal{C}, \mathcal T)$ with $d(\mathscr{P})>\frac{(k-1)^k}{k^k}$ there exists a homomorphism from $\mathscr{P}_1$ to $\mathscr{P}$. Suppose for a contradiction that no such homomorphism exists.
For each $i\in[k]$, define 
\begin{equation*}
L_i:=\{x\in \mathcal{C}:x~ \text{is the}~ i\text{-th coordinate of some } k\text{-tuple in}~ \mathcal T\}.
\end{equation*}
We claim that $\bigcap_{i=1}^kL_i=\emptyset$. Otherwise, let $c\in \bigcap_{i=1}^k L_i$. Then $f(a)=c$ can be extended to a homomorphism from $\mathscr{P}_1$ to $\mathscr{P}$, since all other colors in $\mathcal{C}_1\setminus\{a\}$ appear only once in some $k$-tuple in $\mathcal{T}_1$. If the intersection is empty, then every color
belongs to at most $k-1$ of the sets $L_i$, so
$\sum_{i=1}^k |L_i|\le (k-1)|\mathcal C|$.  Consequently,
\begin{align*}
d(\mathscr{P}) \leq \frac{\prod_{i=1}^k |L_i|}{|\mathcal{C}|^k}\leq \frac{(\sum_{i=1}^k |L_i|)^k}{(k|\mathcal{C}|)^k}\le \frac{(k-1)^k}{k^k},
\end{align*}
which leads to a contradiction. 
By \cref{lem:palette-certificate}, we obtain $\frac{(k-1)^k}{k^k}\in\Pi_{\rm u}^{(k)}$ for every $k\ge3$.

\smallskip    
\noindent{\bf (5) Proving $\frac{4(k-2)^{k-2}}{k^k}\in \Pi^{(k)}_{\rm u}$.} 
Let $\mathcal C_0=\mathcal C'_0\dot\cup \mathcal C''_0$ with $|\mathcal C'_0|=2$ and $|\mathcal C''_0|=k-2$. Define $\mathscr P_0=(\mathcal C_0,\mathcal T_0)$ by
\[
\mathcal T_0=
\{(x_1,\dots,x_k)\in\mathcal C_0^k:
x_1,x_k\in \mathcal C'_0,\ x_2,\dots,x_{k-1}\in \mathcal C''_0\}.
\]
Then $d(\mathscr P_0)=
\frac{|\mathcal C'_0|^2|\mathcal C''_0|^{k-2}}{(|\mathcal C'_0|+|\mathcal C''_0|)^k}
=
\frac{4(k-2)^{k-2}}{k^k}$.  Moreover, $\mathscr P_0={\rm rev}(\mathscr P_0)$. For each $i\in\{2,\dots,k-1\}$, define a $k$-palette $\mathscr{P}_i=(\mathcal C_i,\mathcal T_i)$ with one special color $a_i$ and
two admissible tuples $(a_i,\alpha^i_2,\dots,\alpha^i_k)$ and $(\beta^i_1,\dots,\beta^i_{i-1},a_i,\beta^i_{i+1},\dots,\beta^i_k)$,
where all colors other than $a_i$ are pairwise distinct.

We  verify condition~\ref{palette step 1} of
\cref{lem:palette-certificate}.  For each $i$, there is no homomorphism from
$\mathscr P_i$ to $\mathscr P_0$ or to ${\rm rev}(\mathscr P_0)$, since the
special color $a_i$ appears once in an endpoint coordinate and once in a middle coordinate, while $\mathcal C'_0$ and $\mathcal C''_0$ are disjoint.  

We claim that, for any $i,j\in\{2,\dots,k-1\}$, there is a homomorphism from $\mathscr P_i$ to $\mathscr P_j^{\mathcal S}$.  Indeed, in the
symmetrization of $\mathscr P_j$, the special color $a_j$ can be placed, with
the same superscript, in any prescribed coordinate of either of the two
symmetrized admissible tuples.  This follows directly from the fact that, for
fixed $p,q\in[k]$ and fixed $\sigma\in\mathbb S_{k-1}$, there is a unique
$\tau\in\mathbb S_k$ such that $\tau(p)=q$ and $\partial_p\tau=\sigma$.
Since all other colors of $\mathscr P_i$ are new, the desired homomorphism
extends arbitrarily over them.
Consequently, $\mathscr P_i$ admits a homomorphism to
\[
\mathscr P_i\times \prod_{j\in[k]\setminus\{1,i,k\}}\mathscr P_j^{\mathcal S}
\]
by using the identity map in the $\mathscr P_i$-coordinate and the above
homomorphisms in all other coordinates.  Hence any homomorphism from this
mixed product to $\mathscr P_0$ or to ${\rm rev}(\mathscr P_0)$ would imply a
homomorphism from $\mathscr P_i$ to $\mathscr P_0$ or to
${\rm rev}(\mathscr P_0)$, a contradiction.

We verify condition~\ref{palette step 2} of
\cref{lem:palette-certificate}. Let $\mathscr{P}=(\mathcal{C},\mathcal{T})$ be a  $k$-palette with $d(\mathscr{P})>\frac{4(k-2)^{k-2}}{k^k}$.  We claim that there is a homomorphism from some $\mathscr{P}_i$ to $\mathscr{P}$ or to ${\rm rev}(\mathscr{P})$ for $i\in\{2,\dots,k-1\}$. If not, for each $j\in [k]$ let 
\[
C_j:=
\{c\in\mathcal C:\text{ there exists }(x_1,\dots,x_k)\in\mathcal T
\text{ with }x_j=c\}.
\]
Then for every
$i\in\{2,\dots,k-1\}$, $C_1\cap C_i=\emptyset$
and $C_k\cap C_i=\emptyset$.
Therefore every middle coordinate of every admissible tuple of $\mathscr P$
lies in $\mathcal C\setminus
\bigl(C_1\cup C_k\bigr)$.
Set $n=|\mathcal C|$ and $x=|C_1\cup C_k|/n$.
Then we have
\[
d(\mathscr{P})\leq\frac{\prod_{i=1}^k|C_i|}{|\cc|^k}\leq \frac{|C_1||C_k|(|\cc|-|C_1\cup C_k|)^{k-2}}{|\cc|^k}\leq x^2(1-x)^{k-2}.
\]
Since \[
\max_{0\le x\le1}x^2(1-x)^{k-2}
=\frac{4(k-2)^{k-2}}{k^k}
\]
for every $k\ge3$, we obtain $d(\mathscr{P})\le \frac{4(k-2)^{k-2}}{k^k}$, which leads to a contradiction. Hence condition~\ref{palette step 2} holds. By \cref{lem:palette-certificate}, $\frac{4(k-2)^{k-2}}{k^k}\in\Pi_{\rm u}^{(k)}$.

\smallskip
\noindent{\bf (6) Proving $\frac{4(k-2)^{k-2}}{3k^k}\in \Pi^{(k)}_{\rm u}$.}
Let $\mathcal C_0=C_{1}\dot\cup C_{2}\dot\cup C_{3}\dot\cup C_{4} $ with $|C_{1}|=| C_2|=|C_{3}|=2$ and $|C_{4}|=3(k-2)$. Thus $|\mathcal C_0|=3k$.  Define $\mathscr P_0=(\mathcal C_0,\mathcal T_0)$ by
\[
\mathcal T_0=
\{(x_1,\dots,x_k)\in\mathcal C_0^k:
x_1\in C_{1}\cup C_3,\ x_k\in C_2\cup C_3,\ x_2,\dots,x_{k-1}\in C_4 \text{ and } \{x_1, x_k\}\not\subset C_3\}.
\]
Hence, 
\[
d(\mathscr P_0)
=
\frac{\bigl(|C_1||C_2|+|C_1||C_3|+|C_3||C_2|\bigr)|C_4|^{k-2}}
{|\mathcal C_0|^k}
=
\frac{12\cdot (3(k-2))^{k-2}}{(3k)^k}
=
\frac{4(k-2)^{k-2}}{3k^k}.
\]
For each $i\in\{2,\dots,k-1\}$, define a $k$-palette $\mathscr{P}_i=(\mathcal C_i,\mathcal T_i)$ with one special color $a_i$ and
two admissible tuples $(a_i,\alpha^i_2,\dots,\alpha^i_k)$ and $(\beta^i_1,\dots,\beta^i_{i-1},a_i,\beta^i_{i+1},\dots,\beta^i_k)$,
where all colors other than $a_i$ are pairwise distinct. Define  $\mathscr{P}_k=(\mathcal C_k,\mathcal T_k)$ with two special colors $b$ and $c$ and three admissible tuples \[
(\rho_1,\dots,\rho_{k-1},b),\qquad
(b,\sigma_2,\dots,\sigma_{k-1},c),\qquad
(c,\tau_2,\dots,\tau_k),
\]
where all filler colors are pairwise distinct too. 

 As above the proof, for any
$i,j\in\{2,\dots,k-1\}$, $\mathscr P_i$ admits a homomorphism to
$\mathscr P_j^{\mathcal S}$.  Moreover, each $\mathscr P_i$ admits a
homomorphism to $\mathscr P_k^{\mathcal S}$: in the symmetrization of
$\mathscr P_k$, the color $b$ can be placed with the same superscript in an
endpoint coordinate and in any prescribed middle coordinate.
Conversely, $\mathscr P_k$ admits a homomorphism to $\mathscr P_i^{\mathcal S}$
for every $i\in\{2,\dots,k-1\}$.  Indeed, the symmetrization of $\mathscr P_i$
allows two fixed formal colors to occupy the three endpoint positions required
by the three tuples of $\mathscr P_k$; all remaining colors of $\mathscr P_k$
are new and can be mapped to the remaining coordinates of the corresponding
symmetrized tuples.
Thus each distinguished $k$-palette among $\mathscr P_2,\dots,\mathscr P_{k-1},\mathscr P_k$
admits a homomorphism to its corresponding mixed product.  Therefore any
homomorphism from such a mixed product to $\mathscr P_0$ or to
${\rm rev}(\mathscr P_0)$ would yield a homomorphism from the distinguished
palette itself to $\mathscr P_0$ or to ${\rm rev}(\mathscr P_0)$. 
For each $i\in\{2,\dots,k-1\}$, there is no homomorphism from $\mathscr P_i$
to $\mathscr P_0$ or to ${\rm rev}(\mathscr P_0)$, because the special color
$a_i$ appears once in an endpoint coordinate and once in a middle coordinate,
while in every admissible tuple of $\mathscr P_0$ and of
${\rm rev}(\mathscr P_0)$ the endpoint coordinates lie in
$C_1\cup C_2\cup C_3$ and the middle coordinates lie in the disjoint set $C_4$.
There is also no homomorphism from $\mathscr P_k$ to
$\mathscr P_0$. Indeed, the first and second $k$-tuples in $\mathcal T_k$ force the image of $b$ to lie in $(C_1\cup C_3)\cap (C_2\cup C_3)=C_3$
and the second and third $k$-tuples in $\mathcal T_k$ force the image of $c$ to lie in $C_3$, which implies that the
middle $k$-tuple has both endpoint colors in $C_3$, which is forbidden by the
definition of $\mathcal T_0$.  The same argument applies to
${\rm rev}(\mathscr P_0)$. Therefore, for each $j\in\{2,\dots,k-1\}$, there is no homomorphism from
\[
\mathscr P_j\times \prod_{j'\in[k]\setminus\{1,j\}}\mathscr P_{j'}^{\mathcal S}
\]
to $\mathscr P_0$ or to ${\rm rev}(\mathscr P_0)$.

For any $k$-palette $\mathscr{P}=(\mathcal{C},\mathcal{T})$ with $d(\mathscr{P})>\frac{4(k-2)^{k-2}}{3k^k}$, we claim that there is a homomorphism from some $\mathscr{P}_i$ to $\mathscr{P}$ or to ${\rm rev}(\mathscr{P})$ for $i\in[2,k]$. If not, 
for each $j\in [k]$ let 
\[
C'_j:=
\{c\in\mathcal C:\text{ there exists }(x_1,\dots,x_k)\in\mathcal T
\text{ with }x_j=c\}.
\]
Since there is no  homomorphism from $\mathscr{P}_i$ to $\mathscr{P}$ or to ${\rm rev}(\mathscr{P})$, we have $C'_1\cap C'_i=\emptyset$
and $C'_k\cap C'_i=\emptyset$ for each $i\in \{2, \dots, k-1\}$.
Let $X=C'_1\cap C'_k$, $Y= C'_1\setminus X$ and $Z=C'_k\setminus X$.
Since there is no  homomorphism from $\mathscr{P}_k$ to $\mathscr{P}$ or to ${\rm rev}(\mathscr{P})$, each $(c_1, \dots, c_k)$ in $\mathcal{T}$ has the property that $c_1$ and $c_k$ cannot belong to $X$ at the same time. Thus, $|\mathcal T|\le \bigl(|X||Y|+|X||Z|+|Y||Z|\bigr)
\bigl(|\mathcal C|-|X|-|Y|-|Z|\bigr)^{k-2}$. 
Note that for any reals $x, y, z \ge0$ and $x+y+z\le1$, we have
\[
(xy+xz+yz)(1-x-y-z)^{k-2}
\le
\frac{4(k-2)^{k-2}}{3k^k}.
\]
Therefore, we have
\begin{align*}
  d(\mathscr{P})\leq\frac{\bigl(|X||Y|+|X||Z|+|Y||Z|\bigr)
    	\bigl(|\mathcal C|-|X|-|Y|-|Z|\bigr)^{k-2}}{|\cc|^k}\le \frac{4(k-2)^{k-2}}{3k^k},
\end{align*}
which leads to a contradiction. By \cref{lem:palette-certificate}, $\frac{4(k-2)^{k-2}}{3k^k} \in\Pi_{\rm u}^{(k)}$.
\end{proof}

Finally, we will prove \cref{thm:non-principal family} by repeatedly applying  \cref{Thm:main theorem 1} and \cref{lem:palette-certificate}.

\begin{proof}[Proof of \cref{thm:non-principal family}] 
We treat the cases $k=3$ and $k\ge4 $ separately, since the construction for $k\ge4$
uses the two disjoint coordinate pairs $\{1,k\}$ and $\{2,k-1\}$, which
degenerate when $k=3$.  In both cases, the proof follows the same strategy:
we construct two $k$-palettes $\mathscr P_1,\mathscr P_2$ which give lower bounds
for the individual densities, and then show that any $k$-palette of density above a smaller threshold
admits a homomorphism from at least one of them.

\smallskip
\noindent\textbf{The case  1: $k=3$.}
Let $\mathscr{P}_1=([5], \mathcal{T}_1)$ with $\mathcal{T}_1=\left\{(1,2,3),(2,4,5)\right\}$ and $\mathscr{P}'=([3],\mathcal{T}')$ with $\mathcal{T}'=\left\{(x_1,2,x_3):x_1,x_3\in\{1,3\}\right\}$. Then $d(\mathscr{P}')=|\mathcal{T}'|/|\cc|^3=4/27$.

We first show that no homomorphism exists from $\mathscr{P}_1$ to $\mathscr{P}'$ or $\rev(\mathscr{P}')$. Suppose for contradiction $f:\mathscr{P}_1\to\mathscr{P}'$ is a homomorphism. Then $(f(1),f(2),f(3))\in\mathcal{T}'$ implies $f(2)=2$ and $f(1),f(3)\in\{1,3\}$. Meanwhile, $(f(2),f(4),f(5))\in\mathcal{T}'$ requires $f(2)\in\{1,3\}$, a contradiction. Since $\rev(\mathscr{P}')=\mathscr{P}'$, the same holds for $\rev(\mathscr{P}')$.

Let $\mathscr{P}=(\cc,\mathcal{T})$ be an arbitrary 3-palette with $d(\mathscr{P})>4/27$. 
For $i\in[3]$, let $A_i$ be the set of colors appearing in the $i$-th coordinate of some tuple in $\mathcal{T}$. We claim there exists a homomorphism from $\mathscr{P}_1$ to $\mathscr{P}$ or $\rev(\mathscr{P})$. Suppose not.

We first show $A_1\cap A_2=\emptyset$. If $a\in A_1\cap A_2$, there exist $(a,b,c)\in\mathcal{T}$ (as $a\in A_1$) and $(d,a,e)\in\mathcal{T}$ (as $a\in A_2$). Define $f:[5]\to\cc$ by $f(1)=d,f(2)=a,f(3)=e,f(4)=b,f(5)=c$. Then $(f(1),f(2),f(3))=(d,a,e)\in\mathcal{T}$ and $(f(2),f(4),f(5))=(a,b,c)\in\mathcal{T}$, so $f:\mathscr{P}_1\to\mathscr{P}$ is a homomorphism, a contradiction. Thus, $A_1\cap A_2=\emptyset$. Similarly, no homomorphism to $\rev(\mathscr{P})$ implies $A_3\cap A_2=\emptyset$.

Since every tuple in $\mathcal{T}$ has coordinates in $A_1,A_2,A_3$ respectively, $|\mathcal{T}|\le |A_1||A_2||A_3|$. As $A_1,A_3\subseteq\cc\setminus A_2$, we have $|A_1|,|A_3|\le|\cc|-|A_2|$. Thus,
\[
d(\mathscr{P})=\frac{|\mathcal{T}|}{|\cc|^3}\le\frac{|A_2|(|\cc|-|A_2|)^2}{|\cc|^3}.
\]
The function $x\mapsto x(n-x)^2/n^3$ attains its maximum $4/27$ at $x=n/3$, so $d(\mathscr{P})\le4/27$, a contradiction. Hence the claim holds. By \cref{lem:palette-certificate},  there exists a 3-graph $F_1$ with $\pi_1(F_1)=4/27$ that is $\mathscr{P}_1$-colorable but not $\mathscr{P}'$-colorable.

Now let $\mathscr{P}_2=([5],\mathcal{T}_2)$ with $\mathcal{T}_2=\left\{(1,2,3),(3,4,5)\right\}$ and let  $\mathscr{P}''=([2],\mathcal{T}'')$ with $\mathcal{T}''=\{(1,x,2):x\in\{1,2\}\}$. Then $d(\mathscr{P}'')=\frac{1}{4}$.

There is no homomorphism from $\mathscr{P}_2$ to $\mathscr{P}''$ or $\rev(\mathscr{P}'')$: a homomorphism to $\mathscr{P}''$ would require $f(1)=1,f(3)=2$, but then $(f(3),f(4),f(5))=(2,\cdot,\cdot)\notin\mathcal{T}''$. For $\rev(\mathscr{P}'')$, the contradiction is symmetric. 

Let $\mathscr{P}=(\cc,\mathcal{T})$ be a 3-palette with $d(\mathscr{P})>1/4$. For $i\in[3]$, let $A_i$ be as before. Then,
\[
\frac{|A_1||\cc||A_3|}{|\cc|^3}\ge d(\mathscr{P})>\frac{1}{4}.
\]
By AM-GM inequality, $|A_1|+|A_3|\ge2\sqrt{|A_1||A_3|}>|\cc|$, so $A_1\cap A_3\neq\emptyset$. Let $a\in A_1\cap A_3$, then there exist $(x_1,x_2,a)\in\mathcal{T}$ (as $a\in A_3$) and $(a,y_1,y_2)\in\mathcal{T}$ (as $a\in A_1$). Define $f:[5]\to\cc$ by $(f(1),f(2),f(3))=(x_1,x_2,a)$ and $(f(3),f(4),f(5))=(a,y_1,y_2)$. By definition, $f:\mathscr{P}_2\to\mathscr{P}$ is a homomorphism. By \cref{lem:palette-certificate},  there exists a 3-graph $F_2$  with $\pi_1(F_2)=1/4$ that is $\mathscr{P}_2$-colorable but not $\mathscr{P}''$-colorable.

Finally, we show $\pi_1(\{F_1,F_2\})=1/27<\min\{4/27,1/4\}$.
For the upper bound, let $\mathscr{P}_0=(\cc,\mathcal{T}_0)$ be a 3-palette with $d(\mathscr{P}_0)>1/27$. We claim that there exists a homomorphism from one of $\mathscr{P}_1$, $\rev(\mathscr{P}_1)$ or $\mathscr{P}_2$ to $\mathscr{P}_0$. Suppose not.  For $i\in[3]$, let $A_i$ be as before. Since there is no homomorphism from $\mathscr{P}_1$ to $\mathscr{P}_0$, $A_1\cap A_2=\emptyset$.
Since there is no homomorphism from $\rev(\mathscr{P}_1)$ to $\mathscr{P}_0$, $A_2\cap A_3=\emptyset$. Since there is no homomorphism from $\mathscr{P}_2$ to $\mathscr{P}_0$, $A_1\cap A_3=\emptyset$. Thus $A_1,A_2,A_3$ are pairwise disjoint, so $|A_1|+|A_2|+|A_3|\le|\cc|$. By AM-GM inequality,
\[
|\mathcal{T}_0|\le|A_1||A_2||A_3|\le\left(\frac{|A_1|+|A_2|+|A_3|}{3}\right)^3\le\frac{|\cc|^3}{27},
\]
implying $d(\mathscr{P}_0)\le1/27$, a contradiction. Recall that if there is a homomorphism from $\mathscr{P}$ to $\mathscr{P}'$, then every $k$-graph that is $\mathscr{P}$-colorable
is also $\mathscr{P}'$-colorable. By our claim above, either $F_1$ or $F_2$ is $\mathscr{P}_0$-colorable.  By \cref{Thm:main theorem 1}, $\pi_1(\{F_1,F_2\})\le1/27$.

For the lower bound, let $\mathscr{P}_3=([3],\mathcal{T}_3)$ with $\mathcal{T}_3=\{(1,2,3)\}$, then $d(\mathscr{P}_3)=1/27$. Trivially, there is a homomorphism from $\mathscr{P}_3$ to $\mathscr{P}'$ and a homomorphism from $\mathscr{P}_3$ to $\mathscr{P}''$. Hence, $F_1$ and $F_2$ are not $\mathscr{P}_3$-colorable (as $F_1$, $F_2$ is not $\mathscr{P}',\mathscr{P}''$-colorable respectively),  implying $\pi_1(\{F_1,F_2\})\ge d(\mathscr{P}_3)=1/27$.

Combining bounds, $\pi_1(\{F_1,F_2\})=1/27$, completing the case $k=3$.

\smallskip
\noindent\textbf{The case 2: $k\ge 4$.}
Let $\mathscr{P}_1=([2k-1],\mathcal{T}_1)$ with $\mathcal{T}_1=\left\{(1,2,\dots,k),(k,k+1,\dots,2k-1)\right\}$. Let $\mathscr{P}'=([2],\mathcal{T}')$ with $\mathcal{T}'=\{(1,x_2,x_3,\dots,x_{k-1},2): x_i\in\{1,2\}\ \text{for}\ i\in[2,k-1]\}$. Then  $d(\mathscr{P}')=2^{k-2}/2^k=1/4$.

There is no homomorphism  from $\mathscr{P}_1$ to $\mathscr{P}'$ or $\rev(\mathscr{P}')$: a homomorphism to $\mathscr{P}'$ would require $f(1)=1,f(k)=2$, but then $(f(k),\dots,f(2k-1))=(2,\dots)\notin\mathcal{T}'$. For $\rev(\mathscr{P}')$, the contradiction is symmetric. 

Let $\mathscr{P}=(\cc,\mathcal{T})$ be a $k$-palette with $d(\mathscr{P})>1/4$. For $i\in\{1,k\}$, let $A_i$ be the set of colors appearing in the $i$-th coordinate of some tuple in $\mathcal{T}$. Then,
\[
\frac{|A_1||\cc|^{k-2}|A_k|}{|\cc|^k}\ge d(\mathscr{P})>\frac{1}{4}.
\]
By AM-GM inequality, $A_1\cap A_k\neq\emptyset$. Let $a\in A_1\cap A_k$, then there exist $(x_1,\dots,x_{k-1},a)\in\mathcal{T}$ (as $a\in A_k$) and $(a,y_1,\dots,y_{k-1})\in\mathcal{T}$ (as $a\in A_1$). Define $f:[2k-1]\to\cc$ by $(f(1),\dots,f(k))=(x_1,\dots,x_{k-1},a)$ and $(f(k),\dots,f(2k-1))=(a,y_1,\dots,y_{k-1})$. $f$ is a homomorphism from $\mathscr{P}_1$ to $\mathscr{P}$. By \cref{lem:palette-certificate}, there exists a $k$-graph $F_1$ with  $\pi_{k-2}(F_1)=1/4$ that is $\mathscr{P}_1$-colorable but not $\mathscr{P}'$-colorable.

Now let $\mathscr{P}_2=([2k-1],\mathcal{T}_2)$ with $\mathcal{T}_2=\left\{(1,2,\dots,k),(k+1,k-1,k+2,k+3,\dots,2k-1)\right\}$ (the second tuple has length $k$, with second coordinate $k-1$). Let $\mathscr{P}''=([2],\mathcal{T}'')$ with $\mathcal{T}''=\{(x_1,1,x_3,\dots,x_{k-2},2,x_k):x_i\in\{1,2\}\ \text{for}\ i\in[k]\setminus\{2,k-1\}\}$. Then  $d(\mathscr{P}'')=2^{k-2}/2^k=1/4$.

Similarly, no homomorphism exists from $\mathscr{P}_2$ to $\mathscr{P}''$ or $\rev(\mathscr{P}'')$: a homomorphism to $\mathscr{P}''$ would require $f(k-1)=2$ (from the first tuple in $\mathcal{T}_2$) and $f(k-1)=1$ (from the second tuple in $\mathcal{T}_2$), a contradiction. Let $\mathscr{P}=(\cc,\mathcal{T})$ be a $k$-palette with $d(\mathscr{P})>1/4$. For $i\in\{2,k-1\}$, let $A_i$ be the set of colors appearing in the $i$-th coordinate of some tuple in $\mathcal{T}$.
By the same argument as for $F_1$, we have $A_2\cap A_{k-1}\neq \emptyset$, and there exists a homomorphism from $\mathscr{P}_2$ to $\mathscr{P}$. By \cref{lem:palette-certificate}, there exists a $k$-graph $F_2$ with $\pi_{k-2}(F_2)=1/4$ that is $\mathscr{P}_2$-colorable but not $\mathscr{P}''$-colorable.

Finally, we show $\pi_{k-2}(\{F_1,F_2\})=1/16<1/4$.

For the upper bound, let $\mathscr{P}_0=(\cc,\mathcal{T}_0)$ be a $k$-palette with $d(\mathscr{P}_0)>1/16$. For $i\in\{1,2,k-1,k\}$, let $A_i$ be the set of colors appearing in the $i$-th coordinate of some tuple in $\mathcal{T}_0$. Then,
\[
\frac{|A_1||A_2||\cc|^{k-4}|A_{k-1}||A_k|}{|\cc|^k}\ge d(\mathscr{P}_0)>\frac{1}{16}.
\]
Thus either $|A_1||A_k|/|\cc|^2>1/4$ or $|A_2||A_{k-1}|/|\cc|^2>1/4$ (otherwise their product is at most $1/16$). If the former holds, $A_1\cap A_{k}\neq\emptyset$ by AM-GM inequality and there is a homomorphism $f:\mathscr{P}_1\to\mathscr{P}_0$ (as before), which implies $F_1$ is $\mathscr{P}_0$-colorable. If the latter holds, $A_2\cap A_{k-1}\neq\emptyset$ by AM-GM inequality  and there is a homomorphism $f:\mathscr{P}_2\to\mathscr{P}_0$, which implies $F_2$ is $\mathscr{P}_0$-colorable. By \cref{Thm:main theorem 1}, $\pi_{k-2}(\{F_1,F_2\})\le1/16$.
For the lower bound, let $\mathscr{P}_3=([2],\mathcal{T}_3)$ with $\mathcal{T}_3=\{(1,1,x_3,\dots,x_{k-2},2,2):x_i\in\{1,2\}\ \text{for}\ i\in[3,k-2]\}$, where the first two coordinates are fixed to 1, the last two coordinates are fixed to 2, and the middle $k-4$ coordinates are arbitrary element in $[2]$. Then $|\mathcal{T}_3|=2^{k-4}$, so $d(\mathscr{P}_3)=2^{k-4}/2^k=1/16$. The identity map is a homomorphism $\mathscr{P}_3\to\mathscr{P}'$  and a homomorphism $\mathscr{P}_3\to\mathscr{P}''$. Then neither $F_1$ nor $F_2$ is $\mathscr{P}_3$-colorable.  Hence  $\pi_{k-2}(\{F_1,F_2\})\ge1/16$.

Combining bounds, $\pi_{k-2}(\{F_1,F_2\})=1/16$, completing the case $k\ge4$.
\end{proof}

\section{Concluding remarks}\label{sec:concluding-remarks}

In this paper, we developed a palette framework for the $(k-2)$-uniform Tur\'an density of $k$-graphs and used it to obtain new values as well as non-principal families.  We close with \emph{other levels of uniform density} in which the present approach may be extended.

The density notion studied in this paper is the top non-trivial member of a larger hierarchy introduced by Reiher, R\"odl and Schacht~\cite{RRS-Mantel}.  It is natural to ask whether the palette viewpoint also captures the other levels of this hierarchy.  We recall the general definition here.

\begin{definition}\label{def:j-dense}
Given integers $n\ge k\ge 2$ and $0<j<k$, let $d\in [0,1]$, $\mu>0$, and let $H=(V,E)$ be a $k$-graph with $|V|=n$.  We say that $H$ is \emph{$(d,\mu,j)$-dense} if
\begin{equation}\label{eq:general-j-dense}
\left|\mathcal K_k(G^{(j)})\cap E\right|
\ge
 d\left|\mathcal K_k(G^{(j)})\right|-\mu n^k
\end{equation}
holds for every $j$-graph $G^{(j)}$ with vertex set $V$.
\end{definition}

For a family $\mathcal F$ of $k$-graphs, define
\[
\begin{split}
\pi_j(\mathcal F)=\sup\{d\in[0,1]:&\ \text{for every }\mu>0\text{ and every }n_0\in\mathbb N,\text{ there exists an }\mathcal F\text{-free}\ \\
&\quad (d,\mu,j)\text{-dense }k\text{-graph }H\text{ with }|V(H)|\ge n_0\}.
\end{split}
\]
Thus $\pi_{k-2}$ is the parameter considered throughout the paper.  The case $j=1$ for $k=3$ is the original uniform Tur\'an density, while $j=0$ would correspond to the classic Tur\'an density.

Our $k$-palettes color $(k-1)$-sets and decide whether a $k$-set is an edge from the ordered pattern of its $(k-1)$-shadow.  This is exactly the right level for $(k-2)$-uniform density.  For smaller $j$, however, one expects several layers of colors to be relevant.  A natural candidate is the following multi-level palette.

\begin{definition}[$j$-level palettes]\label{def:j-level-palette}
Fix integers $k\ge 3$ and $1\le j\le k-2$.  A \emph{$(k,j)$-palette} is a tuple
\[
\mathfrak P=(\mathcal C_{j+1},\mathcal C_{j+2},\dots,\mathcal C_{k-1},\mathcal T),
\]
where each $\mathcal C_i$ is a finite set of colors and
\[
\mathcal T\subseteq \prod_{i=j+1}^{k-1}\mathcal C_i^{\binom{k}{i}}
\]
is a set of admissible ordered multi-level patterns.  We fix once and for all a lexicographic ordering of each $\binom{[k]}{i}$, so that the above product is unambiguous.  The density of $\mathfrak P$ is
\[
d(\mathfrak P):=
\frac{|\mathcal T|}{\prod_{i=j+1}^{k-1}|\mathcal C_i|^{\binom{k}{i}}}.
\]
An ordered $k$-graph $F^\prec$ is \emph{$\mathfrak P$-colorable} if, for every $i\in\{j+1,\dots,k-1\}$, there is a coloring
\[
\varphi_i:\binom{V(F)}{i}\to\mathcal C_i
\]
such that for every edge $e=\llbracket v_1,\dots,v_k\rrbracket\in E(F)$, the pattern
\[
\left(
\left(\varphi_i(\{v_s:s\in S\})\right)_{S\in\binom{[k]}{i}}
\right)_{i=j+1}^{k-1}
\]
belongs to $\mathcal T$.
\end{definition}

When $j=k-2$, this definition reduces to the $k$-palettes used in this paper, up to the harmless convention used to order the $k$ many $(k-1)$-subsets of a $k$-set.  For $j<k-2$, the additional color layers reflect the additional layers present in the hypergraph regularity lemma.  This phenomenon is closely related to the obstruction emphasized by Lamaison for uniform Tur\'an density beyond $3$-graphs: pair-color palettes alone are not expected to capture all higher-uniformity behavior, and one is naturally led to palettes that also color triples, quadruples, and so on up to the top proper faces.

Given a family $\mathcal F$ of $k$-graphs, one may define the corresponding palette density by
\[
\pi^{\mathrm{pal}}_{k,j}(\mathcal F)
:=
\sup\{d(\mathfrak P):\mathfrak P\text{ is a }(k,j)\text{-palette and no }F\in\mathcal F\text{ is }\mathfrak P\text{-colorable}\}.
\]
The following problem asks whether our main theorem is the top-level case of a more general principle.

\begin{problem}\label{prob:j-palette-framework}
For which integers $k\ge3$ and $1\le j\le k-2$, and for which families $\mathcal F$ of $k$-graphs, does one have
\[
\pi_j(\mathcal F)=\pi^{\mathrm{pal}}_{k,j}(\mathcal F)?
\]
If the equality fails in this form, what additional structure must be built into the definition of a $(k,j)$-palette?
\end{problem}

The lower-bound direction is suggested by the usual random construction: independently color all $i$-sets with colors from $\mathcal C_i$ for $i=j+1,\dots,k-1$, and declare a $k$-set to be an edge precisely when its multi-level color pattern is admissible.  The main difficulty is the reverse direction.  In the case $j=k-2$, only the top layer of the regular partition is active, and the reduced structure can be contracted to an ordinary $k$-palette.  For $j<k-2$, several layers interact, and any analogue of the contraction step must preserve the correlations between colors on different levels.  This seems to be the principal new obstacle.

\bibliographystyle{abbrv}
\bibliography{ref}
\end{document}